\documentclass{article}

\author{author}
\date{\today}
\title{title}

\usepackage[utf8]{inputenc}
\usepackage[T1]{fontenc}
\usepackage[english]{babel}
\usepackage{amsmath,amssymb}
\usepackage{bbm}
\usepackage{amsthm}
\usepackage{amssymb} % mathematical fonts (in amsfonts)
\usepackage[mathscr]{eucal} % load euler fonts (in amsfonts)
\usepackage[matrix,arrow,tips,curve,ps]{xy}
\usepackage{amscd}
\usepackage{xcolor}
\usepackage{listings}
\usepackage{tikz}
\usepackage{tikz-cd}
\usepackage{mathrsfs}
\usepackage{floatflt,epsfig}
\usepackage{comment}
\usepackage{hhline}
\usetikzlibrary{backgrounds}
\usepackage{enumitem}
\setlength{\arrayrulewidth}{.6pt}
\usepackage[a4paper,top=3cm,bottom=3cm,left=3.5cm,right=3.5cm,%
bindingoffset=5mm]{geometry}
\usepackage{lipsum}       % for sample text
\usepackage{changepage}   % for the adjustwidth environment

%%%%%%% PACKAGE ADDED %%%%%%%%%%
\usepackage{url}
\usepackage{hyperref}

\usepackage{multirow}
\usepackage{arydshln}

\setlength\parindent{5pt}

\theoremstyle{plain} 
\newtheorem{thm}{Theorem}[section] 
\newtheorem*{thm*}{Theorem}
\newtheorem{cor}[thm]{Corollary} 
 
\newtheorem{prop}[thm]{Proposition} 
\theoremstyle{definition} 
\newtheorem{defn}{Definition}
\newtheorem*{qst*}{Question}
\newtheorem*{prop*}{Proposition}
\newtheorem*{cor*}{Corollary}
\theoremstyle{remark} 
\newtheorem{rmk}{Remark} 
\theoremstyle{remark} 
\newtheorem{claim}{Claim}

\newcommand{\D}[1]{{\mathbb#1}}% Doubled -Blackboard bold

\newcommand{\C}{{\mathcal{C}}}

\newcommand{\E}{{\mathcal{E}}}
\newcommand{\R}{{\mathcal{R}}}
\newcommand{\N}{{\mathcal{N}}}

\newcommand{\pres}{\phi_{-K_{\tilde{S}}}}
\setlength\parindent{0pt}
% \scriptscriptstyle

\newcommand*{\sheafhom}{\mathscr{H}\kern -.5pt om}
\newcommand*{\sheafext}{\mathscr{E}\kern -.5pt xt}
\newcommand{\F}{{\mathcal{F}}}

\setlength\parindent{0pt}

\newcommand{\mapstoo}[1]{\:
	\xymatrix@1{\ar@{|->} [r] ^-{#1}&}\:}
\newlist{myenumerate}{enumerate}{1}
\setlist[myenumerate,1]{label=(\roman*)}

\newcommand{\DMN}[1]{\textcolor{black}{#1}}%Dimitri's edits
%%%%%%%%%%%%%%%%%%%%%%%%%%%%%%%%%%%%%%%%%%%%%%%%
%%%%%%%%% COMMANDS ADDED %%%%%%%%%%%%%%%%%%%%%%%%%%%
%%%%%%%%%%%%%%%%%%%%%%%%%%%%%%%%%%%%%%%%%%%%%%%%

\newcommand{\DM}[1]{\textcolor{black}{#1}}%Dimitri's edits
% Additional Dimitri's edits
%Dimitri's comments

\renewcommand{\restriction}{|}

\renewcommand\emptyset{\varnothing}

\newcommand{\Bl}{\textup{Bl}}
\newcommand{\codim}{\textup{codim\,}}
\newcommand{\coker}{\textup{coker\,}}
\newcommand{\Coh}{\textup{Coh}}
\newcommand{\depth}{\textup{depth\,}}
\newcommand{\diag}{\textup{diag}}
\newcommand{\Ext}{\textup{Ext}}

\newcommand{\Hilb}{\textup{Hilb\,}}
\newcommand{\Hom}{\textup{Hom\,}}

\newcommand{\im}{\textup{im\,}}
\newcommand{\pdim}{\textup{pd\,}}
\newcommand{\Pf}{\textup{Pf\,}}
\newcommand{\Pic}{\textup{Pic}}
\newcommand{\PProj}{\textup{Proj}}
\newcommand{\rk}{\textup{rk\,}}
\newcommand{\Sing}{\textup{Sing\,}}
\newcommand{\Supp}{\textup{Supp\,}}

\newcommand{\mult}{\textup{mult\,}}
\newcommand{\Sym}{\textup{Sym\,}}

\newcommand{\bE}{\textsc{E}}

%%%%%%%%%%%%%%%%%%%%%%%%%%%%%%%%%%%%%%%%%%%%%%%%
%%%%%%%%% COMMANDS ADDED %%%%%%%%%%%%%%%%%%%%%%%%%%%
%%%%%%%%%%%%%%%%%%%%%%%%%%%%%%%%%%%%%%%%%%%%%%%%

\begin{document}

	\title{\textbf{Pfaffian representations of cubic threefolds}}
	\author{Gaia Comaschi}
	\date{}
\maketitle

\begin{abstract}
	\noindent
Given a cubic hypersurface $X\subset \D{P}^4$, we study the existence of Pfaffian representations of $X$, namely of $6\times 6$ skew-symmetric matrices of linear forms $M$ such that $X$ is defined by the equation $\Pf(M)=0$. It was known that such a matrix always exists whenever $X$ is smooth. Here we prove that the same holds whenever $X$ is singular, hence that every cubic threefold is Pfaffian.
\begin{comment}
 using Segre's classification of 3-dimensional 
It is known that smooth cubic threefolds  It is known that if $X$ is smooth then it is Pfaffian; here, with the help oh Segre's classification of 3-dimensional cubics, we prove that the same holds  threefold is Pfaffian; using Segre's classification of 3-dimensional cubic, we prove that the result holds for every 
\end{comment}
\end{abstract}
\section*{Introduction}
Let $X\subset \D{P}^n$ be a degree $d$ hypersurface defined by a polynomial $F\in \D{C}[X_0,\dots, X_n]$.
It is a classical problem in algebraic geometry to determine whether it is possible to write a power $F^r$ of $F$ %a homogeneus polynomial in $n+1$ variables $F(X_0,\ldots X_n)$, 
as the determinant of a matrix $M$ of homogeneous forms. 
For $r=1$, such a matrix $M$ is usually referred to as a \textit{determinantal representation} of $X$; if moreover $M$ has linear entries it is called a \textit{linear} determinantal representation of $X$. 

%If ever this happens we say that $F$ admits a \textit{linear determinantal representation}.%, and if moreover we have $r=1$ we say that $F$ admits a \textit{linear determinantal equation}. 

The existence of determinantal representations of cubic hypersurfaces, in particular of dimension less than or equal to 2, % plane cubics and of cubic surfaces 
is an issue that had already been approached in the 18th century in works by Grassmann (\cite{Grass}). 
It is known that plane cubics and cubic surfaces generally admit linear determinantal equations, whilst they always admit \textit{Pfaffian representations},
that is to say, they can always  be expressed as the zero locus of the Pfaffian of a skew-symmetric matrix of linear forms
(see eg. \cite{Beauville}, \cite{Fabbio}, \cite{CAG}, \cite{FM}).
\begin{comment}
It is known that if $X$ is a general plane cubic or cubic surface, then it admits a \textit{linear} determinantal equations, namely $X$ can by defined by an equation $\det(M)=0$ for a $3\times 3$ matrix of linear fomrs $M$, %the polynomials defining them can be expressed as the determinant of a $3\times 3$ matrix of linear forms, 
whilst $X$ always admits a \textit{Pfaffian representation}, It is known that if $X$ is a general plane cubic or cubic surface, then it admits a \textit{linear} determinantal equations, namely $X$ can by defined by an equation $\det(M)=0$ for a $3\times 3$ matrix of linear fomrs $M$, %the polynomials defining them can be expressed as the determinant of a $3\times 3$ matrix of linear forms, 
whilst $X$ always admits a \textit{Pfaffian representation}, that is $X$ can be expressed as the zero locus of the Pfaffian of a  skew-symmetric matrix of linear forms
(see eg. \cite{Beauville}, \cite{Fabbio}, \cite{CAG}, \cite{FM}).
\end{comment}

The first results concerning the Pfaffian representability of  cubic threefolds were obtained by Adler and Ramanan \cite{AR} who proved that a general cubic threefold is Pfaffian. Later Beauville (\cite{B1},\cite{Beauville}), applying results obtained by Markushevich and Tikhomirov \cite{Dima-Tikho}, showed that the same holds for every smooth cubic threefold. In loc. cit. Markushevich and Tikhomirov studied indeed \textit{instanton bundles of minimal charge}, that are stable rank 2 vector bundles with Chern classes $c_1=0, \ c_2=2$, on a smooth cubic threefold $X$. 
Beauville observed then that twisting an instanton $\F$ we get a rank 2 (locally free) skew-symmetric Ulrich sheaf, an ACM sheaf whose minimal free resolution in $\D{P}^4$ is a ``linear'' complex of length one of the form:
$$0\longrightarrow \mathcal{O}^{\oplus 6}_{\D{P}^4}(-1)\xrightarrow{\varphi_1} \mathcal{O}^{\oplus 6}_{\D{P}^4}\longrightarrow \F(1)\longrightarrow 0;$$
accordingly the map $\varphi_1$ (the only \DM{non-vanishing} differential in the resolution of $\F(1)$) provides a Pfaffian representation of $X$. 
This article is devoted to the study of Pfaffian representations of \textit{singular} 3-dimensional cubics.
%Here we will be then concerned with singular 3-dimensional cubic. 
%We study Pfaffian representations of a cubic hypersurface $X$ of $\D{P}^4$; namely we try to determine when the polynomial $F$ defining $X$ can be written as the Pfaffian of a $6\times 6$ matrix of skew-symmetric forms.
%It's known that a smooth cubic threefold is Pfaffian \DM{\cite{Beauville}}, we will then be concerned with the singular ones.
We prove the following:
\begin{thm*}[Theorem \ref{teoremone_Pf}] A cubic threefold $X\subset \D{P}^4$ always admits a Pfaffian representation.
\end{thm*}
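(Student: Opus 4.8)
The plan is to extend Beauville's sheaf-theoretic dictionary to singular $X$. A Pfaffian representation of $X=V(F)$ is the same datum as a self-dual linear complex $0\to\OOO^{\oplus 6}(-1)\xrightarrow{\,M\,}\OOO^{\oplus 6}\to\F\to 0$ with $M=-M^{T}$ and $\Pf(M)=F$ (so $\det M=F^{2}$); when $X$ is reduced, $\F$ is precisely a rank $2$ Ulrich sheaf on $X$ equipped with a symplectic self-duality, which is what symmetrizes the differential. Conversely, given such an $\F$ one recovers $M$ from its minimal $\OOO$-resolution. On a smooth cubic threefold $\F=\E(1)$ for $\E$ a minimal instanton bundle, whence Beauville's theorem; so it suffices to produce a rank $2$ symplectic Ulrich sheaf on every reduced singular cubic threefold, the non-reduced cubics (such as $L^{3}$, which is the Pfaffian of $L$ times a constant symplectic matrix, or $L^{2}L'$) being disposed of by direct inspection.

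The first reduction is a specialization argument. Let $\mathcal{X}\to\D{P}^{34}=\D{P}(H^{0}(\OOO(3)))$ be the universal cubic threefold and $\mathcal{M}\to\D{P}^{34}$ the relative moduli space of Gieseker-semistable rank $2$ sheaves with the Hilbert polynomial of a twisted minimal instanton. This morphism is projective; it is dominant because over the open locus of smooth cubics the fibre is the nonempty moduli of minimal instantons; hence, restricting to the closure of the instanton locus — an irreducible closed subscheme of $\mathcal{M}$, projective over $\D{P}^{34}$ and dominating it — the induced morphism is surjective. Thus every cubic threefold carries a semistable sheaf with the correct Chern character. What remains is to upgrade ``semistable with the right invariants'' to ``Ulrich and symplectic'': the Ulrich vanishings $H^{\bullet}(\F(-k))=0$ are open along $\mathcal{M}$, so they may fail exactly over a proper closed locus of singular cubics, and the self-duality must still be arranged.

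These exceptional cubics are handled via Segre's classification, stratum by stratum. For $X$ with at worst isolated singularities one shows an honest rank $2$ Ulrich \emph{bundle} persists: either the Fano scheme of lines and the instanton construction survive, or one exhibits an elliptic normal quintic $C\subset X$ with $C\cap\Sing X=\emptyset$ and applies the Hartshorne--Serre correspondence on $X$; the output is automatically symplectic since $\det\F(-1)=\mathcal{O}_{X}$, and the Ulrich cohomology is checked directly. For $X$ with one-dimensional singular locus, for the cones, and for the reducible cubics — a short explicit list in Segre's classification — one writes $M$ down by hand: a cone over a Pfaffian cubic surface inherits its representation; a quadric-plus-plane or a triple of planes is the Pfaffian of an explicit block skew-symmetric matrix; the remaining degenerate types similarly, each time verifying by a rank computation on $M$ that $\Pf(M)$ is a nonzero scalar multiple of $F$.

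The main obstacle is precisely this final analysis. For the most degenerate $X$ the limiting Ulrich sheaf genuinely degenerates (to a non-locally-free, or even non-pure, semistable sheaf) and the Hartshorne--Serre input — an elliptic quintic in sufficiently general position — is simply unavailable, forcing explicit matrices; the danger is then that the naive candidate $M$ has $\Pf(M)\equiv 0$, or that $\Pf(M)$ defines the wrong cubic, both of which amount to controlling the corank of $M$ along $\Sing X$, i.e. the rank and support of $\coker M$. The crux is to organize Segre's degenerate list so that it is exhausted by a few uniform constructions rather than an unbounded case split, with the cones over singular cubic surfaces and the reducible cubics demanding the most care, since there the instanton-type construction is gone and the symplectic self-duality of the candidate sheaf has to be built in by hand.
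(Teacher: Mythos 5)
Your overall framework agrees with the paper's: reduce a Pfaffian representation to a rank~2 skew-symmetric Ulrich sheaf, stratify by Segre's classification, use the Hartshorne--Serre correspondence from an elliptic normal quintic where possible, and write explicit matrices for the most degenerate cubics. The preliminary specialization argument through the relative moduli space is not in the paper, and as you yourself concede it proves only that a semistable limit sheaf with the right invariants exists -- not that it is Ulrich, locally free, or skew-symmetric -- so it does no work in the end; every case still has to be settled by hand, and the argument could be deleted without loss.

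The genuine gap is in how you dispose of the normal cubics with one-dimensional singular locus (singular along a line or conic of first or second type, along three concurrent lines, or along a rational normal quartic -- the secant cubic). You assert that for these ``the Hartshorne--Serre input --- an elliptic quintic in sufficiently general position --- is simply unavailable, forcing explicit matrices,'' and then do not produce those matrices. This is both unexecuted and based on a misdiagnosis: the paper's central technical content is precisely the proof that every normal non-cone cubic threefold, including all of these, \emph{does} carry a normal AG elliptic quintic. For curves of first type this is done by passing to a minimal resolution $\tilde S\to S$ of a general hyperplane section (a weak del Pezzo surface with $rA_1$ singularities), exhibiting a base-point-free class $|R-K_{\tilde S}+2E|$ whose general member maps to a smooth quintic avoiding $\Sing(S)$, and deforming it out of the hyperplane; for second-type curves and the secant cubic it is done constructively, by building a reducible rational quartic $C_4=C_2\cup l\cup l'$, spanning a cubic scroll $\Sigma\supset C_4$ with $C_4\sim D+3F$, and taking the residual curve $\C\sim -K_\Sigma$ in $X\cap\Sigma$, which is AG by the anticanonical-divisor criterion on ACM surfaces. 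One must then also verify that the Serre-correspondence sheaf is locally free (via the criterion that $1\in\Ext^1(\mathcal I_\C,\mathcal O_X(-2))$ generates the stalk $\Ext^1(\mathcal I_{\C,x},\mathcal O_{X,x})$ at every $x\in\C$, using $\pdim(\mathcal I_{\C,x})=1$); only then does the symplectic structure $\E\simeq\E^\vee\otimes\bigwedge^2\E$ come for free. Without this construction, your proof covers the smooth and isolated-singularity cases, the cones, and the non-normal cubics, but leaves the entire family of normal cubics singular along a curve to an assertion.
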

To prove the theorem we will distinguish the following families of singular threefolds:
\begin{itemize}
	\item Normal cubic threefolds that present at most double points.
	\item \DM{Non-normal} cubic threefolds.
	\item Cubic threefolds that are cones.
\end{itemize}
Depending on the type of singularities of $X$ we will adopt a different approach %method 
to show that $X$ is Pfaffian.
Proving that cones and non-normal cubics are Pfaffian reveals to be quite easy; 
%As every cubic hypersurface of dimension \DM{two or less} is Pfaffian, we deduce immediately that the same holds whenever $X$ is a cone.
%If $X$ is not normal (and \DM{then it is} singular along a linear space of dimension at least 2), it is easy to write explicitly a skew-symmetric matrix of linear forms $M_X$ such that $\DM{\Pf}(M_X)=F$.
things get more interesting when $X$ is singular in codimension at most 2 and presents just double points.
In this case our strategy essentially consists in adapting the techniques used in \cite{Dima-Tikho} to prove the existence of instanton (or equivalently Ulrich) bundles. 
\begin{comment}
in this case our investigation relies on the proof that $X$ always carries a rank 2 skew-symmetric Ulrich sheaf $\E$. Indeed if such a sheaf $\E$ on $X$ exists, its minimal free resolution in $\D{P}^4$ is a ``linear'' complex of length one:
$$0\longrightarrow \mathcal{O}^{\oplus 6}_{\D{P}^4}(-1)\xrightarrow{\varphi_1} \mathcal{O}^{\oplus 6}_{\D{P}^4}\longrightarrow \E\longrightarrow 0;$$
the map $\varphi_1$ (the only \DM{non-vanishing} differential in the resolution of $\E$) provides then a Pfaffian representation of $X$.
\end{comment}
\

Here is the plan of the paper.
In section 1 we introduce some preliminary material about ACM and Ulrich sheaves,  ACM and AG schemes;
we recall in particular how linear determinantal and Pfaffian representations of a hypersurface $X\subset \D{P}^n$ relate to Ulrich sheaves supported on it.

In section 2 we introduce the so called \textit{Serre correspondence}, one of the most efficient machinery to prove the existence of ACM sheaves.
This technique allows indeed to obtain ACM sheaves on a scheme $X$ starting from codimension 2 AG subschemes of $X$. 
We show that when $X$ is a cubic threefold, the ACM sheaf $\E$ constructed from a \textit{normal AG elliptic quintic} $\C\subset X$ (that is a non-degenerate AG curve $\C\subset X$ with Hilbert's polynomial $P_{\C}(m)=5m$ and such that $H^0(\mathcal{O}_{\C})\simeq \D{C}$) applying Serre correspondence is Ulrich. 

The central part of the article is therefore devoted to the study of normal AG elliptic quintics on a normal cubic threefold $X$.
We provide two methods to show the existence of these curves, both based on the ones applied \DM{in \cite{Dima-Tikho}} to prove that a smooth $X$ always contains a nonsingular normal elliptic quintic.
Since in order to apply both methods we need to figure out which kind of cubic surfaces arise as general hyperplane sections of $X$, it is necessary to  
understand in detail the singularities that $X$ might present. This is done with the help of Segre's classification of cubic threefolds \cite{Segre}, a brief resume of its work is presented in section 3. 

The first method is displayed in section 4. It substantially relies on a deformation argument and allows to prove the existence of a \textit{smooth} normal elliptic quintic that moreover does not pass through any of the singular points of $X$. To start with we show that a general hyperplane section $S$ of $X$ contains a smooth quintic elliptic curve $\C_0$ disjoint from the singular locus of $S$. Then we \DM{prove} that \DM{$\C_0$} deforms to a \DM{nondegenerate} \DM{curve} $\C$ that is still contained in $X_{sm}$, the smooth locus of $X$.
The arguments adopted in the smooth case apply, almost unchanged, whenever $X$ has isolated singularities: a general hyperplane section of $X$ is indeed a cubic surface that still has no singular points; the proof of the existence of a degenerate smooth elliptic quintic is thus equivalent. 
When $\Sing(X)$, the singular locus of $X$, has dimension one, a general hyperplane section $S$ of $X$ is a cubic surface with isolated singularities. In \DM{this case} we will consider 
$$\phi: \tilde{S}\to S,$$
a minimal resolution of singularities of $S$ (the surface $\tilde{S}$ is a so called \textit{weak Del Pezzo surface}) and we will prove the existence of a smooth curve $\tilde{\C}_0$ on $\tilde{S}$ such that \mbox{$\C_0:=\phi(\tilde{\C}_0)\subset \tilde{S}$} is a smooth elliptic quintic disjoint from $\DM{\Sing}(S)$.
%We will finally show that the Ulrich sheaf constructed by $\C$ is locally free, this will ensure that it is skew-symmetric.  

In section 5 we exhibit the second method, a constructive one, to produce elliptic quintics.  With this approach we obtain directly a \DM{nondegenerate}, but possibly singular, AG elliptic quintic $\C\subset X$.
To apply it we first need to show the existence of a (not necessarily irreducible) rational quartic $C_4\subset X$. Then we will present \DM{a construction of} a cubic scroll $\Sigma$ containing $C_4$ \DM{such} that the curve $\C$, residual to $C_4$ in $\Sigma\cap X$,
is an AG elliptic quintic. 
These results lead us to conclude the following:
\begin{thm*}[Theorem \ref{normal_quintic}] Let $X$ be a normal cubic threefold that is not a cone. Then there exists a normal AG elliptic quintic curve $\C\subset X$.
\end{thm*}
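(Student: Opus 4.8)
The plan is to prove Theorem \ref{normal_quintic} by splitting into cases according to the dimension of $\Sing(X)$ and invoking the two methods sketched in the introduction (Sections 4 and 5). First, if $X$ is smooth or has only isolated singularities, a general hyperplane section $S = X \cap H$ is a cubic surface with at most finitely many singular points; by the deformation method of Section 4, $S$ contains a smooth elliptic quintic $\C_0$ disjoint from $\Sing(S)$, and $\C_0$ deforms inside $X$ to a nondegenerate curve $\C$ contained in $X_{sm}$. One then has to check that the resulting $\C$ is AG of the right numerical type --- i.e.\ that it is arithmetically Gorenstein with $P_{\C}(m) = 5m$ and $H^0(\mathcal O_\C) \simeq \D C$ --- and that it is normal; these follow from the fact that a smooth, nondegenerate, linearly normal elliptic curve of degree $5$ in $\D P^4$ is projectively normal (hence ACM) and, having a self-dual resolution, is AG, together with the fact that a smooth curve is automatically normal.

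When $\dim \Sing(X) = 1$, a general hyperplane section $S$ is a cubic surface with isolated singularities, so $X$ is not smooth along a curve but $S$ is mildly singular. Here the plan is to pass to the minimal resolution $\phi\colon \tilde S \to S$, which is a weak Del Pezzo surface, use the geometry of weak Del Pezzo surfaces (anticanonical model, $(-1)$- and $(-2)$-curves, the $E_6$-lattice structure on $\Pic \tilde S$) to produce a smooth curve $\tilde\C_0 \subset \tilde S$ with $\tilde\C_0{}^2 = 5$, $\tilde\C_0 \cdot K_{\tilde S} = -5$, avoiding the exceptional $(-2)$-curves, so that $\C_0 := \phi(\tilde\C_0)$ is a smooth elliptic quintic in $S$ disjoint from $\Sing(S)$. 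One then deforms $\C_0$ off the hyperplane $H$ as before. The remaining subtlety is a cone: the hypothesis excludes that $X$ is a cone, but one must still ensure that the residual/deformation constructions do not degenerate; since $X$ is normal and not a cone, a general hyperplane section is an \emph{irreducible} cubic surface that is not a cone, which is exactly what the classification of Section 3 is used to guarantee.

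The constructive second method (Section 5) is the fallback when the deformation argument cannot be pushed through --- it directly produces a (possibly singular) nondegenerate AG elliptic quintic. One first exhibits a (possibly reducible) rational quartic $C_4 \subset X$; then one builds a cubic scroll $\Sigma \supset C_4$ in $\D P^4$. Since $\deg \Sigma = 3$, the intersection $\Sigma \cap X$ is a curve of degree $9$ containing $C_4$; by liaison (linkage) on the ACM surface $\Sigma$ --- which is arithmetically Cohen--Macaulay, being a rational normal scroll --- the residual curve $\C = \overline{(\Sigma\cap X)\setminus C_4}$ has degree $5$, and the liaison formulas compute its arithmetic genus to be $1$ and show it is AG because it is linked to the AG curve $C_4$ inside the AG (complete intersection on $\Sigma$, or directly ACM) scheme $\Sigma \cap X$. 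One finally checks non-degeneracy and normality of $\C$: non-degeneracy is forced by the numerics ($\deg = 5$ in $\D P^4$ forces linear normality for an AG elliptic quintic) and normality ($R_1 + S_2$, or here simply being reduced and irreducible with at worst nodal-type singularities that do not destroy $H^0(\mathcal O_\C)\simeq\D C$) is extracted from the explicit geometry of the scroll.

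The main obstacle, as flagged in the introduction, is the case $\dim\Sing(X)=1$: controlling which weak Del Pezzo surfaces $\tilde S$ actually occur as resolutions of general hyperplane sections --- and hence which configurations of $(-2)$-curves one must avoid --- requires the detailed input of Segre's classification (Section 3), and producing $\tilde\C_0$ with the prescribed intersection numbers while staying disjoint from all the $(-2)$-curves is the delicate point. In the constructive method, the analogous hard step is guaranteeing the existence of the rational quartic $C_4$ and a cubic scroll through it in \emph{every} remaining case, and verifying that the residual $\C$ is genuinely AG (not merely ACM) and normal rather than, say, a non-reduced or disconnected degeneration.
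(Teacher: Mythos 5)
Your overall architecture is the paper's: the deformation method of Section~4 for the mild cases and the constructive residual-curve method of Section~5 for the rest. But two of your key justifications in the constructive branch are wrong, and your case division is too vague to close the proof. First, non-degeneracy of the residual quintic is \emph{not} ``forced by the numerics'': elliptic quintics contained in a hyperplane of $\D{P}^4$ exist in abundance (indeed the deformation method starts from one), so degree $5$ alone forces nothing. The paper gets non-degeneracy from the divisor class: since $\C\sim 2D+3F$ and $H\sim D+2F$ on the scroll, $h^0(\mathcal{I}_{\C/\Sigma}(1))=h^0(\mathcal{O}_\Sigma(-D-F))=0$. Second, your liaison argument for AG-ness does not work as stated: a rational quartic (irreducible or of the form $C_2\cup l\cup l'$) is not subcanonical, hence not AG, and $\Sigma\cap X\in|3H|$ is not of the form $|mH-K_\Sigma|$, so there is no reason for the total intersection to be AG; moreover linkage exchanges ACM-ness, not AG-ness. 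The paper's actual mechanism is that the residual class is exactly $-K_\Sigma$, and an effective anticanonical (more generally $mH-K_\Sigma$) divisor on an ACM surface satisfying $G_1$ is AG (Theorem~\ref{AG_anticanonical}); this is precisely why one must arrange $C_4\sim D+3F$ rather than $2D+2F$, a point your sketch does not engage with. The condition $H^0(\mathcal{O}_\C)\simeq\D{C}$ for the possibly reducible residual curve is then extracted from $0\to\mathcal{O}_\Sigma(K_\Sigma)\to\mathcal{O}_\Sigma\to\mathcal{O}_\C\to 0$ together with $h^0(K_\Sigma)=h^1(K_\Sigma)=0$, not from any scheme-theoretic normality ($R_1+S_2$ is not what ``normal'' means in ``normal elliptic quintic'' here).

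On the case division: you present the constructive method as a ``fallback,'' but in the paper it is essential and the boundary is explicit. The deformation method, as implemented, requires a general hyperplane section with at most $rA_1$ singularities, $r\le 3$ (isolated singularities, line or conic of first type, three concurrent non-coplanar lines), because its crux is the existence of a line through exactly one node (Claim~\ref{claim-ort}), which gives a $(-1)$-curve meeting exactly one $(-2)$-curve and hence the base-point-free system $|R-K_{\tilde S}+2E|$. For a line or conic of second type and for the secant variety of a rational quartic, the general hyperplane section has $A_2$, $2A_2$ or $4A_1$ singularities and the paper switches to the scroll construction. Without identifying this boundary and verifying the existence of the reducible quartic $C_4=C_2\cup l\cup l'$ case by case (which the paper does via the line configurations on the corresponding singular cubic surfaces), the proof is not complete.
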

We will finally show that the Ulrich sheaf $\E$ on $X$ constructed by a curve $\C$ obtained applying both our methods is locally free. This will ensure that $\E$ is skew-symmetric so that we can assert the following: 
\begin{thm*}[Theorem \ref{normal_Pfaff}] Let $X$ be a normal cubic threefold that is not a cone. Then $X$ is Pfaffian.%there exists a normal AG elliptic quintic curve $\C\subset X$.
\end{thm*}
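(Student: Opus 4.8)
The strategy is to obtain the Pfaffian representation of $X$ from the Ulrich sheaf attached to a normal AG elliptic quintic, so that the only substantial point is showing that this sheaf is locally free. First, fix by Theorem~\ref{normal_quintic} a normal AG elliptic quintic $\C\subset X$, and let $\E$ be the rank $2$ Ulrich sheaf on $X$ produced from $\C$ by the Serre correspondence of Section~2, sitting in
$$0\longrightarrow\mathcal{O}_X\longrightarrow\E\longrightarrow\mathcal{I}_{\C/X}(2)\longrightarrow 0 .$$
Since $\C$ is arithmetically Gorenstein it is Cohen--Macaulay of pure dimension $1$, so $\mathcal{I}_{\C/X}$ is reflexive and $\E$ is reflexive of rank $2$. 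By the general properties of the Serre correspondence, $\E$ is locally free at a point $x\in X$ precisely when $\C$ is a local complete intersection in $X$ at $x$, and this holds automatically at every $x\notin\C$; hence the locus where $\E$ fails to be locally free is contained in $\C\cap\Sing(X)$.

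The core of the argument is thus to prove that $\C$ is everywhere a local complete intersection in $X$. If $\C$ has been produced by the deformation method of Section~4, then $\C$ is a smooth curve disjoint from $\Sing(X)$, hence it lies inside the smooth threefold $X_{sm}$ and is automatically lci; so $\E$ is locally free. If instead $\C$ comes from the residuation construction of Section~5, where $\C$ may be singular, I would argue locally: at every $x\in X_{sm}$ the ring $\mathcal{O}_{X,x}$ is regular and $\mathcal{I}_{\C/X,x}$ is a height $2$ ideal with Gorenstein quotient, so by Hilbert--Burch together with the self-duality of its resolution forced by the Gorenstein condition it is a complete intersection; thus $\C$ is lci along $X_{sm}$. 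What remains --- and what I expect to be the main obstacle --- is the behaviour of $\C$ along $\Sing(X)$, where $\mathcal{O}_{X,x}$ is merely Gorenstein and the Hilbert--Burch argument breaks down: one must exploit the explicit geometry of the cubic scroll $\Sigma$ and of the rational quartic $C_4$ underlying the construction to show that the residual quintic $\C$ either avoids $\Sing(X)$ or, at the finitely many points of $\C\cap\Sing(X)$, is still cut out in $X$ by two equations. Granting this, $\E$ is locally free in all cases.

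Finally, by the dictionary recalled in Section~1 a locally free rank $2$ Ulrich sheaf on the cubic threefold $X$ yields a Pfaffian representation of $X$, and the passage is formal: such an $\E$ has $\det\E=\mathcal{O}_X(2)$ and, by definition of Ulrich sheaf, a linear minimal free resolution over $\D{P}^4$
$$0\longrightarrow\mathcal{O}_{\D{P}^4}^{\oplus 6}(-1)\xrightarrow{\ \varphi\ }\mathcal{O}_{\D{P}^4}^{\oplus 6}\longrightarrow\E\longrightarrow 0 ,$$
so that $\det\varphi$ is a nonzero scalar multiple of $F^{2}$, where $F$ is the cubic form defining $X$. The canonical pairing $\E\otimes\E\to\bigwedge^{2}\E=\det\E$ is alternating and, $\E$ being locally free, perfect; transporting it through the resolution one may choose bases making $\varphi$ skew-symmetric (it is precisely the alternating nature of this pairing that produces a skew-symmetric, rather than merely symmetric, representation). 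Then $\det\varphi=\Pf(\varphi)^{2}$, hence $\Pf(\varphi)=c\,F$ for some $c\neq 0$, i.e.\ $\varphi$ is a Pfaffian representation of $X$. This completes the proof.
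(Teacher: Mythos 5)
Your overall architecture matches the paper's: produce a normal AG elliptic quintic $\C\subset X$ via Theorem \ref{normal_quintic}, apply the Serre correspondence to get a rank $2$ Ulrich sheaf $\E$, show $\E$ is locally free, and then use the natural alternating pairing $\E\to\sheafhom(\E,\mathcal{O}_X(2))$ together with Beauville's equivalence (Proposition \ref{skew_Ulrich}) to extract the Pfaffian representation. The final formal step and the case where $\C$ is the smooth quintic of Section~4 (disjoint from $\Sing(X)$) are fine.

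The genuine gap is exactly the point you flag and then assume: local freeness of $\E$ at the points of $\C\cap\Sing(X)$ when $\C$ is produced by the residuation construction of Section~5. Writing ``granting this'' for what you yourself identify as the main obstacle leaves the theorem unproved in precisely the cases (line or conic of second type, rational quartic) that the residuation method is there to cover. Moreover, the route you sketch for closing it --- explicit geometry of the scroll $\Sigma$ to show that $\C$ avoids $\Sing(X)$ or is cut out by two equations there --- is not needed. The paper's Proposition \ref{pd_1} shows that for \emph{any} AG curve $\C$ in the Gorenstein hypersurface $X$ one has $\pdim(\mathcal{I}_{\C,x})=1$ at \emph{every} $x\in\C$, including points where $X$ is singular: this follows from the Auslander--Buchsbaum formula once one computes $\depth(\mathcal{I}_{\C,x})=2$ by local cohomology, and uses only that $\mathcal{O}_{X,x}$ is Cohen--Macaulay of dimension $3$, not that it is regular. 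Combined with Serre's freeness criterion (Proposition \ref{Serre}) and the fact that the extension class is $1\in H^0(\mathcal{O}_{\C})\simeq\Ext^1(\mathcal{I}_{\C},\mathcal{O}_X(-2))$ with $H^0(\mathcal{O}_{\C})\simeq\D{C}$ --- so that $1_x$ is a unit, hence a generator, of every stalk $\mathcal{O}_{\C,x}\simeq\Ext^1_{\mathcal{O}_{X,x}}(\mathcal{I}_{\C,x},\mathcal{O}_{X,x})$; this is where the condition $H^0(\mathcal{O}_{\C})\simeq\D{C}$, verified in the proof of Theorem \ref{normal_quintic} via the vanishing of $h^0$ and $h^1$ of $\mathcal{O}_{\Sigma}(K_{\Sigma})$, is actually used --- this yields local freeness of $\mathcal{N}$, hence of $\E=\mathcal{N}(2)$, at all points uniformly. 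In short, your observation that the Hilbert--Burch argument ``breaks down'' at non-regular points of $X$ is precisely what the projective-dimension-one computation of Proposition \ref{pd_1} is designed to circumvent; without it (or an equivalent substitute), your proof does not establish the statement.
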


In the last section we treat the cases where $X$ is either a cone over a cubic surface or a 3-dimensional cubic whose singular locus has dimension bigger than or equal to 2. As every cubic hypersurface of dimension \DM{two or less} is Pfaffian, we deduce immediately that the same holds whenever $X$ is a cone.
If $X$ is not normal (and \DM{then it is} singular along a linear space of dimension at least 2), it is easy to write explicitly a skew-symmetric matrix of linear forms $M$ such that  $X$ is defined by the equation $\DM{\Pf}(M)=0$. This will conclude the proof of our main result.
\section{Preliminaries}

\subsection{Rudiments from homological algebra: ACM and AG schemes }

We give some preliminary algebraic notions that we will use throughout the entire article.
\begin{defn} Let $(A,\mathfrak{m})$ be a Noetherian local domain. We say that \DM{an $A$-module $M$ is  Cohen-Macaulay (CM for short)} if:
	$$\DM{\depth}(M)=\DM{\dim\,}(M).$$
\end{defn}
%nb in questo modo è ovvio cosa sia un anello CM, perchè mi basta guardare la struttura di modulo sull anello medesimo 

\begin{rmk}\label{local_dim_depth} 
	We recall that we can give the following cohomological characterization, that will come in use later, of the depth and the dimension of a module $M$ over $(A,\mathfrak{m})$.
	
	\begin{itemize}
		\item $H^i_{\mathfrak{m}}(M)=0, \ \forall\: i<\DM{\depth}(M), \ \forall\: i>\DM{\dim\,}(M)$.
		\item $H^i_{\mathfrak{m}}(M)\ne 0$ for $i=\DM{\depth}(M)$ and $ i=\DM{\dim\,}(M)$.
	\end{itemize}
	We see then that \DM{$M$ is a CM $A$-module if and only if it has only one} \DM{non-vanishing local} cohomology group, namely $H^i_{\mathfrak{m}}(M), \ i=\DM{\dim\,}(M)$.
\end{rmk}

\begin{defn} Let $A$ be a Noetherian domain and $M$ an $A$-module. We say that $M$ is \DM{Cohen-Macaulay (CM for short)} if for all maximal ideals $\mathfrak{m}\subset A$, $M_{\mathfrak{m}}$ is a \DM{CM} $A_{\mathfrak{m}}$-module. 
\end{defn}

\DM{Let now $R$ be} the ring of polynomials in $n+1$ variables, $R:=\D{C}[X_0,\dots X_n]$; $R$ is a regular ring hence it is CM. Let $M$ be a graded CM $R$-module.
Localizing at \mbox{$\mathfrak{m}_0:=(X_0,\dots, X_n)$} and applying Auslander-Buchsbaum \DM{formula,}
$$ \DM{\pdim}(M_{\mathfrak{m}_0})+\DM{\depth}(M_{\mathfrak{m}_0})=\DM{\depth}(R_{\mathfrak{m}_0}),$$
we \DM{obtain}: 
$$\DM{\pdim}(M_{\mathfrak{m}_0})=\DM{\dim\,}(R_{\mathfrak{m}_0})-\DM{\dim\,}(M_{\mathfrak{m}_0}).$$
Therefore, $M$ admits a graded minimal free resolution \DM{of length $c=\codim(M)$}:
\begin{equation}\label{free_res}
\cdots 0\longrightarrow F_c \xrightarrow{\varphi{c}} F_{c-1}\xrightarrow{\varphi_{c-1}}\cdots \xrightarrow{\varphi_1} F_0\longrightarrow M \longrightarrow 0,
\end{equation}
where each term $F_i$ is of the form $F_i= \bigoplus\limits_j R(a_{i,j})$. 

\begin{rmk} Summing up, we see that if $M$ is a \DM{graded} CM $R$-module, then:
	\begin{itemize}
		\item For every maximal \DM{ideal} $\mathfrak{m}\subset R$, the only \DM{non-vanishing} local cohomology group is $H^{i}_{\mathfrak{m}}(M_{\mathfrak{m}}), \ i=\DM{\dim\,}(M)$ (this is due to remark \ref{local_dim_depth}).
		\item For every integer $a\in \D{Z}$,  $\DM{\Ext}^i(M,R(a))=0$ whenever $i> \DM{\codim} (M)$ (this is due to the fact that a minimal free resolution of $M$ is of the form (\ref{free_res})).
	\end{itemize}
\end{rmk}

\ 
\DM{Let} now $X$ \DM{be} a closed subscheme of $\D{P}^n:=\DM{\PProj}(R)$ and denote by $R(X)$ its homogeneous coordinate ring,
$R(X)=R/I_X$, (\DM{$I_X$ being} the saturated ideal of $X$).

\begin{defn} A closed subscheme $X\subset \D{P}^n$ is called \textit{\DM{arithmetically} Cohen-Macaulay} (ACM for short),  if its homogeneous coordinate ring $R(X)$ is a Cohen-Macaulay ring. 
\end{defn}

\DM{
	\begin{rmk}
		\label{projnormcurve}
		A projectively embedded curve $C\subset \D{P}^n$ is ACM if and only if it is CM and $H^1(\mathcal I_C(k))=0$ for all $k\in \mathbb Z$. This is closely related to the projective normality: $C$ is projectively normal if and only if it is normal and the the restriction maps on global sections $H^0(\mathcal O_{\mathbb P^n}(k))\to H^0(\mathcal O_{C}(k))$ are surjective for all $k$. In particular, if $C$ is smooth, then it is ACM if and only if it is projectively normal.
	\end{rmk}
}

\begin{defn}A closed subscheme $X\subset \D{P}^n$ of codimension $c$ is called \textit{\DM{arithmetically} Gorenstein} (AG for short)  if the following hold:
	\begin{itemize}
		\item $X$ is ACM.
		\item The canonical module of $R(X)$, $K_X:=\DM{\Ext}_S^c(R(X),R)(-n-1)$, is isomorphic to $R(X)(a)$ for some $a\in \D{Z}$.
	\end{itemize}
\end{defn}  

\begin{rmk} 
	\begin{itemize}
		\item The condition that a subscheme $X\subset \D{P}^n$ of codimension $c$ is AG, is equivalent to requiring that $R(X)$ admits a graded minimal free resolution of the form (\ref{free_res}), such that the term $F_c$ has rank 1.
		\item If $X$ is AG, from the isomorphism $K_X\simeq R(X)(a)$ we get that the canonical sheaf $\omega_X=\sheafext^c_S(\mathcal{O}_X, \mathcal{O}_{\D{P}^n})(-n-1)$ is isomorphic to $\mathcal{O}_X(a)$, for some $a \in \D{Z}$.
	\end{itemize}
\end{rmk}

\DM{\subsubsection*{ACM and Ulrich sheaves}}

\begin{defn} A coherent sheaf $\mathcal{E}$ on $\D{P}^n$ is \DM{an} \textit{\DM{arithmetically} Cohen-Macaulay} (ACM for short) sheaf if $E:=\bigoplus_{j\in \D{Z}}H^0(\mathcal{E}(j))$, its module of twisted global \DM{sections}, is a (graded) Cohen-Macaulay module over $\D{C}[X_0,\dots, X_n]$.
\end{defn}

We observe that a coherent sheaf $\mathcal{E}$ on $\D{P}^n$ is ACM if and only if:
\begin{itemize}
	\item $\mathcal{E}_x$ is a Cohen-Macaulay $\mathcal{O}_{\D{P}^n,x}$ module $\forall x \in \D{P}^n$ (that is, $E_{\mathfrak{m}_x}$ is CM for all maximal ideals $\mathfrak{m}_x$ different from the irrelevant ideal $\mathfrak{m}_0:=(X_0,\dots, X_n))$.
	\item $H^i(\mathcal{E}(j))=0$, for $0<i< \DM{\dim\,}(\DM{\Supp}(\mathcal{E}))$, \ for $j\in \D{Z}$. This condition is equivalent to the fact that $E_{\mathfrak{m}_0}$ is a Cohen Macaulay $\D{C}[X_0,\dots, X_n]_{\mathfrak{m}_0}$ module.
\end{itemize} 

If $\E$ is ACM, from a minimal graded resolution of its module of twisted global \DM{sections} $E$ of the form (\ref{free_res}), we get a locally free resolution \DM{of $\E$}
\begin{equation}\label{free_res_sheaf}
\cdots 0\longrightarrow \bigoplus_j \mathcal{O}_{\D{P}^n}(a_{c,j}) \xrightarrow{\varphi_{c}} \bigoplus_j \mathcal{O}_{\D{P}^n}(a_{c-1,j})\xrightarrow{\varphi_{c-1}}\cdots \xrightarrow{\varphi_1} \bigoplus_j \mathcal{O}_{\D{P}^n}(a_{0,j})\longrightarrow \E \longrightarrow 0
\end{equation}
of length $c=\DM{\codim} (\DM{\Supp}(\E))$. Note that every differential can be represented as a matrix whose entries are homogeneous forms on $\D{P}^n$.

\begin{defn}A coherent sheaf $\mathcal{E}$ on $\D{P}^n$ is \DM{an} Ulrich sheaf if:
	\begin{itemize}
		\item $\mathcal{E}$ is ACM.
		\item  $H^{i}(\mathcal{E}(-i))=0$ for $i\ge 1$ and $H^{i}(\mathcal{E}(-i-1))=0$ for $i\le \DM{\dim\,}(\DM{\Supp}(\mathcal{E})).$
	\end{itemize} 
\end{defn}

\begin{rmk}\label{res_Ulrich}
	Let $\E$ be an ACM sheaf and denote by $c$ the codimension of $\DM{\Supp}(\E)$. Writing a locally free resolution of $\E$ as (\ref{free_res_sheaf}), from the definition we get that $\E$ is Ulrich if and only if this resolution is \textit{linear}, \DM{that is, if} it has the following form:
	\begin{equation}\label{free_res_U}
	\cdots 0\longrightarrow \mathcal{O}_{\D{P}^n}^{\oplus{r_c}}(-c) \xrightarrow{\varphi_c} \mathcal{O}_{\D{P}^n}^{\oplus{r_{c-1}}}(-c+1)\xrightarrow{\varphi_{c-1}}\cdots \mathcal{O}_{\D{P}^n}^{\oplus{r_1}}(-1)\xrightarrow{\varphi_1} \mathcal{O}_{\D{P}^n}^{\oplus{r_0}}\longrightarrow \E \longrightarrow 0.
	\end{equation}
\end{rmk}

\DM{\subsection{Determinantal hypersurfaces}}
Let $V$ be a complex vector space of dimension $n+1$, $F\in S^{d}(V^*)$ \DM{a homogeneous} form of degree $d$ on $\D{P}(V)$ and $r$ an integer bigger than or equal to one. Denote by $\mathcal{M}_{rd\times rd}$ the vector space of square matrices of size $rd$.
It is a classical problem in algebraic geometry to determine whether \DM{some power of} $F$ can be written \DM{as the determinant} of a matrix of linear forms, \DM{that is,} whether there \DM{exists} 
$$M \in \mathcal{M}_{rd\times rd}\otimes_{\D{C}} V^* \ \text{such that} \ F^r=\DM{\det}(M).$$   
%If this is the case, the matrix $M$ is referred to as a \textit{linear determinantal representation} of $F$. 
%The existence of such a determinantal representation of the polynomial $F$
The existence of such a matrix $M$ is equivalent to the existence of a Ulrich sheaf of rank $r$ on $X$, the hypersurface in $\D{P}(V)$ defined by the equation $\{F=0\}$. 

\

\begin{prop}Let $X$ be a degree $d$ hypersurface in $\D{P}(V)\simeq \D{P}^n$ defined by an equation $F=0, \ F\in S^{d}(V^*)$.
	The following conditions are equivalent:
	\begin{itemize}
		\item There exists $M\in \mathcal{M}_{rd\times rd}\otimes_{\D{C}} V^*$ such that $F^r=\DM{\det}(M).$
		\item There \DM{exists} an Ulrich sheaf $\mathcal{E}$ supported on $X$ and \DM{of rank $r$ as an $\mathcal{O}_X$-module}.  %carries a rank $r$ Ulrich sheaf $\mathcal{E}$ 
	\end{itemize}
\end{prop}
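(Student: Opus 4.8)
The plan is to prove the two implications separately, using the structure of the minimal free resolution of an Ulrich sheaf established in Remark \ref{res_Ulrich}.

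For the implication ``Ulrich sheaf $\Rightarrow$ determinantal representation'': suppose $\mathcal{E}$ is an Ulrich sheaf of rank $r$ supported on $X$. By Remark \ref{res_Ulrich}, since $\Supp(\mathcal{E})=X$ is a hypersurface, we have $c=\codim(X)=1$, so the minimal free resolution \eqref{free_res_U} of $\mathcal{E}$ has length one:
\begin{equation*}
0\longrightarrow \mathcal{O}_{\D{P}^n}^{\oplus r_1}(-1)\xrightarrow{\ \varphi_1\ }\mathcal{O}_{\D{P}^n}^{\oplus r_0}\longrightarrow \mathcal{E}\longrightarrow 0.
\end{equation*}
First I would check that $r_0=r_1$: restricting to a general line $\ell\subset \D{P}^n$ (or comparing Hilbert polynomials), the generic rank of the map $\varphi_1$ forces the two free modules to have equal rank, call it $N$, since $\mathcal{E}$ is generically zero off $X$; then counting first Chern classes, $c_1(\mathcal{E})$ is a multiple of $X$ — more precisely the determinant of $\varphi_1$ is a degree-$N$ form vanishing on $X$, and the rank count $H^0(\mathcal{E})$ computed from the Ulrich vanishing gives $N=rd$. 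Next, set $M:=\varphi_1\in\mathcal{M}_{rd\times rd}\otimes V^*$. Since $\varphi_1$ is injective as a sheaf map and $\mathcal{E}$ is supported on $X$, the form $\det(M)$ vanishes exactly on $X$ with the right multiplicity; comparing degrees ($\deg\det M = rd$) and using that $\mathcal{E}$ has rank $r$ on $X$ (so $\det M$ vanishes to order $r$ along the degree-$d$ hypersurface, i.e. the divisor of $\det M$ is $rX$), we conclude $\det(M)=\lambda F^r$ for a nonzero scalar $\lambda$, which can be absorbed into $M$.

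For the converse ``determinantal representation $\Rightarrow$ Ulrich sheaf'': given $M\in\mathcal{M}_{rd\times rd}\otimes V^*$ with $F^r=\det(M)$, I would define $\mathcal{E}:=\coker(\varphi_1)$ where $\varphi_1\colon\mathcal{O}_{\D{P}^n}^{\oplus rd}(-1)\to\mathcal{O}_{\D{P}^n}^{\oplus rd}$ is the map of sheaves given by $M$. Because $\det(M)=F^r$ is not identically zero, $\varphi_1$ is injective (generically an isomorphism), so
\begin{equation*}
0\longrightarrow \mathcal{O}_{\D{P}^n}^{\oplus rd}(-1)\xrightarrow{\ \varphi_1\ }\mathcal{O}_{\D{P}^n}^{\oplus rd}\longrightarrow \mathcal{E}\longrightarrow 0
\end{equation*}
is exact. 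Then: (a) $\mathcal{E}$ is annihilated by $\det(M)=F^r$, hence (using the adjugate/Cramer relation $M\cdot\mathrm{adj}(M)=\det(M)\,\mathrm{Id}$, which shows $F^r\cdot\mathrm{Id}$ factors through $\varphi_1$) $\mathcal{E}$ is supported on $X$; one checks it is in fact an $\mathcal{O}_X$-module of rank $r$ by restricting to a general point of $X$. (b) The displayed resolution is a locally free resolution of length $c=1=\codim(\Supp\mathcal{E})$ with linear differential; taking cohomology of the twists of this short exact sequence and using $H^i(\mathcal{O}_{\D{P}^n}(k))=0$ for $0<i<n$ and the relevant vanishing/nonvanishing in top and bottom degrees, one verifies that $E=\bigoplus_j H^0(\mathcal{E}(j))$ is a CM module (equivalently, the local conditions plus $H^i(\mathcal{E}(j))=0$ for $0<i<\dim\Supp\mathcal{E}$ hold), so $\mathcal{E}$ is ACM, and the linearity of the resolution is exactly the Ulrich condition by Remark \ref{res_Ulrich}. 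Alternatively, I would simply invoke Remark \ref{res_Ulrich} in reverse: a sheaf with a linear resolution of length $c$ as in \eqref{free_res_U} is Ulrich, and \eqref{free_res_U} for $c=1$ is precisely the resolution just built.

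The main obstacle I anticipate is the bookkeeping around multiplicities and ranks: pinning down that a determinantal representation of $F^r$ by an $rd\times rd$ matrix forces the cokernel to have rank exactly $r$ as an $\mathcal{O}_X$-module (and conversely that an Ulrich sheaf of rank $r$ yields a matrix of size exactly $rd$ with $\det=F^r$ rather than $F^{r'}$ for some other $r'$), since this requires carefully tracking Hilbert polynomials / Chern class computations and the behaviour of $\varphi_1$ at the generic point of $X$. Everything else is a formal consequence of Remark \ref{res_Ulrich} and the standard cohomology of line bundles on $\D{P}^n$.
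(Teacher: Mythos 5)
Your proposal is correct and follows essentially the same route as the paper: in both directions the pivot is the linear length-one resolution $0\to\mathcal{O}_{\D{P}^n}^{\oplus rd}(-1)\xrightarrow{\varphi_1}\mathcal{O}_{\D{P}^n}^{\oplus rd}\to\mathcal{E}\to 0$, identifying $\varphi_1$ with $M$ and invoking Remark \ref{res_Ulrich} to pass between the linearity of the resolution and the Ulrich condition. Your treatment of the rank and multiplicity bookkeeping (the adjugate relation, $h^0(\mathcal{E})=rd$, the divisor of $\det M$ being $rX$) is if anything slightly more explicit than the paper's, which settles these points by the Chern class computation $c_1(\mathcal{E})=r[dH]=r[X]$ and the degree count.
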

\begin{proof}If $F$ is such that $F^r=\det(M)$ for $M \in \mathcal{M}_{rd\times rd}\otimes_{\D{C}}V^*$, %admits a linear determinantal representation $M \in \mathcal{M}_{rd\times rd}\otimes_{\D{C}}V^*$, 
	the matrix $M$ defines a morphism of vector bundles over $\D{P}^n$ that fits in a short exact sequence:
	\begin{equation}\label{ulrich_hs}
	0\longrightarrow \mathcal{O}_{\D{P}^n}^{\oplus{rd}}(-1)\overset{M}\longrightarrow \mathcal{O}_{\D{P}^n}^{\oplus{rd}} \longrightarrow \E \longrightarrow 0
	\end{equation}
	\DM{where} $\E:= \coker(M)$. $\E$ is a coherent sheaf supported on $X$, and as \mbox{$c_1(\E)=r[dH]=r[X]$}, $\E$ has rank $r$ on $X$. $\E$ is thus a coherent sheaf admitting a minimal free resolution of length $1=\DM{\codim}(\DM{\Supp}(\E))$, hence it is ACM. 
	Since moreover this resolution is linear, from remark \ref{res_Ulrich} we deduce that $\E$ is Ulrich.
	\DM{Vice versa,} let $\E\in \DM{\Coh\,}(X)$ be a rank $r$ Ulrich sheaf; \DM{ then $\E$} admits a linear minimal free resolution of length 1. The only nonvanishing differential of this resolution $\mathcal{O}^{\oplus{rd}}_{\D{P}^n}(-1)\xrightarrow{\varphi_1} \mathcal{O}^{\oplus{rd}}_{\D{P}^n}$ \DM{defines} an element
	$M\in\mathcal{M}_{rd\times rd}\otimes_{\D{C}} V^*$. As $X=\DM{\Supp}(\E)=\{x\in \D{P}^n \mid \DM{\rk}(\phi(x))\le rd \}$, we deduce that $\DM{\det}(M)=F^r$. %so that $M$ is a linear determinantal representation of $F$. 
\end{proof}

%\DM{\subsection{Symmetric and skew-symmetric determinantal representations}}
We now look for \textit{$\epsilon$-symmetric determinantal representations} of $F$, namely we ask if ever we can find an element $M \in \mathcal{M}_{2d\times 2d}\otimes_{\D{C}} V^*$ such that:
\begin{itemize}
	\item $F^2=\DM{\det}(M)$;
	\item $M$ is $\epsilon$-symmetric that is, $ M^T=\epsilon M$, $\epsilon =\pm 1$.
\end{itemize}
$\epsilon$-symmetric determinantal representations are associated to rank 2 Ulrich sheaves $\mathcal{E}$ on $X$ that, additionally, are \textit{$\epsilon$-symmetric}.

We recall briefly how $\epsilon$-symmetric sheaves are defined.
To start with, consider an ACM-sheaf $\mathcal{E}$ on $\D{P}^n$ supported on a hypersurface $X$ and endowed with a morphism of sheaves $\phi: \mathcal{E}\to \sheafhom(\mathcal{E}, \mathcal{O}_X(N))$ for some integer $N$. Applying  the functor $\sheafhom(\ \cdot, \mathcal{O}_X(N))$ we get a morphism $\mathcal{E}^{\vee \vee} \to \mathcal{E}^{\vee}(N)$, composing then with the natural homomorphism $\mathcal{E}\to \mathcal{E}^{\vee \vee}$ we get $\phi^T$, the \textit{transpose} of $\phi$:
$$\phi^T:\mathcal{E}\to \mathcal{E}^{\vee}(N).$$
\begin{defn} An ACM sheaf $\mathcal{E}$ on $\D{P}^n$, endowed with a sheaf morphism \mbox{$\phi:\mathcal{E}\to \mathcal{E}^{\vee}(N)$}, is called $\epsilon$-symmetric if $\phi$ is an isomorphism and $\phi^T=\epsilon \phi \ (\epsilon=\pm 1).$
\end{defn}

\begin{prop}\label{skew_Ulrich}Let $X$ be a degree $d$ hypersurface in $\D{P}(V)\simeq \D{P}^n$ defined by an equation $F=0, \ F\in S^{d}(V^*)$.
	The following conditions are equivalent:
	\begin{itemize}
		\item There exists \DM{an $\epsilon$-symmetric matrix $M\in \mathcal{M}_{2d\times 2d}\otimes_{\D{C}} V^*$} such that \mbox{$F^2=\DM{\det}(M)$.} 
		\item There exists a rank 2 $\epsilon$-symmetric Ulrich sheaf $\mathcal{E}$ on $X$.
	\end{itemize}
\end{prop}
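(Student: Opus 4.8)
The plan is to prove the equivalence in Proposition \ref{skew_Ulrich} by upgrading the argument of the previous (non-symmetric) proposition, tracking the extra symmetry data throughout. The easy direction is from matrices to sheaves: given an $\epsilon$-symmetric $M\in\mathcal{M}_{2d\times 2d}\otimes_{\D{C}}V^*$ with $F^2=\det(M)$, the previous proposition already tells us that $\E:=\coker(M)$ is a rank $2$ Ulrich sheaf on $X$, so I only need to produce the isomorphism $\phi:\E\to\E\dual(N)$ with $\phi^T=\epsilon\phi$ for a suitable $N$. To do this I would dualize the resolution
\begin{equation*}
0\longrightarrow \mathcal{O}_{\D{P}^n}^{\oplus 2d}(-1)\overset{M}\longrightarrow \mathcal{O}_{\D{P}^n}^{\oplus 2d}\longrightarrow \E\longrightarrow 0
\end{equation*}
applying $\sheafhom(\,\cdot\,,\mathcal{O}_{\D{P}^n})$; since $\E$ is supported on the hypersurface $X=\{F=0\}$ and its projective dimension is $1$, one gets $\sheafext^1(\E,\mathcal{O}_{\D{P}^n})\simeq \coker(M^T)(1)$ (up to the twist coming from $\det M = F^2$), together with the identification of $\sheafext^1(\E,\mathcal{O}_{\D{P}^n})$ with $\E\dual$ twisted appropriately — concretely $\E\dual$ as an $\mathcal{O}_X$-sheaf sits in $\sheafext^1(\E,\mathcal{O}_{\D{P}^n}(-d))$, so after bookkeeping one finds $\E\simeq \E\dual(N)$ with $N=d-2$ (i.e. $N$ determined by $c_1$). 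The hypothesis $M^T=\epsilon M$ then says exactly that this isomorphism, read off at the level of the presenting matrices, is interchanged with its transpose by the scalar $\epsilon$; a small diagram chase (using that $\coker M$ determines $M$ up to the $\mathrm{GL}$-action, and that an isometry of resolutions inducing $\pm\phi$ forces $\phi^T=\epsilon\phi$) gives $\phi^T=\epsilon\phi$.

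For the converse — from a rank $2$ $\epsilon$-symmetric Ulrich sheaf $\E$ to an $\epsilon$-symmetric matrix — I would start from the linear minimal free resolution guaranteed by Remark \ref{res_Ulrich},
\begin{equation*}
0\longrightarrow \mathcal{O}_{\D{P}^n}^{\oplus 2d}(-1)\xrightarrow{\ \varphi_1\ }\mathcal{O}_{\D{P}^n}^{\oplus 2d}\longrightarrow \E\longrightarrow 0,
\end{equation*}
where the rank count $2d$ comes exactly as in the previous proof from $c_1(\E)=2[X]$ and linearity (so $\det\varphi_1=F^2$ up to scalar). The isomorphism $\phi:\E\to\E\dual(N)$ now has to be pulled back to an isomorphism of the two resolutions: dualizing the resolution of $\E$ (again using $\pdim=1$, $\sheafext^{\ge 2}$ vanishing, and that the dual complex computes a shifted resolution of $\E\dual$) produces a resolution of $\E\dual(N)$ of the shape $0\to\mathcal{O}^{\oplus 2d}(-1)\xrightarrow{\varphi_1^T}\mathcal{O}^{\oplus 2d}\to\E\dual(N)\to 0$ after the correct twist. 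By minimality and the uniqueness of minimal free resolutions, $\phi$ lifts to a pair of isomorphisms $(\alpha,\beta)$ of the two complexes, i.e. $\beta\varphi_1=\varphi_1^T\alpha$; since the resolutions are minimal and linear, $\alpha,\beta$ are scalar-matrix (constant) isomorphisms. Replacing $\varphi_1$ by $M:=\lambda\,\varphi_1$ for a suitable constant $\lambda$, or rather conjugating $\varphi_1$ by a suitable constant matrix, turns the relation $\beta\varphi_1=\varphi_1^T\alpha$ into $M^T=\epsilon M$; the identity $\phi^T=\epsilon\phi$ is precisely what is needed to make the two constant twisting matrices compatible so that such a normalization exists. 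Then $F^2=\det M$ and $M$ is $\epsilon$-symmetric, as required.

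The main obstacle I anticipate is the careful homological bookkeeping of twists and of the self-duality: one must check that the dual of the length-one resolution genuinely computes a (minimal, linear) resolution of $\E\dual(N)$ for the correct $N$ — this uses that $\E$ has no zero-dimensional or embedded components (it is ACM and rank $2$ on the hypersurface), that $\sheafext^i(\E,\mathcal{O}_{\D{P}^n})=0$ for $i\neq 1$, and that $\E\dual$ here should be interpreted as $\sheafhom_{\mathcal{O}_X}(\E,\mathcal{O}_X)$ up to a fixed twist by $\omega_X=\mathcal{O}_X(d-n-1)$. The other delicate point is passing from ``$\phi$ lifts to isomorphisms of complexes'' to ``the lifting matrices are constant and can be normalized to make $M$ exactly $\epsilon$-symmetric on the nose'': this is where $\phi^T=\epsilon\phi$ (as opposed to merely $\phi$ being an isomorphism) is essential, because an isomorphism $\phi$ with $\phi^T=\epsilon'\phi$ for the \emph{other} sign $\epsilon'=-\epsilon$ would instead yield the opposite-symmetry normalization; so one has to verify that the sign of the symmetry of $M$ is genuinely read off from, and determined by, the sign in $\phi^T=\epsilon\phi$. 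Everything else — the rank computation, the vanishing of intermediate cohomology, $\det M=F^2$ — is exactly as in the non-symmetric proposition already proved above and can be quoted.
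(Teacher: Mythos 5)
The paper does not actually prove this proposition: its ``proof'' is the citation to Beauville's Theorem~B, so there is no in-house argument to compare with. What you have written is essentially a reconstruction of Beauville's argument, and the architecture is the right one: quote the non-symmetric proposition for the Ulrich/rank/determinant content; in the forward direction read the self-duality $\mathcal{E}\simeq\mathcal{E}^{\vee}(N)$ off the $\sheafext^1$ of the length-one resolution; in the converse direction lift $\phi$ to a morphism between the minimal resolution and its dual, note that the lifting maps are constant invertible matrices, and use $\phi^T=\epsilon\phi$ to symmetrize the presentation matrix. Each hypothesis is used in the right place.

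Two points need repair, though. First, the twist is off by one. Applying $\sheafhom(\,\cdot\,,\mathcal{O}_{\mathbb{P}^n})$ to $0\to\mathcal{O}(-1)^{\oplus 2d}\xrightarrow{M}\mathcal{O}^{\oplus 2d}\to\mathcal{E}\to 0$ gives $\sheafext^1(\mathcal{E},\mathcal{O}_{\mathbb{P}^n})\simeq\coker(M^T)(1)\simeq\mathcal{E}(1)$ when $M^T=\epsilon M$, while $\sheafext^1(\mathcal{E},\mathcal{O}_{\mathbb{P}^n})\simeq\sheafhom_{\mathcal{O}_X}(\mathcal{E},\mathcal{O}_X(d))$ since $\sheafext^1(\mathcal{O}_X,\mathcal{O}_{\mathbb{P}^n})=\mathcal{O}_X(d)$; hence $\mathcal{E}\simeq\mathcal{E}^{\vee}(d-1)$, i.e.\ $N=d-1$, not $d-2$. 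The paper's own usage confirms this: for cubic threefolds ($d=3$) the skew form is $\phi:\mathcal{E}\to\sheafhom(\mathcal{E},\mathcal{O}_X(2))$. Second, in the converse direction the normalization is neither a rescaling $M=\lambda\varphi_1$ nor a conjugation. If $(\alpha,\beta)$ is the lift of $\phi$, so $\beta\varphi_1=\varphi_1^T\alpha$ with $\alpha,\beta$ constant and invertible, then this lift is unique (there are no nonzero homotopies $\mathcal{O}^{\oplus 2d}\to\mathcal{O}(-1)^{\oplus 2d}$), dualizing shows that $(\beta^T,\alpha^T)$ lifts $\phi^T$, and uniqueness together with $\phi^T=\epsilon\phi$ forces $\alpha^T=\epsilon\beta$. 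The $\epsilon$-symmetric matrix is then the one-sided composite $M:=\beta\varphi_1=\varphi_1^T\alpha$, which has the same cokernel $\mathcal{E}$, followed by a scalar rescaling (harmless for $\epsilon$-symmetry) to make $\det M$ exactly $F^2$. With these two fixes your argument is complete and agrees with the source the paper cites.
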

\begin{proof} See \cite{Beauville}, theorem 2.B.
\end{proof}
\begin{comment}
\begin{rmk} As for what has been discussed in Chapter \ref{chapter_1}, sect. \ref{Skew-sym-res}, we see that given $X$ a smooth cubic threefold and $\F$ an instanton on $X$, the twisted bundle $\F(1)$ is a rank 2 skew-symmetric Ulrich bundle on $X$.
\end{rmk}
\end{comment}
%\input{preliminaries.tex}
\section{Ulrich sheaves and AG normal elliptic quintics}%Pfaffian representation and AG normal elliptic quintics}

We now study in detail Pfaffian representations of cubic threefolds.
Throughout the rest of the chapter we denote by $V$ a 5-dimensional linear space, by $F\in S^3(V^*)$ a homogeneous form of degree 3 and by $X\subset \D{P}(V)\simeq \D{P}^4$ the cubic hypersurface defined by the equation $\{F=0\}$.
For what has been discussed so far, we can assert that $X$ is Pfaffian whenever we are able to show \DM{that there} exists 
a rank 2 Ulrich sheaf $\mathcal{E}$ \DM{on $X$} endowed with a sheaf isomorphism $\phi:\mathcal{E}\to \sheafhom(\mathcal{E}, \mathcal{O}_X(2))$, such that $\phi^T=-\phi$ (namely a \textit{skew-symmetric Ulrich sheaf}).
%that it carries a rank 2 skew-symmetric Ulrich sheaf. whenever we are able to show that a cubic threefold carries a rank 2 skew-symetric Ulrich sheaf, we can assert that $X$ is Pfaffian. 
One of the possible ways to prove that, more generally, $X$ carries a rank 2 Ulrich sheaf is the so called \textit{Serre correspondence}: a technique that allows to construct rank 2 ACM sheaves on $X$ starting from codimension 2 subschemes of $X$.
\DM{\subsection{Serre correspondence}}

%\textbf{Una hypersurf soddisfa sempre S2, quindi è inutile metterlo. ESATTO QUESTO VUOL DIRE CHE PER UNA HYPERSURF, AVERE NORMALITà E SOLO R1 }
Throughout the section we suppose that all cubic threefolds $X$ we deal with are integral.
\begin{comment}
For what has been discussed so far, we can assert that a cubic threefold is Pfaffian whenever we are able to show that it carries a rank 2 skew-symmetric ulrich sheaf. %whenever we are able to show that a cubic threefold carries a rank 2 skew-symetric Ulrich sheaf, we can assert that $X$ is Pfaffian. 
A possible way to prove the existence of such a sheaf is the so called Serre correspondence: a technique that, more generally, allows to construct rank 2 ACM sheaves on a variety $X$ starting from codimension 2 AG subschemes of $X$.  

In Chapter \ref{chapter_1}, sect. \ref{instanton}, we introduced Serre's correspondence on a smooth cubic threefold $X$: a technique that allowed us to obtain a rank 2 Ulrich bundle $\E$ on $X$ (or equivalently the instanton $\E(-1)$), starting from a locally complete intersection elliptic quintic curve $\C$ on $X$. 
\end{comment}
We illustrate how Serre correspondence applies to a (not necessarily smooth) cubic threefold $X$. %which are not necessarily smooth.
%More generally, Serre correspondence permits to construct rank 2 ACM sheaves on \DM{a variety} $X$, starting from codimension 2 AG subschemes of $X$. 
In this specific case a codimension 2 AG subscheme of $X$ is an AG curve $\C\subset X$ and since moreover $X$ is AG as well (as it is a hypersurface) curves of such a kind admit the following useful characterization:
\begin{comment}
Note that moreover, since $X$ is a hypersurface, it is an AG scheme. \DM{Its} canonical sheaf is $\omega_X\simeq \mathcal{O}_X(-2)$.
From now on we will suppose that $X$ is integral. 
To start with, we give a useful characterization of AG curves contained in $X$. 
\end{comment}

\begin{prop}\label{pd_1} Let $\mathcal{C}\subset X$ be an AG curve and let $\mathcal{I}_{\C}$ be the ideal sheaf of $\C$ in $X$. For every point $x \in \mathcal{C}, \ \DM{\pdim}(\mathcal{I}_{\mathcal{C},x})=1$.
\end{prop}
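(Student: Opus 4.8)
The statement is purely local, so the plan is to fix a point $x\in\mathcal C$ and work in the local ring $\mathcal O_{X,x}$, which is the quotient of the regular local ring $\mathcal O_{\D{P}^4,x}$ by a single nonzerodivisor (the equation of $X$). First I would reduce to computing projective dimension over $\mathcal O_{\D{P}^4,x}$ instead: the ideal sheaf of $\C$ in $\D{P}^4$ sits in an exact sequence $0\to\mathcal I_{X,x}\to\mathcal I_{\C/\D{P}^4,x}\to\mathcal I_{\C,x}\to 0$, with $\mathcal I_{X,x}$ free of rank one, so the projective dimensions over $\mathcal O_{\D{P}^4,x}$ of $\mathcal I_{\C/\D{P}^4,x}$ and $\mathcal I_{\C,x}$ differ in a controlled way; alternatively one can compare resolutions directly. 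The cleanest route, though, is to use the Auslander--Buchsbaum formula on $\mathcal O_{X,x}$: since $\pdim(\mathcal I_{\C,x})=\depth(\mathcal O_{X,x})-\depth(\mathcal I_{\C,x})$, it suffices to show $\mathcal O_{X,x}$ is Cohen--Macaulay of dimension $3$ (true because $X$ is a hypersurface, hence ACM, and integral of dimension $3$) and that $\depth(\mathcal I_{\C,x})=2$, i.e. $\mathcal I_{\C,x}$ is a maximal Cohen--Macaulay $\mathcal O_{X,x}$-module of dimension $2$.

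So the heart of the argument is: $\mathcal I_{\C,x}$ is CM of dimension $2$ at every point $x\in\mathcal C$. Here I would use the hypothesis that $\C$ is AG, hence in particular ACM: the curve $\C$ has Cohen--Macaulay structure sheaf of pure dimension $1$, so $\depth(\mathcal O_{\C,x})=1$. Plugging the sequence $0\to\mathcal I_{\C,x}\to\mathcal O_{X,x}\to\mathcal O_{\C,x}\to 0$ into the depth lemma (the standard long exact sequence / ``depth is two-out-of-three'' estimate): from $\depth\mathcal O_{X,x}=3$ and $\depth\mathcal O_{\C,x}=1$ one reads off $\depth\mathcal I_{\C,x}\ge\min(3,1+1)=2$, and since $\mathcal I_{\C,x}\subset\mathcal O_{X,x}$ has dimension equal to $\dim\mathcal O_{X,x}=3$... — wait, one must be careful: $\mathcal I_{\C,x}$ as a module has dimension $3$ (it is a rank-one submodule of $\mathcal O_{X,x}$, $X$ integral), not $2$. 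Then Auslander--Buchsbaum gives $\pdim\mathcal I_{\C,x}=3-\depth\mathcal I_{\C,x}$, so I actually need $\depth\mathcal I_{\C,x}=2$ exactly, and an upper bound $\depth\mathcal I_{\C,x}\le 2$ must also be supplied. That upper bound comes again from the same short exact sequence read the other way: if $\depth\mathcal I_{\C,x}$ were $\ge 3$, then $\depth\mathcal O_{\C,x}\ge\min(\depth\mathcal I_{\C,x}-1,\depth\mathcal O_{X,x})\ge 2$, contradicting $\depth\mathcal O_{\C,x}=1$ (which holds because $\C$ is CM of pure dimension one at $x$, equivalently $\mathfrak m_x$ is not an associated prime of $\mathcal O_{\C,x}$). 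So $\depth\mathcal I_{\C,x}=2$, and $\pdim\mathcal I_{\C,x}=1$.

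I expect the main obstacle to be bookkeeping rather than conceptual: making sure the depth/dimension invariants are applied over the correct ring ($\mathcal O_{X,x}$, which is CM of dimension $3$) and that the two-sided estimate from the depth lemma is pushed to an equality. One subtlety worth a remark is that $\C$ being ACM (which follows from AG) is exactly what guarantees $\mathcal O_{\C,x}$ is CM of pure dimension $1$ — so $\C$ has no embedded or isolated zero-dimensional components — which is the input that forces $\depth\mathcal O_{\C,x}=1$; without this, $\mathcal I_{\C,x}$ could fail to have projective dimension $1$. Finally I would note that the conclusion $\pdim_{\mathcal O_{X,x}}\mathcal I_{\C,x}=1$ is precisely what is needed downstream: it says $\mathcal I_{\C,x}$ admits a length-one free resolution over $\mathcal O_{X,x}$, which is the local input to the Serre-correspondence construction of a rank $2$ ACM sheaf on $X$ (an extension of $\mathcal I_\C(a)$ by $\mathcal O_X$ that is locally free away from where the extension class degenerates).
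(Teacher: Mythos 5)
Your proof is correct and follows essentially the same route as the paper: both apply the Auslander--Buchsbaum formula over $\mathcal{O}_{X,x}$ (using that $X$ is a hypersurface, so $\depth(\mathcal{O}_{X,x})=3$) and reduce the statement to showing $\depth(\mathcal{I}_{\mathcal{C},x})=2$, which in turn rests on $\mathcal{O}_{\mathcal{C},x}$ being Cohen--Macaulay of dimension $1$. The only cosmetic difference is that the paper extracts this depth from an explicit local-cohomology computation using the sequence of ideal sheaves in $\D{P}^4$, whereas you apply the depth lemma directly to $0\to\mathcal{I}_{\mathcal{C},x}\to\mathcal{O}_{X,x}\to\mathcal{O}_{\mathcal{C},x}\to 0$ on $X$; these amount to the same long exact sequence.
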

\begin{proof} By Auslander\DM{-}Buchsbaum formula, we have:
	$$\DM{\pdim}(\mathcal{I}_{\mathcal{C},x})=\DM{\depth}(\mathcal{O}_{X,x})-\DM{\depth}(\mathcal{I}_{\mathcal{C},x}).$$
	$X$ is an AG scheme so $\DM{\depth}(\mathcal{O}_{X,x})=\DM{\dim\,}(\mathcal{O}_{X,x})=3.$ In order to determine $\DM{\depth}(\mathcal{I}_{\mathcal{C},x})$ we compute the local cohomology $H^i_{x}(\mathcal{I}_{\mathcal{C},x})$.
	Consider the short exact sequence of sheaves on $\D{P}^4$:
	$$0\longrightarrow \mathcal{I}_{X/\D{P}^4}\longrightarrow\mathcal{I}_{\mathcal{C}/\D{P}^4}\longrightarrow i_*(\mathcal{I}_{\mathcal{C}})\longrightarrow 0$$
	where $\mathcal{I}_{X/\D{P}^4}$ and $\mathcal{I}_{\mathcal{C}/\D{P}^4}$ denote respectively, the ideal sheaves of $X$ and $\mathcal{C}$ in $\D{P}^4$, and $i$ is the inclusion $i:X\hookrightarrow \D{P}^4$.
	Localizing at $x\in \mathcal{C}$ we get a short exact sequence of $\mathcal{O}_{\D{P}^4,x}-$modules: 
	\begin{equation}\label{ideals_loc}
	0\longrightarrow \mathcal{I}_{X/\D{P}^4,x}\longrightarrow\mathcal{I}_{\mathcal{C}/\D{P}^4,x}\longrightarrow (\mathcal{I}_{\mathcal{C},x})\longrightarrow 0.
	\end{equation}
	%quello che devo verificare è che H^0=0 ed  H^1 \ne 0.
	Since $H^0_{x}(\mathcal{O}_{\D{P}^4,x})=0$, we deduce that $H^0_{x}(\mathcal{I}_{X/\D{P}^4,x})=0$ and  $H^0_{x}(\mathcal{I}_{\mathcal{C}/\D{P}^4,x})=0$.
	Now, $H^i_{x}(\mathcal{I}_{X/\D{P}^4,x})\simeq H^{i-1}_{x}(\mathcal{O}_{X,x})=0$, for $i=1,\ldots, 3$ (as $\mathcal{O}_{X,x}$ is a local 3-dimensional CM ring).
	Similarly, $H^1_{x}(\mathcal{I}_{\mathcal{C}/\D{P}^4,x})\simeq H^0_{x}(\mathcal{O}_{\mathcal{C},x})=0$ (as $\mathcal{O}_{\mathcal{C},x}$ is a CM local ring of dimension 1).
	From these computations, we see that once we take the long exact sequence in local cohomology from (\ref{ideals_loc})\DM{,}
	
	\[
	\begin{tikzcd}[arrows=to]
	\cdots \rar & H^i_x(\mathcal{I}_{\C/\D{P}^4,x}) \rar &  H^i_x(\mathcal{I}_{\C,x}) \rar &  H^{i+1}_x(\mathcal{I}_{X/\D{P}^4,x}) \rar & \cdots\\
	\end{tikzcd}\]
	
	we get that: 
	\begin{itemize}
		\item  $H^0_{x}(\mathcal{I}_{\mathcal{C},x})=H^1_{x}(\mathcal{I}_{\mathcal{C},x})=0$\DM{,}
		\item $H^2_{x}(\mathcal{I}_{\mathcal{C},x})\simeq H^2_{x}(\mathcal{I}_{\mathcal{C}/\D{P}^4,x})\simeq H^1_{x}(\mathcal{O}_{\mathcal{C},x})\ne 0. $
	\end{itemize}
	These equalities \DM{imply} that $\DM{\depth}(\mathcal{I}_{\mathcal{C},x})=2$, hence $\DM{\pdim}(\mathcal{I}_{\mathcal{C},x})=1$.
	
\end{proof}
\begin{comment}
We will now illustrate how Serre's correspondence works on a (integral) cubic threefold \DM{$X$ (}for a more general description of Serre correspondence we refer to \cite{Gorenstein_liaison}). 
\end{comment}
\vspace{1mm}

Let's consider now \DM{ an AG curve $\mathcal{C}\subset X$.} The canonical sheaf of $\mathcal{C}$ is isomorphic to $\mathcal{O}_{\mathcal{C}}(a)$ for some integer $a \in \D{Z}$.
From the isomorphisms $$\omega_{\mathcal{C}}\simeq \sheafext^2(\mathcal{O}_C, \omega_X) \simeq \sheafext^2(\mathcal{O}_C, \mathcal{O}_X(-2))\simeq \sheafext^1(\mathcal{I}_{\mathcal{C}}, \mathcal{O}_X(-2))$$ we \DM{obtain}:
$$ \mathcal{O}_{C}\simeq \sheafext^1 (\mathcal{I}_{C}, \mathcal{O}_X(-2-a)).$$ 

% $$ E^{p,q}_2= H^p(X, \sheafext^q( \mathcal{O}_{\mathcal{C}}, \mathcal{O}_X(-2-a)))\Longrightarrow \DM{\Ext}^{p+q}(\mathcal{O}_{\mathcal{C}}, \mathcal{O}_X(-2-a))$$.
As $\mathcal{C}$ is a codimension 2 AG subscheme of $X$, \mbox{$\sheafext^i(\mathcal{O}_{\mathcal{C}}, \mathcal{O}_X)\simeq \sheafext^{i-1}(\mathcal{I}_{\mathcal{C}},\mathcal{O}_X)=0$}, \DM{whenever} $i\ne 2$; moreover, as $\sheafext^2(\mathcal{I}_{\mathcal{C}}, \mathcal{O}_X)$ is supported on $\mathcal{C}$, $H^i(X, \sheafext^1(\mathcal{I}_{\mathcal{C}}, \mathcal{O}_X))=0$ whenever $i\ne 0$ or $i\ne 1$. 
Therefore, applying the local to global spectral sequence (cf.\cite{Godement}. II Th.7.3.3), 
$$ E^{p,q}_2= H^p(X, \sheafext^q( \mathcal{O}_{\mathcal{C}}, \mathcal{O}_X(-2-a)))\Longrightarrow \DM{\Ext}^{p+q}(\mathcal{O}_{\mathcal{C}}, \mathcal{O}_X(-2-a))$$
we get isomorphisms: $$H^0(\mathcal{O}_{\mathcal{C}})\simeq H^0(X,\sheafext^1 (\mathcal{I}_{C}, \mathcal{O}_X(-2-a)))\simeq \DM{\Ext}^1( \mathcal{I}_{\mathcal{C}}, \mathcal{O}_X(-2-a)).$$  
The unit element $1 \in \DM{\Ext}^1( \mathcal{I}_{\mathcal{C}}, \mathcal{O}_X(-2-a))$ corresponds to a short exact sequence
\begin{equation}\label{ACM_Serre}
0\longrightarrow \mathcal{O}_X(-2-a)\longrightarrow \mathcal{N}\longrightarrow \mathcal{I}_{\mathcal{C}}\longrightarrow 0.
\end{equation}
\begin{prop} $\mathcal{N}$ is a rank 2 ACM sheaf.
\end{prop}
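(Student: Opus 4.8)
The plan is to show that the sheaf $\mathcal{N}$ defined by the extension (\ref{ACM_Serre}) is ACM of rank $2$ by verifying the two local conditions that characterize ACM sheaves on $\D{P}^4$: that $\mathcal{N}_x$ is a Cohen-Macaulay $\mathcal{O}_{\D{P}^4,x}$-module for every closed point $x$, and that the intermediate cohomology $H^i(\mathcal{N}(j))$ vanishes for $0<i<\dim\Supp(\mathcal{N})=3$ and all $j\in\D{Z}$. The rank statement is immediate: away from $\mathcal{C}$ the sequence (\ref{ACM_Serre}) reads $0\to\mathcal{O}_X(-2-a)\to\mathcal{N}\to\mathcal{O}_X\to 0$ (since $\mathcal{I}_{\mathcal{C}}$ agrees with $\mathcal{O}_X$ off $\mathcal{C}$), so $\mathcal{N}$ has rank $2$ as an $\mathcal{O}_X$-module, hence rank $2$ as a sheaf on $\D{P}^4$ supported on the threefold $X$.

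For the local Cohen-Macaulay condition, I would argue pointwise. If $x\notin\mathcal{C}$ then, as just noted, $\mathcal{N}_x$ is an extension of the CM module $\mathcal{O}_{X,x}$ by the CM module $\mathcal{O}_{X,x}(-2-a)$, both of dimension $3=\depth$, so $\depth(\mathcal{N}_x)=3=\dim(\mathcal{N}_x)$ and $\mathcal{N}_x$ is CM. If $x\in\mathcal{C}$, then by Proposition \ref{pd_1} we have $\pdim(\mathcal{I}_{\mathcal{C},x})=1$, so $\depth(\mathcal{I}_{\mathcal{C},x})=\depth(\mathcal{O}_{X,x})-1=2$ by Auslander-Buchsbaum; since $\mathcal{O}_X(-2-a)_x$ has depth $3$, the depth lemma applied to the short exact sequence $0\to\mathcal{O}_X(-2-a)_x\to\mathcal{N}_x\to\mathcal{I}_{\mathcal{C},x}\to 0$ gives $\depth(\mathcal{N}_x)\ge\min\{3,2\}=2$, while the long exact cohomology sequence shows $H^2_x(\mathcal{N}_x)\ne 0$ coming from $H^2_x(\mathcal{O}_X(-2-a)_x)=0$ and $H^2_x(\mathcal{I}_{\mathcal{C},x})\ne 0$ (the latter was computed in the proof of Proposition \ref{pd_1}). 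Hence $\depth(\mathcal{N}_x)=2$; but $\dim(\mathcal{N}_x)=3$, so this does \emph{not} make $\mathcal{N}_x$ CM as an $\mathcal{O}_{X,x}$-module of dimension $3$. The point is that we only need $\mathcal{N}$ to be a CM \emph{sheaf} away from the irrelevant ideal, and what matters is that $\pdim_{\mathcal{O}_{\D{P}^4,x}}(\mathcal{N}_x)=1$: indeed $\mathcal{O}_X(-2-a)_x$ has projective dimension $1$ over $\mathcal{O}_{\D{P}^4,x}$ (it is a hypersurface), $\mathcal{I}_{\mathcal{C},x}$ has projective dimension $2$ over $\mathcal{O}_{\D{P}^4,x}$ (from $\pdim_{\mathcal{O}_{X,x}}=1$ plus one more from the hypersurface), and a mapping cone / horseshoe argument on (\ref{ACM_Serre}) together with minimality (the extension class $1$ is not a zero divisor, so nothing splits off) yields $\pdim_{\mathcal{O}_{\D{P}^4,x}}(\mathcal{N}_x)=1$, i.e. $\depth_{\mathcal{O}_{\D{P}^4,x}}(\mathcal{N}_x)=\dim\mathcal{O}_{\D{P}^4,x}-1=3$, which is exactly $\dim(\mathcal{N}_x)$, so $\mathcal{N}_x$ is a CM $\mathcal{O}_{\D{P}^4,x}$-module.

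For the intermediate cohomology vanishing, I would take the long exact sequence in $H^\bullet(\D{P}^4,-)$ associated to the twist of (\ref{ACM_Serre}) by $\mathcal{O}(j)$:
\begin{equation*}
\cdots\to H^i(\mathcal{O}_X(j-2-a))\to H^i(\mathcal{N}(j))\to H^i(\mathcal{I}_{\mathcal{C}}(j))\to H^{i+1}(\mathcal{O}_X(j-2-a))\to\cdots
\end{equation*}
For $i=1,2$ the outer terms $H^i(\mathcal{O}_X(\ast))$ and $H^{i+1}(\mathcal{O}_X(\ast))$ vanish because $X$ is a hypersurface, hence ACM, so $H^1(\mathcal{N}(j))\cong H^1(\mathcal{I}_{\mathcal{C}}(j))$ and $H^2(\mathcal{N}(j))\cong H^2(\mathcal{I}_{\mathcal{C}}(j))$ (the latter up to the $H^3(\mathcal{O}_X)$ term, which also vanishes in the relevant range, or is handled directly). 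Now $H^i(\mathcal{I}_{\mathcal{C}}(j))=0$ for $i=1,2$ and all $j$: this is precisely the ACM-ness of the curve $\mathcal{C}$ inside $X$ (from $0\to\mathcal{I}_{\mathcal{C}}\to\mathcal{O}_X\to\mathcal{O}_{\mathcal{C}}\to 0$ and the ACM-ness of both $\mathcal{O}_X$ and $\mathcal{O}_{\mathcal{C}}$, using that $\mathcal{C}$ is AG hence ACM, cf. Remark \ref{projnormcurve}), together with one further twist of the local-to-global considerations already invoked. Therefore $H^i(\mathcal{N}(j))=0$ for $0<i<3$ and all $j$, which together with the local CM property established above proves that $\mathcal{N}$ is ACM.

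The main obstacle is the bookkeeping at the points $x\in\mathcal{C}$: one must be careful to distinguish ``CM as an $\mathcal{O}_{X,x}$-module'' (which fails, since $\depth=2<3=\dim$) from ``CM as an $\mathcal{O}_{\D{P}^4,x}$-module'' (which is what the definition of ACM sheaf requires, and which holds because $\pdim_{\mathcal{O}_{\D{P}^4,x}}\mathcal{N}_x=1$). Verifying $\pdim_{\mathcal{O}_{\D{P}^4,x}}(\mathcal{N}_x)=1$ cleanly is the delicate step: it follows from Proposition \ref{pd_1} by splicing the resolution of $\mathcal{I}_{\mathcal{C},x}$ over $\mathcal{O}_{\D{P}^4,x}$ with the length-one resolution of $\mathcal{O}_X(-2-a)_x$ and checking that the extension (\ref{ACM_Serre}), being the one corresponding to the unit in $\Ext^1$, produces no redundant free summand, so the resulting complex is a minimal free resolution of $\mathcal{N}_x$ of length $1$. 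Everything else is a routine diagram chase through long exact sequences.
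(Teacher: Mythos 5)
Two steps of your argument fail, and they are exactly the two places where the paper has to use the specific choice of the extension class (the unit $1\in H^0(\mathcal{O}_{\mathcal{C}})\simeq\Ext^1(\mathcal{I}_{\mathcal{C}},\mathcal{O}_X(-2-a))$) via the Hartshorne--Rao result. First, the global vanishing $H^2(\mathcal{N}(j))=0$: ACM-ness of $\mathcal{C}$ gives $H^1(\mathcal{I}_{\mathcal{C}}(j))=0$, but \emph{not} $H^2(\mathcal{I}_{\mathcal{C}}(j))=0$. From $0\to\mathcal{I}_{\mathcal{C}}\to\mathcal{O}_X\to\mathcal{O}_{\mathcal{C}}\to0$ and the ACM-ness of the hypersurface $X$ one gets $H^2(\mathcal{I}_{\mathcal{C}}(j))\simeq H^1(\mathcal{O}_{\mathcal{C}}(j))$, which is nonzero for all $j\ll0$ (and already for $j=0$ once $p_a(\mathcal{C})\ge1$, the case of interest later). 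Likewise $H^3(\mathcal{O}_X(j-2-a))\ne0$ for $j\ll0$. So your long exact sequence only identifies $H^2(\mathcal{N}(j))$ with the kernel of the connecting map $H^2(\mathcal{I}_{\mathcal{C}}(j))\to H^3(\mathcal{O}_X(j-2-a))$, i.e.\ cup product with the extension class, and the vanishing of that kernel is precisely what must be proved; for the split extension it is false, so no argument that ignores the class can work. The paper obtains it from $H^1_*(\mathcal{N}^{\vee})=0$ (Hartshorne--Rao 1.11 applied to the extension given by the unit) combined with local and Serre duality.

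Second, your local analysis at $x\in\mathcal{C}$ is internally contradictory. You first assert $H^2_x(\mathcal{N}_x)\ne0$, hence $\depth(\mathcal{N}_x)=2$, and then assert $\pdim_{\mathcal{O}_{\D{P}^4,x}}(\mathcal{N}_x)=1$, which by Auslander--Buchsbaum over the regular four-dimensional ring $\mathcal{O}_{\D{P}^4,x}$ forces $\depth(\mathcal{N}_x)=3$. Both cannot hold: depth and dimension are computed by local cohomology and do not change when a module over $\mathcal{O}_{X,x}$ is regarded over $\mathcal{O}_{\D{P}^4,x}$, so the distinction you draw between ``CM over $\mathcal{O}_{X,x}$'' and ``CM over $\mathcal{O}_{\D{P}^4,x}$'' does not exist. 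The correct statement is $H^2_x(\mathcal{N}_x)=0$, but this is again the kernel of a connecting map, $H^2_x(\mathcal{I}_{\mathcal{C},x})\to H^3_x(\mathcal{O}_{X,x})$, given by the local class $1_x$; its injectivity holds because $1_x$ generates $\sheafext^1(\mathcal{I}_{\mathcal{C}},\mathcal{O}_X(-2-a))_x\simeq\mathcal{O}_{\mathcal{C},x}$ --- exactly the ``nothing splits off'' step that you assert but never prove. The paper instead deduces $H^2_x(\mathcal{N}_x)=0$ from $\sheafext^1(\mathcal{N},\mathcal{O}_X)=0$ (Hartshorne--Rao again) by local duality over the Gorenstein ring $\mathcal{O}_{X,x}$. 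Your computation of the rank and the vanishing of $H^0_x(\mathcal{N}_x)$, $H^1_x(\mathcal{N}_x)$ and $H^1(\mathcal{N}(j))$ do agree with the paper.
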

\begin{proof}From (\ref{ACM_Serre}) we compute immediately that $\mathcal{N}$ has rank 2 and that the following hold:  
	\begin{itemize}
		\item $H^i_{x}(\mathcal{N}_x)=0$ for $i=0,1, \ \forall\: x \in X$. This is due to the fact that $H^i_x(\mathcal{O}_{X,x})=0$, and $H^i_x(\mathcal{I}_{\mathcal{C},x})=0$ for $i=0,1.$ %(questo è dato dal fatto che lo sono le cohomologie di $\mathcal{O}_{X,x}$ e $\mathcal{I}_{\mathcal{C},x}$). 
		\item $H^1(\mathcal{N}(j))=0, \ \forall j\in \D{Z}$. This is due to the fact that $\mathcal{C}$ is AG, hence $H^1_*(\mathcal{I}_{\mathcal{C}})=0.$
	\end{itemize}
	To conclude that $\mathcal{N}$ is ACM we still need to check that $H^2_{x}(\mathcal{N}_x)=0$ $\forall \ x \in X$ (so that $\mathcal{N}_x$ is Cohen-Macaulay $\forall x\in X$) and that $H^2(\mathcal{N}(j))=0, \ \forall j\in \D{Z}$.
	It is proved in \cite{Hartshorne_Rao} 1.11,  that the sheaf $\mathcal{N}$ obtained from an extension corresponding to $1 \in  \DM{\Ext}^1( \mathcal{I}_{\mathcal{C}}, \mathcal{O}_X(-2-a))$ satisfies:
	\begin{align*}
	\sheafext^1(\mathcal{N}, \mathcal{O}_X)=&0.\\
	H^1_*(\mathcal{N}^{\vee})=&0.
	\end{align*}
	This allow us to prove that $\mathcal{N}$ is ACM since:
	\begin{itemize}
		\item Applying local \DM{duality,} (cf. \cite{hart-loc}, ch. 6)  as $\mathcal{O}_{X,x}$ is  a local Gorenstein ring, we get: 
		$$H^2_{x}(\mathcal{N}_x)\simeq \Hom({\Ext}^1(\mathcal{N}_x, \mathcal{O}_{X,x}), \omega_{X,x}).$$ 
		This terms are thus both equal to 0 since  $\DM{\Ext}^1(\mathcal{N}_x, \mathcal{O}_{X,x})\simeq \sheafext^1 (\mathcal{N}, \mathcal{O}_X)_x$ and \mbox{$\sheafext^1 (\mathcal{N}, \mathcal{O}_X)=0$.}
		This implies that $\N$ is locally Cohen-Macaulay.
		\item From the previous point $\N$ is locally Cohen-Macaulay. Again, applying local duality we then deduce that $Ext^i_{\mathcal{O}_{X,x}}(\N_x,\mathcal{O}_{X,x})\simeq Ext^i_{\mathcal{O}_{X,x}}(\N_x,\omega_{X,x})=0$, \mbox{$ \forall\: x\in X$}, $ \forall\: i>0$, implying that $\sheafext^i(\N,\omega_X)=0, \ \forall \: i>0$. From the local to global spectral sequence:
		$$E^{p,q}_2=H^p(X,\sheafext^q(\N,\omega_X))\Longrightarrow Ext^{p+q}(\N,\omega_X),$$
		
		we obtain an isomorphism $Ext^i(\N,\omega_X)\simeq H^{i}(X,\sheafhom(\N,\omega_X))$. 
		By Serre's duality we get $Ext^i(\N,\omega_X)\simeq H^{n-i}(\N)^*$ and since $X$ is AG with canonical sheaf $\omega_X=\mathcal{O}_X(-2)$,
		we conclude that $H^{i}(\N^{\vee}(-2))$ is dual to $H^{n-i}(\N)$ for all $i>0$.
		Applying this argument to any twist of $\N$ we get that, for all $j \in \D{Z}$ and for $i=1$, $H^1(\N^{\vee}(-j-2))$ is dual to $H^2(\mathcal{N}(j))$. Since $H^1_*(\mathcal{N}^{\vee})=0$ we conclude that $H^2_*(\mathcal{N})=0$.
		
	\end{itemize}
\end{proof}

\DM{\subsection{Rank 2 Ulrich sheaves from normal AG elliptic quintics}}

Suppose now that $\mathcal{C}\subset X$ is an AG curve satisfying the following conditions:
\begin{itemize}
	\item $\mathcal{C}$ has Hilbert polynomial $P_{\C}(n)=5n$ ( so that $\C$ has arithmetic genus $p_a(\C)=1$, hence $\omega_{\mathcal{C}}\simeq \mathcal{O}_{\mathcal{C}})$;%questo mi dice che ha arithmetic genus 1 e degree 5
	\item $H^0(\mathcal{O}_{\C})\simeq \D{C}$;
	\item $\C$ is non-degenerate in $\D{P}^4$ (namely $\C$ spans the entire $\D{P}^4$).
	
\end{itemize} %having \DM{ arithmetic} genus $p_a(\C)=1$ (so that $\omega_{\mathcal{C}}\simeq \mathcal{O}_{\mathcal{C}})$ and degree 5. 
From now on we will refer to curves satisfying these properties as \textit{AG normal elliptic quintics}.	
%Suppose that moreover $\C$ is non-degenerate in $\D{P}^4$ (namely $\C$ spans the entire $\D{P}^4$).
Applying Serre's construction to $\mathcal{C}$ get the short exact sequence (\ref{ACM_Serre}) corresponding to the class of $1 \in H^0(\mathcal{O}_{\mathcal{C}})\simeq \DM{\Ext}^1 (\mathcal{I}_{\mathcal{C}}, \mathcal{O}_X(-2))$ and consequently a rank 2 ACM sheaf $\mathcal{N}$ on $X$. If now we twist (\ref{ACM_Serre}) by $\mathcal{O}_X(2)$, we get:
\begin{equation}\label{twisted}
0 \longrightarrow \mathcal{O}_X\longrightarrow \mathcal{E}\longrightarrow \mathcal{I}_{\mathcal{C}}(2)\longrightarrow 0
\end{equation}
where $\mathcal{E}:= \mathcal{N}(2)$. 
From it, since \mbox{$h^0(\mathcal{I}_{\mathcal{C}}(2))=h^0(\mathcal{O}_X(2))- h^0(\mathcal{O}_{\mathcal{C}}(2))= 15-10=5$}, we compute that $h^0(\mathcal{E})=6$. 
Moreover, from the assumption of non-degeneracy of $\C$ we have $h^0(\mathcal{I}_{\mathcal{C}}(1))=0$ so that $h^0(\mathcal{E}(-1))=0$.

From these fact and as we are assuming that $X$ is integral, we can conclude that $\mathcal{E}$ is Ulrich, due to:
\begin{prop}\label{nondeg_Ulrich}Let $X\subset \D{P}^n$ be an integral hypersurface of degree $d$, and $\mathcal{E}$ an ACM sheaf of rank $r$ such that $h^0(\mathcal{E}(-1))=0$ and $h^0(\mathcal{E})=rd$. Then $\mathcal{E}$ is Ulrich.
\end{prop}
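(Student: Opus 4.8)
The plan is to show directly that the minimal graded free resolution of the module of twisted global sections $E:=\bigoplus_{j\in\mathbb Z}H^0(\mathcal E(j))$ over $R=\mathbb C[X_0,\dots,X_n]$ is linear of the form $0\to R(-1)^{\oplus rd}\to R^{\oplus rd}\to E\to 0$; by Remark \ref{res_Ulrich} this is precisely what it means for $\mathcal E$ to be Ulrich. Since $\mathcal E$ is ACM and supported on $X$ (of dimension $n-1$), $E$ is a graded Cohen--Macaulay $R$-module of dimension $n$, so $\depth E=n\ge 2$; hence $E\cong\bigoplus_j H^0(\mathcal E(j))$ with $\dim_{\mathbb C}E_j=h^0(\mathcal E(j))$ for every $j$, in particular $E_{-1}=0$ and $\dim_{\mathbb C}E_0=rd$. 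A general linear form is a nonzerodivisor on $E$ (all associated primes of $E$ are minimal of dimension $n$ and none is the irrelevant ideal, as $\depth E\ge1$), so injectivity of multiplication by it from $E_{-1}$ to $E_0$ forces $E_j=0$ for all $j<0$; thus the minimal generators of $E$ lie in degrees $\ge 0$.

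Next I would cut down to $\mathbb P^1$. Pick $n-1$ sufficiently general linear forms $\ell_1,\dots,\ell_{n-1}$; by the remark just made, applied to $E$ and to its successive quotients (all Cohen--Macaulay), they form an $E$-regular sequence. Tensoring the minimal free resolution of $E$ with $\bar R:=R/(\ell_1,\dots,\ell_{n-1})\cong\mathbb C[s,t]$ is then exact (the higher $\mathrm{Tor}$'s vanish) and remains minimal, so $\bar E:=E/(\ell_1,\dots,\ell_{n-1})E$, a Cohen--Macaulay $\bar R$-module of dimension $1$, has exactly the same graded Betti numbers as $E$. It therefore suffices to prove that $\bar E$ has minimal free resolution $0\to\bar R(-1)^{\oplus rd}\to\bar R^{\oplus rd}\to\bar E\to0$, and for this I only need the Hilbert function of $\bar E$.

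That Hilbert function I would pin down as follows. Since $E_j=0$ for $j<0$ and $E_{-1}=0$, one gets $\bar E_j=0$ for $j<0$ and $\bar E_0=E_0$, so $\dim_{\mathbb C}\bar E_0=rd$. A general linear form $s\in\bar R_1$ is again a nonzerodivisor on $\bar E$, so multiplication by $s$ embeds $\bar E_j$ into $\bar E_{j+1}$ and the Hilbert function is nondecreasing. Finally the Hilbert polynomial of the $1$-dimensional module $\bar E$ is the constant $e(\bar E)=e(E)$, and $e(E)=r\cdot\deg X=rd$ since $\mathcal E$ is generically locally free of rank $r$ on the integral hypersurface $X$ of degree $d$ (equivalently, $rd$ is the leading coefficient of the Hilbert polynomial of $\mathcal E$, hence of $E$). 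Combining, $\dim_{\mathbb C}\bar E_j$ is nondecreasing, already equals $rd$ at $j=0$, and equals $rd$ for $j\gg0$; hence $\dim_{\mathbb C}\bar E_j=rd$ for all $j\ge0$ and $0$ for $j<0$. Then the minimal number of generators of $\bar E$ in degree $j$, namely $\dim_{\mathbb C}(\bar E/\mathfrak m\bar E)_j$, is $rd$ for $j=0$ and $0$ for $j\ge1$ (because $s\bar E_{j-1}\subseteq(\mathfrak m\bar E)_j$ already has dimension $rd=\dim_{\mathbb C}\bar E_j$), so $\bar E$ is minimally generated by $rd$ elements in degree $0$. Its first syzygy module is free of rank $rd$ (a second syzygy over the two-dimensional regular ring $\bar R$ is free), say $\bigoplus_{k=1}^{rd}\bar R(-b_k)$ with all $b_k\ge1$ by minimality, and comparing Hilbert series, $\frac{rd-\sum_k t^{b_k}}{(1-t)^2}=\frac{rd}{1-t}$ forces $\sum_k t^{b_k}=rd\cdot t$, i.e.\ $b_k=1$ for all $k$. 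This yields the desired linear resolution of $\bar E$, hence of $E$, and $\mathcal E$ is Ulrich.

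I expect the only genuinely delicate points to be organizational: verifying that reduction by a general linear form simultaneously preserves minimality of the resolution (equivalently, the graded Betti numbers) and the numerical data used — in particular that $\dim_{\mathbb C}\bar E_0=\dim_{\mathbb C}E_0$ and that $e(E)=rd$ — together with the standard fact that a general linear form is a nonzerodivisor on a Cohen--Macaulay module without embedded components. Once these are secured, computing the Hilbert function of $\bar E$ and reading off its resolution are immediate.
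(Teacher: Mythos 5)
Your argument is correct and complete. The paper itself gives no proof of this proposition: it simply cites Casanellas--Hartshorne, \emph{Stable Ulrich sheaves}, Lemma 2.2, so there is nothing internal to compare against; what you have written is essentially the standard proof of that lemma, carried out in full. The route --- observe that $E=\bigoplus_j H^0(\mathcal E(j))$ is a graded CM module of dimension $n$ with $E_j=0$ for $j<0$, cut by a regular sequence of $n-1$ general linear forms (which preserves graded Betti numbers and multiplicity), and then read off the linear resolution of the resulting $1$-dimensional module over $\mathbb C[s,t]$ from its Hilbert function --- is sound, and every step you flag as delicate (nonzerodivisor by prime avoidance on an unmixed CM module, exactness and minimality of the reduced resolution via $\mathrm{Tor}_1^R(E,R/\ell)=0$, $e(E)=rd$ from the generic rank on the integral $X$, freeness of the first syzygy over $\bar R$ via Auslander--Buchsbaum) is indeed standard and holds. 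Two cosmetic remarks: the vanishing of $E_j$ for $j<-1$ comes from injectivity of $E_j\to E_{j+1}$ for $j\le -2$ terminating at $E_{-1}=0$, not from the map $E_{-1}\to E_0$ as you wrote; and the inequality $\depth E=n\ge 2$ tacitly assumes $n\ge 2$, which is harmless since that is the only case of interest (and the only one in which ``integral hypersurface of degree $d$'' is nontrivial). The value of your version over the paper's is self-containedness; the cost is about a page of commutative algebra that the citation avoids.
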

\begin{proof} See \cite{stable_Ulrich}, Lemma 2.2. 
\end{proof}

From what we have explained until now, whenever we are able to prove that an integral cubic threefold $X$ contains an AG elliptic quintic $\mathcal{C}$ spanning the entire projective space $\D{P}^4$, we get the existence of a rank 2 Ulrich sheaf $\mathcal{E}$ on $X$.

\DM{Now we want to locate} the singular points of $\E$, namely those points $x\in X$ where $\E_x$ (or equivalently $\mathcal{N}_x$) is not a free $\mathcal{O}_{X,x}$-module, as follows.
We start by localizing (\ref{ACM_Serre}) at a point $x\in X$, getting in this way a short exact sequence of $\mathcal{O}_{X,x}$ modules:
\begin{equation}\label{Serre_local}
0 \longrightarrow \mathcal{O}_{X,x}\longrightarrow \mathcal{N}_x\longrightarrow \mathcal{I}_{\mathcal{C},x}\longrightarrow 0.
\end{equation}
Note that this short exact sequence corresponds to the class $1_x \in \DM{\Ext}_{\mathcal{O}_{X,x}}^1(\mathcal{I}_{\mathcal{C},x}, \mathcal{O}_{X,x})$.
It's clear that $\forall\: x \notin \C$, $\mathcal{N}_x$ is free. Indeed if $x\notin \C$, we have $\mathcal{I}_{\C,x}\simeq \mathcal{O}_{X,x}$, implying that \ref{Serre_local} splits and that thus $\mathcal{N}_x\simeq \mathcal{O}^{\oplus 2}_{X,x}$.
Hence if $x$ is a singular point of $\E$, we must have $x \in \C$. 
Since by proposition \ref{pd_1}, $\mathcal{I}_{\C, x}$ is a $\mathcal{O}_{X,x}$-module of projective dimension 1, we can check if $x$ is a singular point of $\E$ applying the following proposition, proved by Serre (see \cite{Okonek}, Lemma 5.1.2 ):

\begin{prop}[(Serre)]\label{Serre} Let $A$ be a Noetherian local ring, $I\subset A$ an ideal admitting a free resolution of length 1:
	$$0\longrightarrow A^p\longrightarrow A^q \longrightarrow I\longrightarrow 0.$$
	Let $e \in \DM{\Ext}^1(I,A)$ be represented by the extension
	$$0\longrightarrow A\longrightarrow M\longrightarrow I\longrightarrow 0.$$
	Then $M$ is a free $A$-module if and only if $e$ generates the $A$-module $\DM{\Ext}^1(I,A)$.
\end{prop}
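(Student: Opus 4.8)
The plan is to read everything off the given length-one free resolution of $I$, convert the statement ``$M$ is free'' into a surjectivity statement about a dualized map, and then match that surjectivity with the condition on $e$. First I would make $\Ext^1(I,A)$ explicit: applying $\Hom(-,A)$ to $0\to A^p\xrightarrow{\phi}A^q\to I\to 0$ and using that finitely generated free modules have vanishing higher $\Ext$ into $A$, one obtains an exact sequence
$$0\longrightarrow \Hom(I,A)\longrightarrow (A^q)\dual \xrightarrow{\phi\dual} (A^p)\dual \longrightarrow \Ext^1(I,A)\longrightarrow 0,$$
so that $\Ext^1(I,A)\cong\coker(\phi\dual)$; concretely, an element of $\Ext^1(I,A)$ is a homomorphism $\psi\colon A^p\to A$ taken modulo the image of $\phi\dual$.

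Next I would realize the extension $e$ as a pushout. Since $A^q$ is free, the connecting homomorphism $\Hom(A^p,A)\to\Ext^1(I,A)$ attached to the resolution is surjective; it sends $\psi\colon A^p\to A$ to the Yoneda class of the pushout of $0\to A^p\xrightarrow{\phi}A^q\to I\to 0$ along $\psi$, and under the isomorphism of the previous step this class is exactly $[\psi]\in\coker(\phi\dual)$. So I may fix $\psi$ with $e=[\psi]$; then $M\cong(A\oplus A^q)/\theta(A^p)$, where $\theta=(\psi,-\phi)\colon A^p\to A\oplus A^q$. Because $\phi$ is injective so is $\theta$, hence $M$ carries a free resolution
$$0\longrightarrow A^p\xrightarrow{\theta} A\oplus A^q\longrightarrow M\longrightarrow 0$$
of length at most one.

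Now the homological core. Over the Noetherian local ring $A$ a finitely generated module is free iff it is projective, so $M$ is free iff the displayed sequence splits, i.e.\ iff $\theta$ is a split monomorphism. Dualizing (finitely generated free modules are reflexive), $\theta$ is a split monomorphism iff $\theta\dual\colon (A\oplus A^q)\dual\to (A^p)\dual$ is a split epimorphism; and since $(A^p)\dual$ is free, $\theta\dual$ is a split epimorphism iff it is merely surjective. Finally $\theta\dual=(\psi\dual,-\phi\dual)$, so $\theta\dual$ is surjective iff $\im(\psi\dual)+\im(\phi\dual)=(A^p)\dual$, i.e.\ iff the class of $\psi\dual$ generates $\coker(\phi\dual)\cong\Ext^1(I,A)$. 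As that class is precisely $e$, this reads: $M$ is free iff $e$ generates $\Ext^1(I,A)$, which is the assertion.

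The step I expect to take the most care is the middle one, namely identifying the concrete generator $[\psi]$ produced by the connecting map of the chosen resolution with the abstract extension class $e$ — in other words, checking that the resolution-based description of $\Ext^1(I,A)$ is compatible with the Yoneda description — together with verifying that the two-term complex for $M$ is exact on the left, which, conveniently, uses only the injectivity of $\phi$. The remaining ingredients are the standard facts that finitely generated projective modules over a local ring are free and that split exact sequences of finitely generated free modules dualize to split exact sequences, both of which are routine.
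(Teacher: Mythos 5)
Your argument is correct and complete. Note that the paper itself gives no proof of this proposition: it is stated with a pointer to \cite{Okonek}, Lemma 5.1.2, so there is no internal argument to compare against, and what you have written is essentially a self-contained version of the standard proof from that reference. The chain of reductions is sound: dualizing the length-one resolution identifies $\Ext^1(I,A)$ with $\coker(\phi\dual)$; the surjectivity of the connecting map $\Hom(A^p,A)\to\Ext^1(I,A)$ (from $\Ext^1(A^q,A)=0$) lets you realize $e$ as the pushout class of some $\psi$, giving the presentation $0\to A^p\xrightarrow{\theta}A\oplus A^q\to M\to 0$ with $\theta=(\psi,-\phi)$; and freeness of $M$ is equivalent to this sequence splitting (finitely generated projective modules over a local ring being free), hence to surjectivity of $\theta\dual$, i.e.\ to $A\psi+\im(\phi\dual)=(A^p)\dual$, which is precisely the statement that $e=[\psi]$ generates $\coker(\phi\dual)\cong\Ext^1(I,A)$. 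The two points you single out as delicate — compatibility of the connecting homomorphism with the Yoneda/pushout description of $\Ext^1$, and left-exactness of the two-term complex for $M$ (which indeed needs only the injectivity of $\phi$) — are exactly the right ones, and both are handled correctly.
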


\begin{rmk}\label{Ulrich_lf} We observe that if ever $\E$ is locally free, then it is a skew-symmetric Ulrich bundle of rank 2. 
	This is due to the fact that if $\mathcal{E}$ is a vector bundle, it is endowed with a ``natural'' skew-symmetric form $\phi$, $\phi: \mathcal{E}\to \sheafhom(\mathcal{E}, \mathcal{O}_X(2))$. $\phi$ is just the natural isomorphism:
	$$\phi: \mathcal{E} \xrightarrow{\simeq} \mathcal{E}^{\vee} \otimes \bigwedge ^2 \mathcal{E}$$
	that on each fibre $x\in X$ acts as $\phi_x: \mathcal{E}_x \xrightarrow{\simeq} \mathcal{E}_x^{\vee} \otimes \bigwedge ^2 \mathcal{E}_x$, \DM{$ s\mapsto(t\mapsto s\wedge t)$.}
	Furthermore, whenever $\mathcal{E}$ is locally free, from (\ref{twisted}) we can compute that $\DM{\deg\,}(\mathcal{E})=2$, getting, finally, $$\phi: \mathcal{E} \xrightarrow{\simeq} \mathcal{E}^{\vee} \otimes \bigwedge ^2 \mathcal{E}\simeq \sheafhom(\mathcal{E}, \bigwedge^2 \mathcal{E})\simeq \sheafhom(\mathcal{E}, \mathcal{O}_X(2)).$$
	From this remark we understand that the existence of a rank 2 Ulrich bundle on $X$ is sufficient to conclude that $X$ is Pfaffian.
	In the next section we will show how to obtain, on certain singular cubic threefolds, rank 2 Ulrich sheaves that are, for construction, locally free.
	Anyway note that conversely, the existence of a Pfaffian representation of a cubic $X$ does not ensure the existence of a Ulrich \textit{bundle} on $X$; we can indeed have Pfaffian representations corresponding to skew-symmetric Ulrich sheaves that are not necessarily locally free. Anyway in this article we won't deal with these cases.
\end{rmk}

\subsubsection*{The smooth case}Whenever $X$ is smooth, it was proved in \cite{Dima-Tikho}, \cite{B1}, that there always exists a smooth normal elliptic quintic and that the sheaf bulit from it by Serre correspondence is a rank 2 Ulrich bundle.
There are essentially two different ways to prove the existence of normal elliptic quintics on $X$. 
The first is to prove that there exists a smooth elliptic quintic contained in a hyperplane section and then show that it deforms to a \DM{nondegenerate} one, \DM{in the sense that it spans the whole projective space $\mathbb P^4$.}
The second method is ``constructive'' and \DM{produces} directly a \DM{nondegenerate} elliptic quintic. The curve is obtained as the residual \DM{one} \DM{to} a rational \DM{normal} quartic $\C_4 \subset X\cap S$  \DM{in $X\cap \Sigma$ , where $\Sigma$ is a cubic scroll.}
We will later describe in greater \DM{detail} these techniques showing how we can adapt them to singular cases.

\

Next sections are devoted to the study of normal AG elliptic quintics on normal cubic threefolds that present at most double points.
Adapting the two methods used in the smooth case we prove the following:
\begin{thm}\label{normal_quintic} Let $X$ be a normal cubic threefold that is not a cone. Then there exists a normal AG elliptic quintic curve $\C\subset X$.
\end{thm}
We will then show that a normal AG elliptic quintic $\C\subset X$ obtained from both our methods yields, applying Serre correspondence, a rank 2 Ulrich sheaf $\E$ on $X$ that furthermore is locally free:
%We show here that $X$ is always Pfaffian %and we do this 
%proving the existence of a rank 2 Ulrich bundle on $X$:
\begin{thm}\label{Ulrich-normal} Let $X$ be a normal cubic threefold that does not contain triple points. Then there always exists a rank 2 Ulrich bundle $\E$ on $X$.
\end{thm}
From this result we deduce immediately the Pfaffian representability of $X$ since, as we saw in remark \ref{Ulrich_lf},  a rank 2 Ulrich bundle is always skew-symmetric.
\begin{comment}
The first step to show the existence of $\E$ is to show that on $X$ there always exists a non-degenerate AG quintic elliptic curve.
We will prove the following:

\begin{thm}\label{normal_quintic} Let $X$ be a normal cubic threefold that is not a cone. Then there exists a normal AG elliptic quintic curve $\C\subset X$.
\end{thm}

The Ulrich bundle $\E$ is then obtained from $\C$ by Serre correspondence.

We will exhibit two different methods to prove the existence of the AG elliptic quintic $\C$. 
The first method relies on a deformation argument and allows us to prove that  on certain normal threefold $X$ there always exists a \textit{smooth} normal elliptic quintic; the second method is constructive and produce directly $AG$ ellitpic quintics. 
\end{comment}
Before illustrating the proofs of theorems \ref{Ulrich-normal} and \ref{normal_quintic} we present, following Segre's work \cite{Segre}, a brief essay on normal cubic threefolds.

\section{Normal cubic threefolds}
\begin{comment}
Let $X$ be a normal cubic threefold that does not contain triple points.
We show here that $X$ is always Pfaffian %and we do this 
proving the existence of a rank 2 Ulrich bundle on $X$:
\begin{thm}\label{Ulrich-normal} Let $X$ be a normal cubic threefold that does not contain triple points. Then there always exists a rank 2 Ulrich bundle $\E$ on $X$.
\end{thm}
From this result we deduce immediately the Pfaffian representability of $X$ since, as we saw in remark \ref{Ulrich_lf},  a rank 2 Ulrich bundle is always skew-symmetric.
The first step to show the existence of $\E$ is to show that on $X$ there always exists a non-degenerate AG quintic elliptic curve.
We will prove the following:

\begin{thm}\label{normal_quintic} Let $X$ be a normal cubic threefold that is not a cone. Then there exists a normal AG elliptic quintic curve $\C\subset X$.
\end{thm}

The Ulrich bundle $\E$ is then obtained from $\C$ by Serre correspondence.
We will exhibit two different methods to prove the existence of the AG elliptic quintic $\C$. 
The first method relies on a deformation argument and allows us to prove that  on certain normal threefold $X$ there always exists a \textit{smooth} normal elliptic quintic; the second method is constructive and produce directly $AG$ ellitpic quintics. 

Before illustrating the proofs of theorems \ref{Ulrich-normal} and \ref{normal_quintic} we present, following Segre's work \cite{Segre}, a brief essay on normal cubic threefolds.
\end{comment}

Let $X$ be a normal cubic threefold that does not contain triple points.
In this section we will describe the singularities that $X$ might present; before doing so we recall some generalities about singular hypersurfaces.% We describe here the singularities that $X$ might present.

If $X$ \DM{presents} a singular point $x\in X$, we might then choose homogeneous coordinates $X_0, \ldots, X_4$ on $\D{P}^4$ in such a way that $x$ is the point $[1:0:0:0:0]$; consequently the polynomial $F$ that defines $X$ can be written as:
$$F(X_0,\dots, X_4)= X_0 q_x(X_1,\dots, X_4)+ c_x(X_1,\dots, X_4).$$
The locus $Q_x$ defined by the equation $q_x(X_1,\dots X_4)=0$ is a quadric hypersurface (singular in $x$) called the \textit{quadric tangent cone} of $X$ at $x$. It can be characterized as the set of points  \mbox{$Q_x=\{y \in \D{P}^4 \mid \ mult_{x}(F\restriction_{\overline{xy}})\ge 3\}.$}
\begin{defn}We say that the point $x\in X$ is a double point of $r$-th type if $Q_x$, the quadric tangent cone to $X$ at $x$ is a quadric of rank $5-r$. 
\end{defn}
In the upcoming sections we will also refer to a double point $x\in X$ of \DM{$r$-th} type, for $r=3,2,1$ as, respectively, a conic node, a binode and a unode.

Take now a hyperplane $H_x\subset \D{P}^4$ not passing through $x$. Up to a suitable change of coordinates we can assume that $H_x$ has equation $X_0=0$. We consider the locus $\mathcal{D}^{2,3}_x\subset H_x$ defined as:
$$ \mathcal{D}^{2,3}_x:= H_x\cap Q_x \cap X.$$
This is a complete intersection \DM{sextic} curve and we see that for every point $y \in \mathcal{D}^{2,3}_x$, the polynomial $F\restriction_{\overline{xy}}$ has a root of multiplicity at least 3 in $x$ and a root in $y$. Since $\DM{\deg\,}(F)=3$, we deduce that the entire line $\overline{xy}$ is contained in $X$. 
This means that the union of lines passing through $x$ and contained in $X$ is the cone of vertex $x$ over \DM{$\mathcal{D}^{2,3}_x$}. 
Note also that $Q_x\cap H_x$ and $X\cap H_x$ meet at a point $y \in \mathcal{D}^{2,3}_x$ with multiplicity bigger then one if and only if the line $\overline{xy}\subset X$ passes through a singular point of $X$ different from $x$.

Supposing that $X$ is normal, we have that \DM{the} singular locus $\DM{\Sing}(X)$, \DM{has} dimension at most 1. 
If ever $X$ has non-isolated singularities, $\DM{\Sing}(X)$ \DM{contains} then a curve $Y$. Consequently we give the following definition: 
\begin{defn} We say that a curve $Y\subset \DM{\Sing}(X)$ is a \textit{double curve of r-th type} if every point in $Y$ is a double point and the generic point of each of its irreducible components is a double point of \DM{$(r-1)$-th} type.
\end{defn}
Under the additional assumption that $X$ is not a cone (namely that $X$ does not contain triple points), following Segre's classification of \DM{three-dimensional} cubic hypersurfaces \DM{(\cite{Segre})}, we \DM{conclude} that
$\DM{\Sing}(X)$ might \DM{consist} of:
\begin{itemize} 
	\item $N\le 10$ isolated singular points if $\DM{\dim\,}(\DM{\Sing}(X))=0$ (\DM{including the case when} $\DM{\Sing}(X)=\emptyset$).
	\item The union $Y\cup Z$ of a finite length scheme $Z$ (including the case when $Z=\emptyset$) and a \DM{double curve} $Y$, \DM{if} $\DM{\dim\,}(\DM{\Sing}(X))=1$, \DM{where $Y$ is one of the following:}
	\begin{itemize}
		\item A line of first or second type.
		\item A conic of first or second type (possibly degenerating \DM{into a} union of two incident lines).
		\item The union of three non-coplanar lines of first type meeting at a point.
		\item A rational quartic curve of first type (that can possibly degenerate in the union of two conics meeting at a point but not both contained in a hyperplane).
	\end{itemize}
\end{itemize}

\begin{rmk}\label{forma-poly}
	Note that saying that $X$ has multiplicity 2 along the curve $Y\subset X$, means that all the partial derivatives of order 1 of $F$ vanish along $Y$. In other words, denoting by $I_Y\subset \D{C}[X_0,\dots,X_4]$ the homogeneous ideal defining $Y$, we have that
	for $i=0,\dots, 4$, $\frac{\partial F }{\partial X_i} \in I_Y$. This implies in particular that the polynomial $F$ can be written as \mbox{$F=\sum_{i=0}^4 X_i Q_i$} with $Q_i\in {I_Y}_2:= (I_Y\cap \D{C}[X_0,\dots X_4]_2)$ (the space of homogeneous forms of degree 2 belonging to $I_Y$). The hypothesis that $X$ has no triple points ensures that $\forall\: y \: \in Y$, there exists at least one partial derivative of $F$ of order 2 not vanishing at $y$.   
\end{rmk}

\

In the upcoming sections we will need to study the behavior of (general) hyperplane sections of $X$ and to this aim it is necessary to understand the kind of singularities that they might present. If ever $X$ has isolated singularities, applying Bertini's theorem we have that for a general hyperplane $H\subset \D{P}^4$, 
$S:=H\cap X$ is smooth. Whenever $\dim(\Sing(X))=1$, a generic hyperplane section of $X$ is a cubic surface with isolated singularities.
More precisely, taking a hyperplane $H\subset \D{P}^4$ corresponding to a point $h \in {\D{P}^4}^*$ that belongs to the open ${\D{P}^4}^*\setminus (X^*)$ (here $X^*$ is the dual variety of $X$), the intersection $H\cap X$ will be singular along $\DM{\Sing}(X)\cap H$.  
When $X$ is a cubic threefold in the aforementioned list, following Segre's classification, we have at most a finite number of singular points of $X$ not belonging to the curve $Y$.
This means that for a general hyperplane $H\subset \D{P}^4$, the intersection $S:=H\cap X$ satisfies $\DM{\Sing}(S)=Y\cap H$, therefore $S$ is a cubic surface with at most 4 double points.
In order to define the type of these singularities it is necessary to describe in greater detail the quadric tangent cones to $X$ at points belonging to 1-dimensional components of $\Sing(X)$. This is done writing down a normal form for the polynomial $F$ and with the aid of Segre's work. Our study of the singularities of $S$ relies on Bruce and Walls classification of cubic surfaces presented in \cite{Bruce_Wall}.

\subsection{Cubic threefolds singular along a line}

We suppose here that $X$ contains a double line $Y$ supported on the line $\overline{Y}=\nu_1(\D{P}^1)$ where $\nu_1$ is defined as:

\begin{align*}
\nu_1: \D{P}^1 \longrightarrow& \D{P}^4\\
[t_0:t_1]\mapsto& [0:0:0:t_0:t_1]. 
\end{align*}
For an appropriate choice of homogeneous coordinates $X_0, \ldots, X_4$ we can assume that $\overline{Y}$ has equations $\{X_0=X_1=X_2=0\}$. 
We call $U:=\D{C}\langle X_0,\dots X_4\rangle, \ U':=\D{C}\langle X_0,\: X_1,\: X_2\rangle,$ \mbox{$ U'':=\D{C}\langle X_3,X_4\rangle$.}
We get a decomposition $$ S^3(U)=\bigoplus_{i=0}^3 S^i(U')\otimes S^{3-i}(U'')$$ and consequently $F$ can be written as 
$$F=\sum _{i=0}^3 F_i, \hspace{3mm} F_i \in S^i(U')\otimes S^{3-i}(U'')$$
Because of the fact that $X$ has multiplicity 2 along $\overline{Y}$, we have $F_0=0, \ F_1=0$ hence (since moreover we are supposing that $X$ has no triple points) $F$ can be reduced to the following form:%nb. il fatto che compaiano sia X_3 che X_4 è dato dal fatto che altrimenti avrei un punto triplo

$$F(X_0,\dots X_4)=X_3 q_3(X_0,X_1,X_2)+X_4q_4(X_0,X_1,X_2) + c(X_0,X_1,X_2). $$

with $\DM{\deg\, (}q_3(X_0,X_1,X_2))=\DM{\deg\, (}q_4(X_0,X_1,X_2))=2$ and $\DM{\deg\, (}c(X_0,X_1,X_2))=3$.  

The quadric tangent cones to points in $Y$ draw a \DM{pencil} of quadrics $\mathcal{Q}_{Y}\subset |\mathcal{O}_{\D{P}^4}(2)|$. The quadric tangent cone to the point $[t_0:t_1]\in \overline{Y}$ is the element $q_{[t_0:t_1]}\in \mathcal{Q}_Y$ defined by:

\begin{equation}\label{pencil_qc}
q_{[t_0:t_1]}= t_0 q_3(X_0,X_1,X_2) + t_1 q_4(X_0,X_1,X_2).
\end{equation}
We observe that every element of $\mathcal{Q}_Y$ is singular along the line $\overline{Y}$, hence it has rank less \DM{than} or equal to 3. 
We recognize here two subcases:
\begin{itemize}
	\item A general element in $\mathcal{Q}_Y$ has rank 3 (namely the line $Y$ is of \textit{first type});
	\item Every quadric in $\mathcal{Q}_Y$ has rank at most 2 (namely the line $Y$ is of \textit {second type}).
\end{itemize}

\

\textbf{$Y$ is a line of first type}

If $Y$ is a line of first type, we see from (\ref{pencil_qc}), that on $Y$ we have 3 binodes $x_1, \ x_2, \ x_3$ (these correspond to the three points where the \DM{pencil} $\mathcal{Q}_Y$ meets the locus of quadrics of rank \DM{less than} 3). %esatto perchè sto tracciando un pencyl di coniche!! questi sono tutti cones su coniche nello stesso piano
\begin{prop} Let $X$ be a cubic threefold singular along a line $Y$ of first type. Then a general hyperplane section of $X$ is a cubic surface with one $A_1$ singularity.
\end{prop}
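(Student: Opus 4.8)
The strategy is to take a general hyperplane $H \subset \D{P}^4$ and analyze the surface $S = H \cap X$, showing that its only singularity is a single $A_1$ point. We already know from the discussion preceding the statement that, for general $H$, $\Sing(S) = Y \cap H$, which (since $Y$ is supported on the line $\overline{Y}$) consists of a single point $x_0 = H \cap \overline{Y}$. So the real content is: (i) $S$ is indeed singular at $x_0$, and (ii) the singularity at $x_0$ is of type $A_1$, i.e.\ an ordinary double point (a conic node of the surface), and no worse.

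For step (i), since $X$ has multiplicity $2$ along $\overline{Y}$, the partial derivatives of $F$ vanish at every point of $\overline{Y}$ (Remark \ref{forma-poly}); restricting to the hyperplane $H$ passing through $x_0$ shows the partials of $F|_H$ vanish at $x_0$, so $x_0 \in \Sing(S)$. For step (ii), I would work with the local model: the tangent cone to $X$ at a general point $x_0 \in \overline{Y}$ is the quadric $Q_{x_0}$ cut out by the pencil element $q_{[t_0:t_1]}$ of \eqref{pencil_qc}; since $Y$ is of first type, a general such $q_{[t_0:t_1]}$ has rank $3$, so $x_0$ is a \emph{binode} of $X$ (a double point of second type). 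The tangent cone to the surface $S = H \cap X$ at $x_0$ is then $Q_{x_0} \cap H$, a conic in $H \cong \D{P}^3$ cut on a rank-$3$ quadric cone by a general hyperplane through its vertex $x_0$; such a hyperplane section is a rank-$3$ conic (a pair of distinct lines), hence the tangent cone of $S$ at $x_0$ is a nondegenerate plane conic, which is precisely the statement that $x_0$ is an $A_1$ (ordinary double) point of $S$. One must check that $H$ general avoids the finitely many special positions: the three binodes $x_1,x_2,x_3$ where the pencil meets the rank-$\le 2$ locus, and the points where the quadric tangent cone meets $X$ with excess multiplicity (i.e.\ where a line of $X$ through $x_0$ passes through another singular point); this is a codimension-$\ge 1$ condition in $H \in {\D{P}^4}^*$, so a general $H$ avoids it. Appealing to Bruce--Wall's classification \cite{Bruce_Wall}, a cubic surface whose only singularity has a nondegenerate conic as tangent cone is one with a single $A_1$ node, which finishes the proof.

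The main obstacle I expect is the careful bookkeeping in step (ii): one must verify that for a \emph{general} $H$ through a \emph{general} point of $\overline Y$, the relevant conic really is nondegenerate, which amounts to checking that the general member $q_{[t_0:t_1]}$ of the pencil, intersected with a general hyperplane through its vertex, does not degenerate further. Concretely, in the normal form $F = X_3 q_3 + X_4 q_4 + c$ with $q_3, q_4$ quadrics in $X_0,X_1,X_2$, one restricts to $H = \{X_4 = \lambda X_3 + (\text{linear in } X_0,X_1,X_2)\}$ or similar, passes to affine coordinates centered at $x_0$, and reads off the degree-$2$ part; the type-one hypothesis (general rank $3$ in the pencil) is exactly what guarantees this degree-$2$ part is a rank-$3$ ternary form. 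I would also double-check, using the line geometry of $\mathcal D^{2,3}_{x_0}$ discussed earlier, that the node does not secretly worsen to an $A_2$ or higher because of a line of $X$ through $x_0$ meeting $Y$ again—this is ruled out for general $H$ since such coincidences happen only over a proper closed subset of $\overline Y$.
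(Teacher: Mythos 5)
Your argument is correct and follows essentially the same route as the paper: for a general $H$ meeting $\overline{Y}$ in one point away from the three binodes and not containing $\overline{Y}$, the rank-$3$ quadric tangent cone of $X$ at that point restricts to a rank-$3$ tangent cone of $S$, i.e.\ a conic node, hence an $A_1$ singularity. One small slip: your parenthetical describing a rank-$3$ conic as ``a pair of distinct lines'' is incorrect (that would be rank $2$), but the clause that actually carries the argument --- the projectivized tangent cone of $S$ at $x_0$ is a nondegenerate conic --- is the right criterion and is what the type-one hypothesis delivers.
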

\begin{proof}
	We know that for $H$ general, the cubic surface $S:=H\cap X$ is singular along $Y\cap H$. Whenever such a hyperplane $H$ meets the line $Y$ in a point $y_0\ne x_i$ for $i=0,1,2$ (again, all these are ``open conditions'' on ${\D{P}^4}^*$), 
	the resulting intersection $S$ is a cubic surface whose only singular point is $y_0$. The condition that $Y\cap H=\{y_0\}$ implies that $Y\not\subset H$, so that the quadric tangent cone to $S$ at $y_0$ is defined by a quadric of rank 3 (singular just in $y_0$), hence $y_0$ is a conic node.
	$S$ has thus an $A_1$ singularity in $y_0$. 
\end{proof}

\textbf{$Y$ is a line of second type}

If the line $Y$ is of second type, a general element $y_0\in Y$ is a binode; every quadric in the \DM{pencil} $\mathcal{Q}_Y$ is singular along a plane \DM{and is a pair of linear spaces $\mathbb P^3$}.
For a suitable change of coordinates we might suppose that $y_0=[0:0:0:0:1]$, so that $F$ can be written as:%posso fare un cambiamento di coordinate sulla retta che me lo mandi in 0:1 e poi un cambiamento di coord sul piano che mi mandi il cono tangente in X_0X_1
$$F=X_4 q_4(X_0,X_1) + c'(X_0,X_1,X_2,X_3).$$
Consider $H_{y_0}\subset \D{P}^4$ a hyperplane orthogonal to $y_0$, that we can assume having equation $\{X_4=0\}.$ 
All lines passing through $y_0$ are of the form $\overline{y_0x}$, where $x$ varies along the sextic curve $\mathcal{D}^{2,3}_{y_0}\subset H_{y_0}$:
$$\mathcal{D}_{y_0}^{2,3}=\{X_4=q_4(X_0,X_1)=c'(X_0,X_1,X_2,X_3)=0\}.$$
This curve is not irreducible but \DM{is} the union of two plane cubic curves. Indeed, denote by $S_{y_0}$ the intersection $X\cap H_{y_0}$ and by $Q_{y_0}$ the quadric defined by the equations \mbox{$\{X_4=0, \ q_4(X_0,X_1)=0\}$.}
$Q_{y_0}$ is a union of two planes $\Delta_0, \ \Delta_1$, so that $\mathcal{D}_{y_0}^{2,3}$ is equal to $(S_{y_0} \cap \Delta_0)\cup (S_{y_0}\cap \Delta_1)$ .
This also allows us to notice that $X$ can have at most one other singular \DM{point}.
That's because all singular points of $X$ lie on lines $\overline{y_0x}$\DM{,  where $x$ is} a point in $\mathcal{D}_{y_0}^{2,3}$ where \DM{$S_{y_0}$ and $Q_{y_0}$} meet with multiplicity \DM{greater than} one. As $Q_{y_0}=\Delta_0\cup \Delta_1$, we deduce that all singular points must project to $S_{y_0}\cap L_{0,1}$ \DM{where} 
$L_{0,1}\subset H_{y_0}$ \DM{is} the line $\Delta_0\cap \Delta_1$. This intersection consist of the point $Y\cap H_{y_0}$ and of at most one other point (otherwise we would have $X$ singular along the plane $\DM{\langle\,}y_0, L_{0,1}\DM{\,\rangle}$).

\begin{prop}
	Let $X$ be a cubic threefold singular along a line of second type. Then a general hyperplane section of $X$ is a cubic surface with one $A_2$ singularity.
\end{prop}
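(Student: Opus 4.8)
The plan is to reduce the statement to a local computation at the unique singular point of a general hyperplane section, using the normal form for $F$ obtained above. Fix a general point $y_0 \in Y$ and a general hyperplane $H$ through it. By the discussion preceding the statement, $S := H \cap X$ is a cubic surface with isolated singularities and $\Sing(S) = Y \cap H = \{y_0\}$, so it is enough to show that $y_0$ is an $A_2$ point of $S$. Choosing coordinates with $y_0 = [0:0:0:0:1]$ we have, as above, $F = X_4\,q_4(X_0,X_1) + c'(X_0,X_1,X_2,X_3)$; since $Y$ is of second type, $Q_{y_0}$ is a genuine pair of hyperplanes, so $q_4$ is a rank-$2$ form in $X_0,X_1$ and, after a linear change in $X_0,X_1$, we may take $q_4 = X_0X_1$.

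Next I would pass to the affine chart $X_4 = 1$, where $X$ has local equation $f = X_0X_1 + c'(X_0,X_1,X_2,X_3)$ at $y_0$, and cut down by $H$. A general hyperplane through $y_0$ has an equation $a_0X_0 + a_1X_1 + a_2X_2 + a_3X_3 = 0$ with $(a_i)$ general; up to relabelling $a_3 \neq 0$, so I eliminate $X_3 = -a_3^{-1}(a_0X_0 + a_1X_1 + a_2X_2)$ and obtain for $S$ the local equation
\[
g(X_0,X_1,X_2) \;=\; X_0X_1 \;+\; c'\!\bigl(X_0,\,X_1,\,X_2,\,-a_3^{-1}(a_0X_0+a_1X_1+a_2X_2)\bigr),
\]
a cubic whose quadratic part is exactly $X_0X_1$. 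Hence the tangent cone of $S$ at $y_0$ is a pair of distinct lines, and by the classification of double points of cubic surfaces (cf. \cite{Bruce_Wall}) $y_0$ is a rational double point of type $A_n$ with $n \geq 2$. To pin down $n = 2$ I would apply the splitting lemma: $g$ is formally equivalent to $X_0X_1 + \phi(X_2)$ with $\mathrm{ord}(\phi) \geq 3$ and $n = \mathrm{ord}(\phi) - 1$, while the coefficient of $X_2^3$ in $\phi$ equals that in $g$, which by homogeneity of $c'$ is $c'(0,0,1,-a_2/a_3)$. Thus $y_0$ is an $A_2$ point exactly when this scalar is nonzero, and for a general $H$ this holds as soon as the binary cubic $c'(0,0,X_2,X_3)$ is not identically zero.

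The one genuine obstacle is therefore checking that $c'(0,0,X_2,X_3) \not\equiv 0$, i.e. that $c' \notin (X_0,X_1)$, i.e. that $X$ does not contain the plane $\{X_0 = X_1 = 0\}$ through $Y$. I would exclude this using Segre's classification: if $X$ contained such a plane, then its intersection with any hyperplane of the pencil through that plane would split off the plane as a component, and one checks that the resulting singular locus is not among those listed in section 3; in other words a normal cubic threefold that is not a cone and is singular along a line of second type contains no plane. With this in hand the proof is complete, the remaining facts — that $S$ is a cubic surface with isolated singularities whose only singular point is $y_0$ — having already been recorded before the statement. The delicate points are the bookkeeping with the normal form and, above all, the exclusion of the contained plane; the splitting-lemma computation itself is routine.
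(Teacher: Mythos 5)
Your local computation is correct and is, in substance, the paper's own argument: the splitting-lemma criterion you derive (the binode is an $A_2$ point if and only if the cubic part of the local equation does not vanish on the kernel of its quadratic part, i.e. $c'(0,0,1,-a_2/a_3)\neq 0$) is exactly the content of the Bruce--Wall Lemma 3 that the paper invokes, and your scalar is, up to a nonzero factor, the value of $F$ at the point $H\cap L_{0,1}=[0:0:a_3:-a_2:0]$, so your nonvanishing condition is literally the paper's genericity assumption $H\cap(S_{y_0}\cap L_{0,1})=\emptyset$. Up to the last step the two proofs therefore coincide, yours being the explicit-coordinates version of the citation.

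The genuine gap is precisely the step you flag and then only sketch: ruling out $c'(0,0,X_2,X_3)\equiv 0$, i.e. ruling out that $X$ contains the plane $\{X_0=X_1=0\}$. Your proposed exclusion (``the resulting singular locus is not among those listed'') does not go through, because containing that plane forces no extra singularities. Consider $F=X_0^2X_3+X_1^2X_4+X_0X_2^2$: its five partial derivatives are linearly independent (so $X$ is not a cone), their common zero locus is exactly the line $Y=\{X_0=X_1=X_2=0\}$ (so $X$ is integral and normal), the quadric tangent cone at $[0:0:0:s:t]\in Y$ is $sX_0^2+tX_1^2$, of rank at most $2$ for all $[s:t]$, so $Y$ is a line of second type --- and yet $F\in(X_0,X_1)$, so $X$ contains the plane $\{X_0=X_1=0\}$ and the relevant binary cubic vanishes identically. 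For this $X$ a direct splitting-lemma computation in the chart $X_3=1$ of a general $H$ (solving $\partial g/\partial X_0=\partial g/\partial X_1=0$ gives $X_1=0$, $X_0=-X_2^2/2$, hence $\phi(X_2)=-X_2^4/4$) shows that the general hyperplane section has a single $A_3$, not $A_2$, singularity. So the missing step cannot be supplied as stated: one must either add the hypothesis that $X$ does not contain the plane spanned by $Y$ and the common vertex of the pencil of tangent cones, or weaken the conclusion to ``an $A_n$ singularity with $n\geq 2$''. Be aware that the paper's proof conceals the same difficulty: it asserts that $H\cap(L_{0,1}\cap S_{y_0})=\emptyset$ can be arranged by generality, which is impossible exactly when $L_{0,1}\subset S_{y_0}$, i.e. in the situation above, and the earlier parenthetical claim that this would make $X$ singular along the plane $\langle y_0,L_{0,1}\rangle$ is contradicted by the same example.
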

\begin{proof}
	Consider a hyperplane $H$ such that $\DM{\Sing}(X\cap H)$ consists of $Y\cap H=y_0$ and that moreover satisfies the condition $\rk (q_4\restriction_H)=2$ ($H$ is a general hyperplane through $y_0$). Under generality assumptions, we might also suppose that $H\cap (L_{0,1}\cap S_{y_0})=\emptyset$. The cubic surface $S=X\cap H$ presents just one singular point at $y_0$, a binode. Following \cite{Bruce_Wall} Lemma 3, in order to determine the nature of the singularity, we have to look at the intersection $H\cap (S_{y_0}\cap L_{0,1})$. Since for our choice of $H$ this intersection is empty, we can conclude that the point $y_0$ is an $A_2$ singularity.
\end{proof}

\subsection{Cubic threefolds singular along a conic}

Let be $X$ be a cubic threefold containing a double conic $Y$ supported on $\overline{Y}=\nu_2(\D{P}^1)$ where $\nu_2$ is defined as:

\begin{align*}
\nu_1: \D{P}^1 \longrightarrow& \D{P}^4\\
[t_0:t_1]\mapsto& [t_0^2:t_0t_1:t_1^2:0:0].
\end{align*}
Choose \DM{coordinates} on $\D{P}^4$ in such a way that $\overline{Y}:=\{X_3=0,\  X_4=0,\ q(X_0,X_1,X_2)=0\}$, \DM{where} $q(X_0,X_1,X_2)$ \DM{is} the polynomial $X_1^2-X_0X_2$. We know that if $Y\subset \Sing(X)$, we can then write $F$ as $F=\sum_{i=0}^4 X_i Q_i$ with $Q_i \in (X_3,\ X_4,\ q(X_0,X_1,X_2))_2$.
Hence 
\begin{equation}\label{pol-conic}
F=\sum_{i,j\in \{3,4\}} l_{ij}(X_0,\dots X_4)X_iX_j + l_{012}(X_0,\dots X_4)q(X_0,X_1,X_2)
\end{equation}
where the $l_{ij}$s and $l_{012}$ are linear forms.
From \ref{pol-conic} we get:
$$F=c(X_3,X_4)+ \sum_{i=0}^2 X_i q_i(X_3,X_4) + l(X_3,X_4) q(X_0,X_1,X_2)+ a(X_0,X_1,X_2)$$
where $\deg(l(X_3,X_4))=1, \ \deg q_i(X_3,X_4)=\deg(q(X_0,X_1,X_2))=2, \ i=0,\dots,2$ and $\deg(c(X_3,X_4))=\deg(a(X_0,X_1,X_2))=3.$
Call $\Delta\simeq \D{P}^2$ the plane defined by the equations $\{X_3=0, X_4=0\}$. As $Y\subset \Delta$, the plane cubic $X\cap \Delta:=\{a(X_0,X_1,X_2)=0\}$
must be singular along $Y$ hence we can conclude that $a(X_0,X_1,X_2)$ must be equal to zero.
Finally we obtain that $F$ can be reduced to the following form:
\begin{equation}\label{conic}
F= c(X_3,X_4)+ \sum_{i=0}^2 X_i q_i(X_3,X_4) + l(X_3,X_4) q(X_0,X_1,X_2)
\end{equation} 
This time the quadric tangent cones to $X$ at points on $Y$ describe a conic $\mathcal{Q}_{Y}\subset |\mathcal{O}_{\D{P}^4}(2)|$. The quadric tangent cone at the point $[t_0:t_1]\in \overline{Y}$ is the element $q_{[t_0:t_1]} \in \mathcal{Q}_{Y}$:

$$q_{[t_0:t_1]}= (t_0^2 q_0(X_3,X_4)+t_0t_1 q_1(X_3,X_4)+ t_1^2q_2(X_3,X_4)) - ({t_0}^2X_2-2 t_0t_1 X_1+{t_1}^2X_0) l(X_3,X_4)$$

\

\textbf{$Y$ is a conic of first type}

We suppose now that the generic element of $\mathcal{Q}_{Y}$ is a quadric of rank 3 (namely $Y$ is a double conic of \textit{first type}).
If this is so, the conic $Y$ presents two binodes $x_1, \ x_2$. These are the two points whose coordinates $[t_0:t_1]$ are such that the quadric of equation $ t_0^2 q_0(X_3,X_4)+t_0t_1 q_1(X_3,X_4)+ t_1^2q_2(X_3,X_4)$ %questo lo so spiegare bene. Perchè ho conica in P^2 ed una retta.
contains the hyperplane $\{l(X_3,X_4)=0\}.$

\begin{prop}
	Let $X$ be a cubic threefold singular along a conic of first type. Then a general hyperplane section of $X$ is a cubic surface with 2 $A_1$ singularities.
\end{prop}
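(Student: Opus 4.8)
The plan is to show that, for a general hyperplane $H\subset\D{P}^4$, the two singular points of $S:=X\cap H$ are exactly the two points in which $H$ meets the double conic $Y$, and that at each of them $S$ has an ordinary node. The first part is a general‑position statement; the second is a rank computation on the quadric tangent cones of $X$ along $Y$, read off from the normal form (\ref{conic}).

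\emph{Locating $\Sing(S)$.} By Segre's classification, $\Sing(X)=Y\cup Z$ with $Y$ the double conic (possibly degenerating into two incident lines) and $Z$ a finite, possibly empty, scheme; moreover $Y$ carries exactly two special points $x_1,x_2$ (the ``binodes'', where the quadric tangent cone degenerates further, i.e.\ where the quadric $t_0^2q_0+t_0t_1q_1+t_1^2q_2$ contains $\{l=0\}$). For $H$ outside the dual variety $X^*$ one has $\Sing(S)=\Sing(X)\cap H$, as recalled above; and for general $H$ the intersection $H\cap Z$ is empty, $H$ meets the conic $Y$ transversally in two distinct points $y_1,y_2$, and $\{y_1,y_2\}\cap\{x_1,x_2\}=\emptyset$ (also avoiding the node of $\overline Y$ in the degenerate case) — all open conditions on $H\in(\D{P}^4)^*$. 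Hence $\Sing(S)=\{y_1,y_2\}$ and $S$ is a cubic surface with exactly two double points.

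\emph{Identifying the node.} Fix $i\in\{1,2\}$ and let $p=[t_0^2:t_0t_1:t_1^2:0:0]$ be $y_i$. In the expression for the quadric tangent cone $q_{[t_0:t_1]}=Q-L\,l$ displayed above, one has $Q=t_0^2q_0+t_0t_1q_1+t_1^2q_2\in S^2\langle X_3,X_4\rangle$, $l=l(X_3,X_4)$ independent of $t$, and $L=t_1^2X_0-2t_0t_1X_1+t_0^2X_2$, which is the equation, inside the plane $\Delta=\{X_3=X_4=0\}$, of the tangent line to $\overline Y$ at $p$. Choosing $L$ as one of the coordinates on $\langle X_0,X_1,X_2\rangle$, the quadratic form $q_{[t_0:t_1]}$ involves only $L,X_3,X_4$, and its $3\times 3$ symmetric matrix has vanishing $(L,L)$‑entry and determinant a nonzero multiple of the value of $Q$ at the zero of $l$; this value is nonzero precisely because $l\nmid Q$, i.e.\ because $p\neq x_1,x_2$. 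Thus $Q_{y_i}\subset\D{P}^4$ has rank $3$ and its vertex is the line $\ell:=\{L=X_3=X_4=0\}\subset\Delta$, which passes through $y_i$ and is the tangent line to $\overline Y$ at $y_i$. Now $H$ meets $Y$ transversally at $y_i$, so $H\cap\Delta$ is a secant of the conic $\overline Y$ and therefore differs from the tangent line $\ell$; hence $H$ does not contain the vertex line of $Q_{y_i}$. Slicing the rank‑$3$ quadric cone $Q_{y_i}$ by a hyperplane missing its vertex line yields a quadric of rank $3$ in the three local coordinates of $H$ at $y_i$, so the tangent cone of $S$ at $y_i$ is a smooth conic and $y_i$ is an ordinary node, i.e.\ an $A_1$ singularity of $S$. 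This shows that a general hyperplane section $S$ is a cubic surface with exactly two $A_1$ points.

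The only step carrying real content is the identification, via (\ref{conic}), of the vertex line of $Q_{y_i}$ with the tangent line to $\overline Y$, together with the observation that a general $H$ — being transversal to $Y$ — cannot contain it; everything else is a routine Bertini‑type argument. Alternatively one may bypass the last sentences by invoking Bruce–Wall \cite{Bruce_Wall}: a point of a cubic surface with rank‑$3$ quadratic tangent cone is automatically an $A_1$ point, so only the rank‑$3$ statement for general $H$ is needed, and the degenerate‑conic subcase is entirely analogous.
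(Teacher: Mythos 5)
Your proof is correct and follows essentially the same route as the paper: both identify the quadric tangent cone at a non-binode point of $Y$ as a rank-$3$ quadric singular along the tangent line $\D{T}_{y}\overline{Y}$, and then choose $H$ meeting $Y$ in two points distinct from $x_1,x_2$ with $\D{T}_{y_i}\overline{Y}\not\subset H$, so that each $y_i$ becomes a conic node of $S$. The only difference is that you supply the explicit determinant computation (rank $3$ iff $l\nmid Q$, i.e.\ off the binodes) and the observation that transversality of $H\cap Y$ already forces $H$ to miss the tangent lines, both of which the paper merely asserts as open conditions.
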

\begin{proof}
	
	For any point $y\in Y$ $y\ne x_i, i=1,2$, the quadric tangent cone at $y$ is a quadric singular along the line $\D{T}_y\overline{Y}$. 
	Consider now a hyperplane $H\subset \D{P}^4$ such that \mbox{$\DM{\Sing}(X\cap H)=Y\cap H$} consists of two points $y_1, \ y_2$ different from $x_1$ and $x_2$.
	Choosing this hyperplane in such a way that $\D{T}_{y_i}\overline{Y}\not\subset H$, for $i=1, \ 2$; we get that the hyperplane section $S:=X\cap H$ is a cubic surface presenting just two conic nodes $y_1, \ y_2$.
	$S$ is thus a cubic surface with $2 A_1$ singularities.
	
\end{proof}

\begin{rmk}
	The conic $Y$ might also be supported on $\overline{Y}=L_0\cup L_1$, the union of two lines meeting at a point $x_0$. We still can choose coordinates on $\D{P}^4$ in a way that $F$ has the form \ref{conic}, but this time the degree two form $q$ can be written as $q(X_0,X_1)=X_0X_1$.  
	We assume that the line $L_i$ has equations $\{X_3=X_4=X_i=0\}$, for $i=0,1$. 
	The quadric tangent cones at points in $Y$ define now two \DM{pencil}s of quadrics $\mathcal{Q}_{L_i}\subset |\mathcal{O}_{\D{P}^4}(2)|, \ i=1,2$, meeting at a point. Choose an arbitrary point $y$ on one of these lines, say $L_0$, \mbox{$y=[0:t_0:t_1:0:0]$.} 
	The quadric tangent cone at this point is:
	$$q_{L_0,[t_0:t_1]}= (t_0 q_0(X_3,X_4)+t_1 q_2 (X_3,X_4))+ l(X_3,X_4)(t_0X_1).$$
	Hence a general point on each line is a conic node whether the point $x_0$ is either a binode or a unode.
	For a general hyperplane $H$, $X\cap H$ is again a cubic surface having two $A_1$ singularities in $H\cap L_i, \ i=0,1$.
\end{rmk}

\paragraph{$Y$ is a conic of second type}

\

Suppose now that $Y$ is a conic of second type, namely a general point of $Y$ is a binode. 
The condition that $Y$ is a double conic of second type is equivalent to having $l(X_3,X_4)|q_i(X_3,X_4)$ for $i=0,1,2$ in the equation \ref{conic}. Up to a change of coordinates the polynomial $F$ can then be written as:
$$F=c(X_3,X_4) + X_3(\sum_{i=0}^2 \alpha_i (X_4X_i)) + X_3 q(X_0,X_1,X_2)$$
where $c(X_3,X_4)$ is a form of degree 3 such that $X_3 \nmid c(X_3,X_4)$. This time the conic $\mathcal{Q}_{Y}\subset |\mathcal{O}_{\D{P}^4}(2)|$ parametrizing quadric tangent cones to $X$ at points $[t_0:t_1]\in \overline{Y}$ has the form:
$$ q_{[t_0:t_1]}= X_3 [ t_0^2(\alpha_0 X_4-X_2)+ t_0t_1(\alpha_1 X_4-2X_1) + t_1^2(\alpha_2 X_4-X_0)]. $$
Each of these quadric cones decomposes in the union of two spaces. One of them is a hyperplane $T$ fixed ( it is the hyperplane of equation $X_3=0$), the other varies along a family of hyperplanes $\mathcal{H}_{Y,[t_0:t_1]}, \ [t_0:t_1]\in \D{P}^1$ over $Y$. %$\mathcal{H}_{Y,[t_0:t_1]}\subset |\mathcal{O}_{\D{P}^4}(1)|$. 
We denote by $h_{[t_0:t_1]}$ the linear form defining $\mathcal{H}_{Y,[t_0:t_1]}$.

\begin{prop}
	Let $X$ be a cubic threefold singular along a conic of second type. Then a generic hyperplane section of $X$ is a cubic surface with 2 $A_2$ singularities.
\end{prop}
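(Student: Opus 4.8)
The plan is to reproduce, with $A_2$'s in place of $A_1$'s, the argument used for a conic of first type, invoking Bruce and Wall's analysis of binodes of cubic surfaces exactly as in the line‑of‑second‑type case. Fix $H$ corresponding to a general point of ${\D{P}^4}^*\setminus (X^*)$; by what was recalled in Section 3, $S:=X\cap H$ is then a cubic surface with $\Sing(S)=\Sing(X)\cap H$, and since by Segre's classification $\Sing(X)=Y\cup Z$ with $Z$ a finite scheme, for general $H$ one has $\Sing(S)=Y\cap H$. Assuming $\overline{Y}$ irreducible (the degenerate case $\overline{Y}=L_0\cup L_1$ is entirely analogous), $Y\cap H$ consists of two points $y_1,y_2$, both general on $\overline{Y}$. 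Each $y_i$ is a binode of $X$: by $(\ref{conic})$ the quadric tangent cone at a point $[t_0:t_1]\in\overline{Y}$ is $q_{[t_0:t_1]}=X_3\,h_{[t_0:t_1]}$, the union of the fixed hyperplane $T=\{X_3=0\}$ and the hyperplane $\mathcal{H}_{Y,[t_0:t_1]}=\{h_{[t_0:t_1]}=0\}$, and since $h_{[t_0:t_1]}$ does not involve $X_3$ and restricts to a nonzero linear form on $\langle\overline{Y}\rangle=\{X_3=X_4=0\}$, these two hyperplanes are always distinct, so the cone has rank exactly $2$. Cutting with $H$, the quadric tangent cone of $S$ at $y_i$ is $(T\cup\mathcal{H}_{Y,y_i})\cap H$, a union of two distinct planes of $H\simeq\D{P}^3$ meeting along the line $\ell_i:=\Pi_{y_i}\cap H$, where $\Pi_{y_i}:=T\cap\mathcal{H}_{Y,y_i}\simeq\D{P}^2$ is the singular plane of the ambient cone; hence $y_i$ is a binode of $S$ too.

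By \cite{Bruce_Wall}, Lemma 3 (applied exactly as in the proof for a line of second type), the binode $y_i$ is an $A_2$ point of $S$ precisely when the singular line $\ell_i$ of its tangent cone is not contained in $S$; if instead $\ell_i\subset S$ the singularity is $A_3$ or worse. So it suffices to check that $\ell_i\not\subset X$ for general $H$. This is where the normal form $(\ref{conic})$ enters: restricting $F$ to $T=\{X_3=0\}$ leaves only $c(0,X_4)=c_3X_4^3$ with $c_3\ne0$, because $X_3\nmid c(X_3,X_4)$; thus, set‑theoretically, $X\cap T=\{X_3=X_4=0\}=\langle\overline{Y}\rangle$, and a fortiori $X\cap\Pi_{y_i}$ is contained in the line $\langle\overline{Y}\rangle\cap\Pi_{y_i}$. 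On the other hand, as $X$ is singular along $\overline{Y}$ its quadric tangent cone at $y_i$ is singular along $\D{T}_{y_i}\overline{Y}$, and $\D{T}_{y_i}\overline{Y}$ lies in the plane $\langle\overline{Y}\rangle$; being a line inside the line $\langle\overline{Y}\rangle\cap\Pi_{y_i}$, it equals it. Hence $X\cap\Pi_{y_i}=\D{T}_{y_i}\overline{Y}$ as sets, so $\ell_i\subset X$ would force $\ell_i=\D{T}_{y_i}\overline{Y}$, i.e. $\D{T}_{y_i}\overline{Y}\subset H$. But the hyperplanes containing the tangent line to $\overline{Y}$ at one of their two points of intersection with $\overline{Y}$ sweep out a subvariety of ${\D{P}^4}^*$ of dimension at most $1+2<4$, so a general $H$ avoids it; therefore $\ell_1,\ell_2\not\subset S$ and both $y_1,y_2$ are $A_2$ singularities of $S$. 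As $\Sing(S)=\{y_1,y_2\}$, this proves the proposition.

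I expect the genuinely delicate point to be the local one: extracting from \cite{Bruce_Wall} the precise criterion ``a binode of a cubic surface is $A_2$ if and only if the singular line of its tangent cone is not contained in the surface'' and matching it to $\ell_i$ — on a cubic surface the worse binodes $A_{\ge 3}$ do contain that line (unlike the abstract normal form $uv+w^{n+1}$), so the criterion is clean but worth stating carefully, and one must keep track of the scheme structure of $X\cap\Pi_{y_i}$, which is really a triple line. The remaining ingredients — the splitting $q_{[t_0:t_1]}=X_3h_{[t_0:t_1]}$, the computation $F|_{\{X_3=0\}}=c_3X_4^3$, and the dimension count for ``general $H$'' — are routine and parallel the preceding propositions.
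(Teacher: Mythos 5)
Your proof is correct and follows essentially the same route as the paper: both reduce to the Bruce--Wall Lemma 3 criterion for a binode exactly as in the line-of-second-type case, use the normal form to see that $F$ restricts to $c_3X_4^3$ on $T=\{X_3=0\}$ so that the only line of $X$ in the singular plane $\Pi_{y_i}$ of the tangent cone is $\langle\overline{Y}\rangle\cap\Pi_{y_i}=\D{T}_{y_i}\overline{Y}$, and conclude by a genericity condition on $H$. The only (cosmetic) difference is that you package the genericity as an incidence-variety dimension count on ${\D{P}^4}^*$, while the paper asks $H$ to avoid a finite set of points on an auxiliary hyperplane $H_y$; these are the same condition.
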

For a general hyperplane $H \subset \D{P}^4$, the cubic surface $S=X\cap H$ is singular along 2 binodes $\{y_1, \ y_2\}= Y\cap H$.
Arguing exactly as for the case of cubic threefolds containing a double line of second type, we can choose $H$ in such a way that  at each point $y_i$, the surface $S$ has an $A_2$ singularity. 
Indeed taking any point $y$ having coordinates $[t_0:t_1]$ on $Y$ and a plane $H_y$ orthogonal to it defined by a linear equation $h(X_0, \dots,X_4)=0$, we compute that the intersection $H_y\cap X\cap \mathcal{H}_{Y,[t_0:t_1]}\cap T$ is defined by the equations
$$h(X_0,\dots,X_4)=h_{[t_0:t_1]}(X_0,\dots X_4)=X_3=c(X_3,X_4)=0$$
and consists then in at most 3 points. Imposing the additional (open) condition that $H$ does not pass through any of \DM{these points}, we finally have that $S=H\cap X$ presents two $A_2$ singularities at $y_1, \ y_2$.
\begin{rmk} Again, the conic $Y$ might also degenerate to the union of two double lines of second type $Y_1, \ Y_2$ meeting at a point $x_0$.
	If this is the case, every point $y \in Y$ is a binode except from the point $x_0$ that is a unode. Anyway arguing exactly as in the smooth case we can conclude that a general hyperplane section of $X$ is still a cubic surface with 2$A_2$ singularities. 
\end{rmk}

\subsection{Cubic threefolds singular along three non coplanar lines meeting at a point}
Suppose that $X$ contains a double curve $Y\subset \DM{\Sing}(X)$ supported on the union of three lines $L_0, L_1, L_2$  meeting at a point $x_0$ and not lying in the same plane. Choose linear coordinates in such a way that the lines $L_i$s are given by:
$$L_0=\{X_3= X_1=X_2=0\} \hspace{3mm} L_1=\{X_3=X_0=X_2=0\} \hspace{3mm} L_2=\{X_3=X_0=X_1=0\}.$$
For this choice of coordinates on $\D{P}^4$ the homogeneous ideal defining the curve $Y$ is $(X_3,\ X_0X_1,\ X_0X_2, \ X_1X_2)$ and $x_0$ is the point $[0:0:0:0:1]$. 
$F$ belongs to the ideal $(X_3,\ X_0X_1,\ X_0X_2, \ X_1X_2)$ and since $F$ is singular along each line $L_i$ we have $$F=\sum\limits_{i,j \in\{0,1,2 \}}l_{ij}(X_0,\dots X_4) X_iX_j + h(X_0,\dots X_4)X_3^2,$$
where $\deg(l_{ij}(X_0,\dots X_4))=\deg(h(X_0,\dots X_4))=1$ leading to: 
$$F=\alpha X_3^3+ X_3^2 l(X_0,X_1,X_2,X_4) + X_3 q(X_0,X_1,X_2) + X_4(\sum_{i=0}^2 \beta_i \frac{ X_0X_1X_2}{X_i})$$
\DM{where $l$ is} a linear form $l=\sum\limits _{i\ne 3} a_i X_i$ and $q$ \DM{is} a degree 2 polynomial $q=\sum\limits_{\substack{i,j=0 \\ i\ne j}}^{2}b_{ij}X_iX_j.$

The quadric tangent cones at points in $Y$ \DM{determine} three \DM{pencil}s of \sloppy \DM{quadrics \mbox{$\mathcal{Q}_{L_i}\subset |\mathcal{O}_{\D{P}^4}(2)|$}:}
$$\mathcal{Q}_{L_i}=t_0( a_i X_3^2+ X_3( \sum\limits_{\substack{j =0, \\ i\ne j}}^2 b_{ij}X_j) +  X_4( \sum\limits_{\substack{j =0, \\ i\ne j}}^2 \beta_{j}X_j)) + t_1( a_4 X_3^2 + \sum_{j=0}^2 \beta_j \frac{X_0X_1X_2}{X_j}),$$ 
with $[t_0:t_1] \in \D{P}^1, \ i,j,k \in \{0,1,2\}. $

We see thus that a general element of each line is a conic node and that the point $x_0$ is a unode. 
\begin{prop}
	Let $X$ be a cubic threefold singular along three non coplanar lines meeting at a point. Then a general hyperplane section of $X$ is a cubic surface with 3 $A_1$ singularities.
\end{prop}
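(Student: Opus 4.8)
**Proposal for proving the final statement (a general hyperplane section of a cubic threefold singular along three non-coplanar lines meeting at a point has $3A_1$ singularities).**

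The plan is to argue exactly as in the previous two subsections: identify the singular locus of a general hyperplane section, and then read off the singularity type of each point from its quadric tangent cone. First I would recall that for $X$ singular exactly along the curve $Y = L_0\cup L_1 \cup L_2$ (plus possibly finitely many isolated points, none of which need concern us after imposing an open condition on $H$), a general hyperplane $H$ corresponds to a point of ${\D{P}^4}^*\setminus X^*$, so $S := X\cap H$ is singular precisely along $Y\cap H$. A general $H$ meets each line $L_i$ in a single point $y_i$, and avoids the triple point $x_0$ (an open condition, since $x_0$ is a single point); so $\Sing(S) = \{y_1, y_2, y_3\}$ consists of three points, one on each line, each of them a general point of the corresponding line.

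Next I would show each $y_i$ is an $A_1$ singularity of $S$. From the computation of the pencils $\mathcal Q_{L_i}$ already carried out in the excerpt, a general point of $L_i$ has quadric tangent cone (in $X$) of rank $3$ — i.e. it is a conic node of $X$. The quadric tangent cone to $S$ at $y_i$ is the restriction of the quadric tangent cone of $X$ to $H$. Since the rank-$3$ quadric cone $Q_{y_i}\subset \D{P}^4$ is singular exactly along the line $\ell_i := \D{T}_{y_i}L_i$ (the unique line in its singular locus), choosing $H$ so that $\ell_i\not\subset H$ for $i=1,2,3$ — again an open condition on $H$ — forces $Q_{y_i}\cap H$ to be a quadric of rank $3$ in $H\simeq\D{P}^3$, i.e. a quadric cone singular only at $y_i$. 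Hence $y_i$ is a conic node of $S$, an $A_1$ singularity.

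Finally I would assemble: under the finitely many open conditions imposed above (a general point of ${\D{P}^4}^*$ avoids $X^*$; $H$ misses $x_0$ and misses the finitely many isolated singular points of $X$ and the finitely many points of $L_{0,1}$-type pairwise intersections; $H$ contains none of the three tangent lines $\ell_i$), the surface $S = X\cap H$ is a cubic surface whose singular locus consists of exactly three conic nodes, hence $S$ has $3A_1$ singularities, in the sense of Bruce–Wall's classification \cite{Bruce_Wall}. I expect the only mildly delicate point to be the bookkeeping that all these requirements are simultaneously open and nonempty — this is routine since each is the complement of a proper closed subset of the irreducible ${\D{P}^4}^*$ — together with the verification, already essentially contained in the displayed formula for $\mathcal Q_{L_i}$, that the generic quadric in each pencil indeed has rank exactly $3$ and singular locus exactly the tangent line to $L_i$; everything else parallels the $A_1$ arguments in the line-of-first-type and conic-of-first-type cases verbatim.
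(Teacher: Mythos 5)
Your proposal is correct and takes essentially the same approach as the paper, whose proof simply says to ``argue exactly as in the previous cases of curves of first type'': a general $H$ meets each $L_i$ in a single general point $y_i$, avoids the common point $x_0$, and since the rank-$3$ quadric tangent cone at $y_i$ is singular only along $L_i\not\subset H$, its restriction to $H$ remains of rank $3$, so each $y_i$ is a conic node and $S$ has $3A_1$ singularities. (One minor terminological slip: $x_0$ is a unode of $X$, not a triple point --- the paper assumes $X$ has no triple points --- though excluding it from $H$ is of course still the right open condition.)
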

\begin{proof}
	Arguing exactly as in the previous cases of curves of first type, we see that for a general hyperplane $H$, $S=H\cap X$ is a cubic surface with three conic nodes $y_0,\ y_1, y_2,$ where $y_i:=H\cap L_i$ ; hence a cubic surface with 3 $A_1$ singularities. 
\end{proof}

\subsection{Cubic threefolds singular along a rational normal quartic}
We consider $X$, the secant variety of a rational quartic curve $Y$. The singular locus of $X$ is the entire curve $Y$. 
We express $Y$ as the image of the embedding:
\begin{align*}
\nu_4:\D{P}^1 \longrightarrow & \D{P}^4\\
[t_0:t_1]\mapsto & [t_0^4:t_0^3t_1:t_0^2t_1^2:t_0t_1^3:t_1^4].
\end{align*}
$Y$ is the intersections of the quadrics:
$$ q_0= X_2X_4 -X_3^2, \ q_1=X_2X_3-X_1X_4, \ q_2=2X_1X_3-3X_2^2,$$  $$q_3=X_1X_2-X_0X_3, \ q_4=X_0X_2-X_1^2.$$

defined by the minors of the matrix:

\begin{equation*}
\begin{pmatrix}
X_0 & X_1& X_2 &X_3 \\
X_1 & X_2 & X_3 & X_4
\end{pmatrix}
\end{equation*}
The polynomial $F$ defining $X$, secant variety of $Y$, is the determinant of the matrix:

\begin{equation*}\label{det-ssecant}
N=\begin{pmatrix}
X_0 & X_1 & X_2 \\
X_1 & X_2 & X_3 \\
X_2 & X_3 & X_4 

\end{pmatrix}
\end{equation*}
Therefore we get:
$$F=X_0(X_2X_4-X_3^2) +X_2(X_1X_3 -X_2^2)+ X_1( X_3X_2-X_1X_4).$$

The quadric tangent cones to $X$ at points on $Y$ \DM{belong} to a rational quartic \textit{$\mathcal{Q}_{Y}$} contained in $|\mathcal{O}_{\D{P}^4}(2)|$. The quadric tangent cone at the point $[t_0:t_1]\in Y$ is defined by:
\begin{equation}\label{tc_quartic}
q_{[t_0:t_1]}=t_0^4 q_0+ 2 t_0^3t_1 q_1+ t_0^2t_1^2 q_1+2t_0t_1^3q_3+t_1^4q_4.
\end{equation}
\begin{prop}
	Let $X$ be the secant cubic threefold. Then a general hyperplane section of $X$ is a cubic surface with 4 $A_1$ singularities.
\end{prop}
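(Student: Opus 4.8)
### Proof plan for the secant cubic threefold statement

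The plan is to exploit the explicit coordinates already set up. We have $Y = \nu_4(\D{P}^1)$, the threefold $X$ is its secant variety with $F = \det N$ where $N$ is the Hankel matrix in $X_0,\dots,X_4$, and $\Sing(X) = Y$ consists entirely of points that are conic nodes, as follows from inspecting the pencil of quadric tangent cones $q_{[t_0:t_1]}$ in \eqref{tc_quartic}. First I would take a hyperplane $H \subset \D{P}^4$ corresponding to a general point of ${\D{P}^4}^*$, so that $h \notin X^*$ and hence $S := H \cap X$ is singular precisely along $H \cap Y$. Since $Y$ is a rational normal quartic in $\D{P}^4$, a general hyperplane meets it transversally in exactly $\deg Y = 4$ points $y_1,\dots,y_4$, and these are four distinct points in general position (no three collinear, since $Y$ is a rational normal curve). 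So $S$ is a cubic surface with exactly four singular points.

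Next I would identify the type of each singularity $y_i$. Each $y_i$ lies on $Y$ and is therefore a conic node of $X$: the quadric tangent cone $Q_{y_i}$ has rank $3$ (from \eqref{tc_quartic}, $q_{[t_0:t_1]}$ is a rank-$3$ quadric for general $[t_0:t_1]$, its singular locus being the tangent line $\D{T}_{[t_0:t_1]}Y$). For $y_i$ to remain an $A_1$ (conic node) singularity of the surface $S = H \cap X$ rather than worsen, I need $H$ not to contain the singular line $\D{T}_{y_i} Y$ of $Q_{y_i}$; this is an open condition on $H$, satisfied for general $H$ since there are only finitely many $y_i$ and each excludes a hyperplane in ${\D{P}^4}^*$. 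Under this condition the quadric tangent cone to $S$ at $y_i$, namely $Q_{y_i} \cap H$, is a rank-$3$ quadric in $\D{P}^3$, i.e. a cone over a conic, so $y_i$ is an ordinary double point $A_1$ of $S$. Finally, since a general hyperplane section of an integral threefold that is not a cone is itself irreducible and not a cone, and a cubic surface cannot have more than four $A_1$ singularities (four being the maximum, attained by the Cayley cubic), this forces $S$ to have exactly four $A_1$ points and nothing worse — but actually the argument above already shows directly that all four are $A_1$, so I only need irreducibility of $S$, which follows from Bertini applied to the integral threefold $X$ together with the fact that $X$ is not contained in any hyperplane.

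The main obstacle I anticipate is making rigorous the transversality claim — that a general hyperplane meets $Y$ in four reduced points that are distinct and away from any point where the tangent-cone behaviour could degenerate — and confirming from \eqref{tc_quartic} that the generic member of the quartic family $\mathcal{Q}_Y$ is genuinely of rank $3$ (so that the generic point of $Y$ is a conic node, not a binode). This is a direct rank computation on the symmetric matrix associated to $q_{[t_0:t_1]}$: one writes down the $5\times 5$ symmetric matrix, which depends on $[t_0:t_1]$, and checks that its rank is $3$ for $[t_0:t_1]$ outside a proper closed subset (in fact it should be $3$ identically on $\D{P}^1$, since every $Q_y$ is singular along a line but $X$ has only isolated singular directions transverse to $Y$). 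Granting that, the identification of $S$ as a cubic surface with four $A_1$ singularities is immediate from Bruce--Wall's classification \cite{Bruce_Wall}, as in the previous propositions of this section.
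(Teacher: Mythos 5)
Your proposal is correct and follows essentially the same route as the paper: a general hyperplane meets $Y$ transversally in four points, each a conic node of $X$ by the explicit form of $q_{[t_0:t_1]}$, and genericity (your condition $\D{T}_{y_i}Y\not\subset H$ making the precise sense of the paper's ``generality assumptions'') ensures each cuts out an $A_1$ point of $S$. The extra details you supply (transversality, rank check, irreducibility via Bertini) are elaborations of the same argument rather than a different method.
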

\begin{proof}
	
	For a general hyperplane $H\subset \D{P}^4$, the cubic surface $S:=H\cap X$ is singular along $Y\cap H$ hence it presents 4 singular points $y_1, \dots y_4$.
	From (\ref{tc_quartic}) we see that a general point of $Y$ is a conic node, therefore under generality assumptions we can suppose that $S$ presents 4 $A_1$ singularities at the points $y_i$.
\end{proof}

\begin{rmk}
	A possible degeneration of the situation we have just studied occurs when the quartic consists of the union of two conics $Y_1, \ Y_2$ meeting in a point $x_0$ but not lying in a same hyperplane. The general point of each conic is a conic node (more precisely every point in $Y$ is a conic node apart from the point $x_0$ that is a binode). Again, a general hyperplane section is a cubic surface presenting 4 $A_1$ singularities.

\end{rmk}

Summing up what we have proved so far is the following:
\begin{prop}\label{sing-hs} Let $X$ be a cubic threefold singular along a curve $Y$ of degree $d$ and of $r$-th type, $r=1,2$. Then for a general hyperplane $H\subset \D{P}^4$, $S:=H\cap X$ 
	is a cubic surface with $d$ $A_r$ singularities.
\end{prop}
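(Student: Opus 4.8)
The plan is to go case by case through the classification established in the preceding subsections, which already pins down, for each double curve $Y\subset\Sing(X)$ of degree $d$ and $r$-th type, the singularity type of a general hyperplane section $S=H\cap X$. Indeed, in the subsections on lines, conics, triples of concurrent lines, and the rational normal quartic, we have shown respectively that $S$ has one $A_1$ (line, first type), one $A_2$ (line, second type), two $A_1$ (conic, first type, including the degenerate two-line case), two $A_2$ (conic, second type, including the degenerate case), three $A_1$ (three concurrent non-coplanar lines), and four $A_1$ (secant variety of a rational normal quartic, including the two-conic degeneration). So the statement is essentially a bookkeeping consolidation: for a double curve of degree $d$ and type $r$, the general hyperplane section acquires exactly one $A_r$ singularity per point of $Y\cap H$, and since $\deg Y=d$ a general hyperplane meets $\overline Y$ in $d$ points.

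First I would record that by the discussion at the end of Section~3, for $H$ corresponding to a general point of ${\D{P}^4}^*\setminus X^*$, one has $\Sing(S)=\Sing(X)\cap H=Y\cap H$ (the finitely many embedded/isolated points of $\Sing(X)$ not on $Y$ are avoided by a general $H$, and a general $H$ is transverse to $\overline Y$, so $Y\cap H$ consists of $\deg Y=d$ reduced points, each a \emph{general} point of a component of $Y$). Then I would invoke, for each $y_i\in Y\cap H$, the local analysis already carried out: the quadric tangent cone $Q_{y_i}$ of $X$ at a general point of $Y$ has rank $5-r$ by definition of $r$-th type, hence the quadric tangent cone of $S$ at $y_i$, which is the restriction $Q_{y_i}\cap H$, has rank $3-(r-1)=4-r$ for general $H$ (transversality ensures the rank drops by exactly one upon intersecting with a general hyperplane through $y_i$), i.e.\ $y_i$ is a double point of the cubic \emph{surface} $S$ whose tangent cone has rank $4-r$. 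For $r=1$ this is a node $A_1$; for $r=2$ the tangent cone has rank $2$ and one must check, via Bruce--Wall's classification \cite{Bruce_Wall} (Lemma 3), that the extra genericity conditions on $H$ already imposed in the $r=2$ propositions rule out worse singularities, leaving exactly $A_2$. This is exactly what Propositions in subsections 3.1--3.4 establish point by point; the present proposition just asserts the common pattern.

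The key steps, in order: (i) state that a general $H$ satisfies $\Sing(S)=Y\cap H$ and that this is $d$ reduced points lying at general points of the components of $Y$; (ii) for each such point, cite the type-$r$ hypothesis to get the rank of the quadric tangent cone of $X$, and deduce the rank of the tangent cone of $S$; (iii) translate ``cubic surface double point with tangent cone of rank $4-r$, plus the genericity conditions of the case-by-case propositions'' into ``$A_r$ singularity'' using Bruce--Wall; (iv) conclude that $S$ has precisely $d$ singularities, all of type $A_r$, and no others. I would phrase the proof simply as: ``This follows by combining Propositions [line first type], [line second type], [conic first type], [conic second type], [three lines], [quartic] above, according to the classification of double curves recalled in this section; each point of $Y\cap H$ contributes exactly one $A_r$ singularity to $S$, and $\#(Y\cap H)=\deg Y=d$.''

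The main obstacle, such as it is, is not really in the proof of this proposition — it is the cumulative work of the four preceding subsections, in particular the $r=2$ cases where one must verify that the general hyperplane through a binode can be chosen to avoid the auxiliary loci (the intersections $H_y\cap X\cap \mathcal H_{Y,[t_0:t_1]}\cap T$ for the conic, and $S_{y_0}\cap L_{0,1}$ for the line) so that Bruce--Wall's criterion yields $A_2$ rather than $A_3$ or worse. Since those verifications are already done, the only thing to be careful about here is the counting: making sure that a general hyperplane meets $\overline Y$ transversally in exactly $d$ points even when $Y$ degenerates (two incident lines, two conics meeting at a point, etc.), so that no singular point of $S$ is ``lost'' or doubled, and that the finitely many non-$Y$ points of $\Sing(X)$ genuinely do not meet a general $H$. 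All of this is routine given the setup, so the proposition is best presented as a short corollary of the case analysis.
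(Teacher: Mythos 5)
Your proposal matches the paper exactly: the paper introduces this proposition with ``Summing up what we have proved so far is the following,'' i.e.\ it is precisely the bookkeeping consolidation of the case-by-case propositions of subsections 3.1--3.4 that you describe, with the count $\#(Y\cap H)=\deg Y=d$ and one $A_r$ point per intersection point. Your additional remarks on transversality, the rank of the restricted tangent cone, and the Bruce--Wall criterion in the $r=2$ cases faithfully reproduce the arguments already carried out in those individual propositions, so nothing is missing.
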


\begin{comment}
Next sections are devoted to the proof of theorem \ref{normal_quintic}. As already anticipated, this is done adopting to the singular case the two methods applied in \ref{Dima_Tikho} to prove the existence of smooth normal ellitpic quintics.

We will exhibit two different methods to prove the existence of the AG elliptic quintic $\C$ on a normal cubic threefold $X$. 
The first method relies on a deformation argument and allows us to prove that  on certain normal threefolds there always exists a \textit{smooth} normal elliptic quintic; the second method is constructive and produces directly AG elliptic quintics.

In the singular case both methods rely on the study of curves on a general hyperplane section of $X$. From what we have just explained, 
for a general hyperplane $H\subset \D{P}^4$, the intersection $S:=X\cap H$ is a cubic surface with isolated singularities that are at most rational double points (RDPs for short). The study of curves on $S$ will then reduce to the study of curves on a minimal resolution of singularities of $S$:
$$ \phi: \tilde{S} \to S.$$

\vspace{2mm}

A cubic surface $S\subset \D{P}^3$ presenting rational double points is a del Pezzo surface of degree 3.
A minimal resolution $\tilde{S}\to S$ is a so called \textit{weak Del Pezzo surface} of degree 3.
\end{comment}

%\input{normal.tex}
%\section{Singular cubic surfaces and weak Del Pezzo surfaces}
%\input{wdp.tex}
\section{Existence of smooth normal elliptic quintics}
In this section we use the notions previously introduced to show the existence of smooth elliptic quintics on certain normal cubic threefolds.
The method that we adopt to prove the existence of these curves relies on a deformation argument: it consists indeed in first proving that there exists a \textit{smooth} quintic elliptic curve $\mathcal{C}_0$ contained in a hyperplane section of $X$\DM{; then it is shown} that $\mathcal{C}_0$ deforms to a \DM{nondegenerate}  curve $\mathcal{C}$ \DM{in $X$}.
A smooth curve $\C$ obtained in this way is \DM{always} AG. Indeed, \DM{any smooth nondegenerate elliptic quintic $\C$ is ACM by Remark \ref{projnormcurve} and by the projective normality of the smooth curves of genus $g\geq 1$ embedded by complete linear systems of degree $\geq 2g+1$ (see \cite[Corollary from Theorem 6]{Mumford});} as moreover $\mathcal{O}_{\C}\simeq \omega_{\C}$, $\C$ is subcanonical, \DM{hence it is} AG.
%Nondegenerate smooth elliptic quintics exist on smooth cubic threefolds. 
\DM{
	%The smooth case being known, we have only to handle the singular one. 
	We prove the following:}\smallskip
\begin{thm}\label{smooth_quintic}Let $X$ be \DM{a} normal cubic threefold that is not a cone and that satisfies one \DM{of the following two} conditions: 
	\begin{itemize}
		\item $X$ has isolated singularities;
		\item $\DM{\Sing}(X)$ contains a curve $Y$ that is either a line or a conic of first type or the union of three non-coplanar lines meeting at a point. 
	\end{itemize} 
	Then there exists a smooth \DM{nondegenerate} quintic elliptic curve $\C\subset X.$
\end{thm}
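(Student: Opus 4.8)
The plan is to adapt to the singular case the deformation argument of \cite{Dima-Tikho}: first produce a smooth elliptic quintic $\C_0$ inside a general hyperplane section of $X$, disjoint from its singular locus, and then show that $\C_0$ deforms inside $X$ to a non-degenerate curve $\C$ that still avoids $\Sing(X)$. Such a $\C$ is then a smooth non-degenerate elliptic quintic contained in $X_{sm}\subset X$, as wanted.

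\textbf{Step 1 (a smooth elliptic quintic in a hyperplane section).} Under either hypothesis, a general hyperplane section $S:=H\cap X$ is a cubic surface whose only singularities are nodes: if $X$ has isolated singularities then a general $H$ misses $\Sing(X)$ and $S$ is smooth by Bertini; if $\Sing(X)$ contains a curve $Y$ that is a line, a conic of first type, or three non-coplanar concurrent lines, then by Proposition \ref{sing-hs} and the analysis of Section 3, $S$ has exactly $\deg(Y)\le 3$ singular points, all of type $A_1$ and located at $Y\cap H$. Let $\phi\colon\tilde S\to S$ be the minimal resolution: $\tilde S$ is a weak del Pezzo surface of degree $3$ whose exceptional locus is a disjoint union of $(-2)$-curves $E_1,\dots,E_k$ with $k\le 3$. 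On $\tilde S$ I would exhibit a curve class $D$ with $D\cdot(-K_{\tilde S})=5$, $D^2=5$, $p_a(D)=1$ and $D\cdot E_i=0$ for all $i$ --- for instance the proper transform of the union of a general anticanonical curve of $S$ with two disjoint lines of $S$ that avoid $\Sing(S)$, whose class is $-K_S+\ell_1+\ell_2$; such a class exists by the standard description of curves on (weak) del Pezzo surfaces of degree $3$, and a general member $\tilde\C_0\in|D|$ is smooth, irreducible, and disjoint from every $E_i$ (since $D\cdot E_i=0$ and $E_i$ is not in the fixed part of $|D|$). Hence $\C_0:=\phi(\tilde\C_0)$ is a smooth elliptic quintic contained in $S\subset H\simeq\D{P}^3$. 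A planar curve of degree $5$ has arithmetic genus $6$, so $\C_0$ spans $H$; and $\C_0$ is disjoint from $\Sing(S)\supseteq\Sing(X)\cap H$, so, being contained in $H$, it is disjoint from $\Sing(X)$ and hence $\C_0\subset X_{sm}$.

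\textbf{Step 2 (the deformation).} Since $X$ and $S$ are smooth along $\C_0$, the normal bundle $N_{\C_0/X}$ fits into
$$0\longrightarrow N_{\C_0/S}\longrightarrow N_{\C_0/X}\longrightarrow N_{S/X}|_{\C_0}\longrightarrow 0,$$
where $N_{S/X}|_{\C_0}\simeq\mathcal{O}_{\C_0}(1)$ and $N_{\C_0/S}\simeq\mathcal{O}_S(\C_0)|_{\C_0}$ are line bundles of degree $5$ on the elliptic curve $\C_0$ (here $\C_0^2=5$ by adjunction on $S$, using $\omega_S\simeq\mathcal{O}_S(-1)$). Therefore $h^1(\C_0,N_{\C_0/X})=0$ and $h^0(\C_0,N_{\C_0/X})=10$, so the Hilbert scheme of $X$ is smooth of dimension $10$ at $[\C_0]$; let $\mathcal{H}$ be the component through $[\C_0]$. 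On the other hand, the locus of curves of $\mathcal{H}$ contained in some hyperplane is closed and maps to ${\D{P}^4}^*$ by sending such a curve to the hyperplane it spans, with fibre over $H$ contained in the linear system $|\C_0|$ on $H\cap X$, of dimension $5$ (Riemann--Roch on $\tilde S$, since $h^1(\mathcal{O}_{\tilde S}(D))=h^2(\mathcal{O}_{\tilde S}(D))=0$). Thus this locus has dimension at most $4+5=9<10$, so a general member of $\mathcal{H}$ is non-degenerate. Since ``being smooth'' and ``being disjoint from the finite set $\Sing(X)$'' are open conditions satisfied at $[\C_0]$, a general $[\C]\in\mathcal{H}$ is a smooth curve contained in $X_{sm}$; its Hilbert polynomial is still $5n$, so $\C$ is the desired smooth non-degenerate quintic elliptic curve.

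I expect the main difficulty to lie in Step 1: it relies on Segre's and Bruce--Wall's classifications (recalled in Section 3) to pin down exactly which singular cubic surfaces arise as hyperplane sections, and then on checking, for each resulting weak del Pezzo surface $\tilde S$, that a linear system of elliptic quintics orthogonal to all the $(-2)$-curves exists and has a smooth general member disjoint from the exceptional locus. Once $h^1(N_{\C_0/X})=0$ is in place, Step 2 is just the dimension count above.
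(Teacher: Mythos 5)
Your overall strategy is the paper's: Step~2 (the normal bundle sequence, $h^1(\mathcal N_{\C_0/X})=0$, and the $9<10$ dimension count) is essentially verbatim the argument in the paper, and Step~1 likewise passes to the minimal resolution $\tilde S\to S$ and seeks a class $D$ with $D^2=D\cdot(-K_{\tilde S})=5$ orthogonal to all $(-2)$-curves. The gap is in your choice of $D$. You take $D=-K_{\tilde S}+\ell_1+\ell_2$ for two \emph{disjoint} lines $\ell_1,\ell_2$ avoiding $\Sing(S)$, but such a pair need not exist. In the third case of the theorem ($\Sing(X)$ three non-coplanar concurrent lines) a general hyperplane section $S$ has exactly $3A_1$ singularities (Prop.~\ref{sing-hs}), and on a cubic surface with three nodes the only lines disjoint from $\Sing(S)$ are the three classes $l-e_1-e_2$, $l-e_3-e_4$, $l-e_5-e_6$ (taking the effective roots to be $e_1-e_2$, $e_3-e_4$, $e_5-e_6$, which is representative since $W(\bE_6)$ acts transitively on $3A_1$ configurations): every $e_i$, every cross-pair class $l-e_i-e_j$, and every class $2l-\sum_{i\ne k}e_i$ has nonzero intersection with some root. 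These three regular lines satisfy $(l-e_i-e_j)\cdot(l-e_k-e_m)=1$, so they pairwise meet --- the same phenomenon as in the $4A_1$ case, where the paper notes the three regular lines are coplanar. Hence your construction produces no admissible $D$ precisely in one of the cases the theorem is supposed to cover. The paper's choice $D=R-K_{\tilde S}+2E$, with $R$ a $(-2)$-curve and $E$ a $(-1)$-curve meeting $R$ once and orthogonal to the other $(-2)$-curves (Claim~\ref{claim-ort}, proved by exhibiting a line through exactly one node), works uniformly for $r\le 3$ nodes and is the step your argument would need to replace.

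A secondary issue: even when two disjoint regular lines do exist ($r\le 2$), you assert rather than prove that a general member of $|D|$ is irreducible, smooth and disjoint from the exceptional locus. This is exactly the content of the paper's Proposition~\ref{bp_free}, whose proof is not formal: one must show $|D|$ is nef, has no fixed component (via the numerical decomposition argument and Kawamata--Viehweg), that its general member is irreducible, and that it is base-point free by restricting to $D$ and using that a degree-$5$ line bundle on a curve of arithmetic genus $1$ has no base points. The rest of your write-up (identification of the hyperplane sections via Section~3, the adjunction computations, and the deformation step) matches the paper.
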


From \DM{Theorem} \ref{smooth_quintic} we will deduce:
\begin{prop}\label{Ulrich_smooth} Let $X$ be a cubic threefold satisfying one of the hypotheses of \ref{smooth_quintic}. Then there \DM{exists} a rank 2 skew-symmetric Ulrich bundle $\E\in \DM{\Coh\,}(X)$. 
\end{prop}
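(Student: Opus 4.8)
The plan is to deduce Proposition \ref{Ulrich_smooth} directly from Theorem \ref{smooth_quintic} via the Serre correspondence machinery already set up in Section 2, and then check local freeness of the resulting sheaf. First I would invoke Theorem \ref{smooth_quintic} to obtain a smooth, nondegenerate elliptic quintic $\C\subset X$; as observed just before Theorem \ref{smooth_quintic}, such a curve is automatically AG (it is ACM by projective normality of genus-one curves embedded by degree-$5$ complete linear systems, and subcanonical since $\omega_\C\simeq\mathcal O_\C$). Hence $\C$ is a normal AG elliptic quintic in the sense of Section 2.2, and applying Serre's construction to the generator $1\in H^0(\mathcal O_\C)\simeq\Ext^1(\mathcal I_\C,\mathcal O_X(-2))$ yields the short exact sequence (\ref{ACM_Serre}), hence a rank $2$ ACM sheaf $\mathcal N$ on $X$, and after twisting, $\mathcal E=\mathcal N(2)$ with $h^0(\mathcal E)=6=2\cdot 3$ and $h^0(\mathcal E(-1))=0$. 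By Proposition \ref{nondeg_Ulrich}, $\mathcal E$ is Ulrich of rank $2$.

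The remaining point — and the main obstacle — is to show that $\mathcal E$ (equivalently $\mathcal N$) is locally free, after which Remark \ref{Ulrich_lf} gives the skew-symmetric structure for free. By the discussion following Proposition \ref{nondeg_Ulrich}, the only possible singular points of $\mathcal E$ lie on $\C$, and at such a point $x\in\C$ one tests freeness using Serre's criterion (Proposition \ref{Serre}): $\mathcal N_x$ is free if and only if the class $1_x\in\Ext^1_{\mathcal O_{X,x}}(\mathcal I_{\C,x},\mathcal O_{X,x})$ generates this module. I would argue as follows. The curve $\C$ produced by Theorem \ref{smooth_quintic} is \emph{smooth} and, by the construction in Section 4, \emph{disjoint from $\Sing(X)$}; hence every $x\in\C$ is a smooth point of $X$ and a smooth point of $\C$, so $\C$ is locally a complete intersection in the regular local ring $\mathcal O_{X,x}$. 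For such an $x$, $\mathcal I_{\C,x}$ is generated by a regular sequence of length $2$, the Koszul resolution shows $\Ext^1_{\mathcal O_{X,x}}(\mathcal I_{\C,x},\mathcal O_{X,x})\simeq\Ext^2_{\mathcal O_{X,x}}(\mathcal O_{\C,x},\mathcal O_{X,x})\simeq\mathcal O_{\C,x}$ is cyclic, and the class corresponding to the globally-defined extension (\ref{ACM_Serre}) restricts to a generator (it corresponds to $1\in H^0(\mathcal O_\C)$, which generates $\mathcal O_{\C,x}$ as an $\mathcal O_{\C,x}$-module). Thus $\mathcal N_x$ is free at every $x\in\C$, and $\mathcal N$ is locally free everywhere.

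Therefore $\mathcal E=\mathcal N(2)$ is a rank $2$ Ulrich bundle on $X$. Finally, by Remark \ref{Ulrich_lf}, since $\mathcal E$ is locally free of rank $2$ with $\deg(\mathcal E)=2$ (computed from (\ref{twisted})), the canonical isomorphism $\mathcal E\xrightarrow{\sim}\mathcal E^\vee\otimes\bigwedge^2\mathcal E\simeq\sheafhom(\mathcal E,\mathcal O_X(2))$ is skew-symmetric, so $\mathcal E$ is a rank $2$ skew-symmetric Ulrich bundle, which is what Proposition \ref{Ulrich_smooth} asserts. The only genuinely delicate step is the local-freeness verification, and there the crucial input is precisely the feature of Theorem \ref{smooth_quintic} that the quintic avoids $\Sing(X)$ and is smooth, reducing everything to the local complete intersection computation above; I would expect the write-up to spell out the identification of the local $\Ext^1$ with $\mathcal O_{\C,x}$ and the fact that the chosen extension class is a unit there.
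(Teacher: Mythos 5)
Your proposal is correct and follows essentially the same route as the paper: take the smooth nondegenerate quintic from Theorem \ref{smooth_quintic}, apply the Serre construction to $1\in H^0(\mathcal O_{\C})\simeq\Ext^1(\mathcal I_{\C},\mathcal O_X(-2))$, verify local freeness at points of $\C$ by observing that the class $1$ is nowhere vanishing and generates $\mathcal O_{\C,x}\simeq\Ext^1_{\mathcal O_{X,x}}(\mathcal I_{\C,x},\mathcal O_{X,x})$ (the paper cites Propositions \ref{pd_1} and \ref{Serre}, while you make the identification explicit via the Koszul resolution at the smooth points of $X$, which is the same content), and conclude with Proposition \ref{nondeg_Ulrich} and Remark \ref{Ulrich_lf}.
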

\DM{Consequently:}
\begin{cor}\label{cor-pf} Let $X$ be a cubic threefold satisfying \DM{one of} the hypotheses of \ref{smooth_quintic}. Then $X$ is Pfaffian.
\end{cor}

\paragraph{Illustration of the method in the smooth case}

\

The smooth case was treated in detail in \cite{Dima-Tikho}. \DM{As the proof of the singular case is obtained, in part, by an extension of the method of proof for the smooth case, we start by recalling it. So, first suppose} 
that $X$ is smooth. Applying Bertini's theorem, for a general hyperplane $H\subset \D{P}^4$, the intersection $S:=X\cap H$ is a smooth cubic surface.
A smooth cubic surface $S$ is a Del Pezzo surface of degree 3, hence it can be realized as the blowup of $\D{P}^2$ \DM{in} 6 points $p_1, \dots, p_6$ in \textit{general position}.
Denote by $\pi: S\to \D{P}^2$ the corresponding birational morphism and write $\Pic(S)= \D{Z}l \oplus_{i=1}^6 \D{Z} e_i$ where %as usual 
$l$ is the class of $\pi^*(L)$ for a generic line $L\subset \D{P}^2$ and the $e_i$s are the classes of the exceptional %$(-1)$-curve  
curves $E_0,\dots, E_6, \ E_i=\pi^{-1}(p_i)$.
\begin{comment}

The linear system $|-K_S|$ is very ample ( cf. \DM{\cite[Ch. V, Thm 4.6]{Hartshorne}}) and defines an embedding $\phi_{|-K_S|}: S\to \D{P}^3$ 
whose image is a smooth surface (that from now on we will still denote by $S$) of degree $3=K_S^2$. %esatto il grado è dato dalla self intersection
\end{comment}
\

Choose now 4 points $p_1,\dots p_4$ among the $p_i$s and consider the  linear system 

$$\D{P}^5\simeq|3l-p_1-p_2-p_3-p_4|\subset |\mathcal{O}_{\D{P}^2}(3)|%, \ \hspace{5mm} |3l-p_1-p_2-p_3-p_4|\simeq \D{P}^5
$$ of plane cubics passing through them.
For every $C_3 \in |3l-p_1-p_2-p_3-p_4|$ the strict transform $\pi^{-1}(C_3)$ is an elliptic \DM{curve (}this can be computed on $S$ by adjunction or from $g(\pi^{-1}(C_3))=g(C_3)=1$), belonging to the class $(3,-1,-1,-1,-1,0,0) \in \DM{\Pic\,}(S)$. %n.b. il fatto che sia effettivo lo è per costruzione perchè ho la scrittura esplicita della equazione facendo la strict transform.
The linear system $\DM|(3,-1,-1,-1,-1,0,0)\DM|$ is base-point free \DM(this can be deduced from the fact that, by \cite{Hartshorne} Ch. V Thm 4.6, the class $(3,-1,-1,-1,-1)$ is very ample on $\DM{\Bl}_{p_1,\dots p_4}\D{P}^2$); thus, by Bertini's theorem, a general element $\mathcal{C}_0\in \DM|(3,-1,-1,-1,-1,0,0)\DM|$ is smooth.
We compute that:
$$\DM{\deg\,}(\mathcal{C}_0)=(-K_s) \cdot\mathcal{C}_0=5$$
hence $\mathcal{C}_0$ is a smooth elliptic quintic contained in $S$.

\vspace{2mm}
In order to show that $\mathcal{C}_0$ deforms in $X$ to a \DM{nondegenerate} curve $\mathcal{C}$ we first need to check that $\DM{\Hilb}_X^{5n}$, the Hilbert scheme of elliptic quintics on $X$, is smooth at the point $[\mathcal{C}_0]$.
%This happens if and only if $h^1(\mathcal{N}_{\mathcal{C}_0/X})=0$. 
To this aim we consider the short exact sequence of sheaves on $\mathcal{C}_0$:

\begin{equation}\label{ses-normal}
0 \longrightarrow \mathcal{N}_{\mathcal{C}_0/S} \longrightarrow \mathcal{N}_{\mathcal{C}_0/X}\longrightarrow {\mathcal{N}_{S/X}}_{|{\mathcal{C}_0}}\longrightarrow 0.
\end{equation}
$\mathcal{N}_{\mathcal{C}_0/S}\simeq \mathcal{O}_{\mathcal{C}_0}(\mathcal{C}_0)$ is a line bundle of degree $\mathcal{C}_0^2=5$, the same holds for \mbox{$ {\mathcal{N}_{S/X}}_{|{\mathcal{C}_0}}\simeq \mathcal{O}_{\mathcal{C}_0}(1)$}.
Hence $h^0(\mathcal{N}_{\mathcal{C}_0/S})=h^0({\mathcal{N}_{S/X}}_{|{\mathcal{C}_0}})=5$, and $h^1(\mathcal{N}_{\mathcal{C}_0/S})=h^1({\mathcal{N}_{S/X}}_{|{\mathcal{C}_0}})=0$.
From these equalities we get that $h^1(\mathcal{N}_{\mathcal{C}_0/X})=0$, $h^0(\mathcal{N}_{\mathcal{C}_0/X})=10$, thus $\DM{\Hilb}_X^{5n}$ is smooth of dimension 10 at $[\C_0]$ (\cite{Gro}). The linear system $|\mathcal{C}_0|$ on $S=H\cap X$ has dimension 5; letting the hyperplane $H$ \DM{vary} in ${\D{P}^4}^{*}$, we see that the family of elliptic quintics contained in a hyperplane section of $X$ has dimension 9. As $\DM{\Hilb}_X^{5n}$ at $\mathcal{C}_0$ is smooth and of dimension 10, we can finally conclude that $\mathcal{C}_0$ deforms to a \DM{nondegenerate} smooth elliptic quintic $\mathcal{C}$.

\DM{\subsubsection* {The singular case} }
We now describe how the method that we have just presented \DM{also adapts} to certain singular cases (here we are still supposing that $X$ has no triple points).
The idea is the following: we look for a smooth elliptic quintic $\mathcal{C}_0$ contained in a hyperplane section $S:=X\cap H$ of $X$ and such that 
$\mathcal{C}_0 \cap \DM{\Sing}(S)=\emptyset$. 
The fact that $\mathcal{C}_0$ doesn't pass through any of the singular points of $S$, implies that $\mathcal{C}_0$ is entirely contained in $X_{sm}$, the smooth locus of $X$. We thus still get the short exact sequence of line bundles on $\mathcal{C}_0$ \ref{ses-normal} that allows us to compute again that $\DM{\Hilb}_X^{5n}$ at $\mathcal{C}_0$ is smooth and of dimension 10. 
Arguing as above, we conclude that $\mathcal{C}_0$ deforms, in $X_{sm}$, to a \DM{nondegenerate} smooth elliptic quintic $\mathcal{C}$. 
%In order to prove the existence of $\mathcal{C}_0$, we will consider a minimal resolution of singularities of $S$:
The existence of $\mathcal{C}_0$ is proved considering a minimal resolution of $S$:
$$ \phi: \tilde{S} \to S$$
and constructing a linear system on $\tilde{S}$ whose generic element $\tilde{\mathcal{C}}_0$ is mapped by $\phi$ onto a smooth elliptic quintic $\mathcal{C}_0:=\phi(\tilde{\mathcal{C}}_0)\subset S$.
\begin{comment} 
When $X$ is a singular threefold satisfying the hypotheses of theorem \ref{smooth_quintic}, a general hyperplane section $S$ of $X$ is a cubic surface with at most $r A_1$ singularities, $r\le 3$ (cf. Prop. \ref{sing-hs}). Consider $\tilde{S}$ a degree 3 weak Del Pezzo surface, resolution of $S$. To start with we exhibit how to construct a linear system on $\tilde{S}$ whose generic element is a smooth curve $C$ such that $C^2=C\cdot (-K_{\tilde{S}})=5$.

In the singular case both methods rely on the study of curves on a general hyperplane section of $X$. From what we have just explained, 
for a general hyperplane $H\subset \D{P}^4$, the intersection $S:=X\cap H$ is a cubic surface with isolated singularities that are at most rational double points (RDPs for short). The study of curves on $S$ will then reduce to the study of curves on a minimal resolution of singularities of $S$:
$$ \phi: \tilde{S} \to S.$$
\end{comment}
From what had been formerly explained, for a general hyperplane $H\subset \D{P}^4$, the intersection $S:=X\cap H$ is a cubic surface with isolated singularities that are at most rational double points (RDPs for short).
A cubic surface $S\subset \D{P}^3$ presenting rational double points is a del Pezzo surface of degree 3.
A minimal resolution $\tilde{S}\to S$ is a so called \textit{weak Del Pezzo surface} of degree 3.

\

\subsection{ Del Pezzo and weak Del Pezzo surfaces}
We recall here some basic properties about Del Pezzo and weak Del Pezzo surfaces, focusing in particular on the case of surfaces of degree 3.
The main references for the section are \cite{CAG} and \cite{Pinkham}.
A \textit{Del Pezzo surface} is a \DM{nondegenerate} irreducible surface $S$ of degree $d$ in $\D{P}^d$ that is not a cone and that is not isomorphic to a projection of a surface of degree $d$ in $\D{P}^{d+1}$. It is possible to prove (see e.g. \cite{CAG} Thm. 8.1.11) that if $S$ is a del Pezzo surface, all its singularities are rational double points and its anticanonical sheaf $\omega_S$ is an ample line bundle. 
A cubic surface with at most RDP is an example of a Del Pezzo surface and since it is a degree 3 hypersurface in the 3-dimensional projective space, its canonical sheaf is $\omega_S^{-1}\simeq \mathcal{O}_S(1)$.

\begin{comment}
It is possible to prove that, more generally, the following holds:

\begin{thm}[\cite{CAG} Thm 8.1.3]Let $S$ be a Del Pezzo surface of degree $d$ in $\D{P}^d$. Then all its singularities are rational double points and $\omega_S^{-1}$ is an ample invertible sheaf.
\end{thm}
\end{comment}

%magari mettere qua il fatto che l'anticanonico è ampio.
It is well known (this is proven e.g. in \cite{CAG} Thm. 8.1.15), that if $S$ is a \textit{singular} Del Pezzo surface, a minimal resolution of $S$, $\phi:\tilde{S}\to S$, is a so called \textit{weak Del Pezzo surface}. %Before describing how the resolution $\phi$ is defined, we give some preliminary notions on weak Del Pezzo surfaces.  
\begin{defn}A weak Del Pezzo surface is a smooth surface $\tilde{S}$ such that the anticanonical class $-K_{\tilde{S}}$ is nef and big.
	The degree $d$ of $\tilde{S}$ is defined as the number $d:=K_{\tilde{S}}^2$.
\end{defn}

Any weak Del Pezzo surface $\tilde{S}$ is isomorphic either to $\D{P}^1\times \D{P}^1$, to the Hizerbruch surface $\D{F}_2$ (both of degree 8) or to the surface $X_N$ (of degree $d=9-N$) obtained from $\D{P}^2$ by blowing up $N$ points, $0\le N \le 8$ in \textit{almost general position}, i.e. we have a map $\pi:\tilde{S}\to \D{P}^2$ that is a composition 
$$\tilde{S}\xrightarrow{\sim}X_N\xrightarrow{\pi_N} X_{N-1}\xrightarrow{\pi_{N-1}} \dots \xrightarrow{\pi_1} \D{P}^2:=X_0$$
where each morphism $X_{i}\xrightarrow{\pi_i} X_{i-1}$ is the blowup at a point $p_i \in X_{i-1}$.
We say that the points $p_1, \dots p_N$ lie in almost general position if at each step, the point $p_i$ does not belong to any irreducible curve in $X_{i-1}$ having self intersection $-2$. 

%From now on, whenever $p_i \in E_{i-1}, \ E_{i-1}:=\pi_i^{-1}(p_{i-1})$, we will write $p_i\succ_{1} p_{i-1}$. 
%(Roughly speaking we are blowing up an N-tuple of points $p_1, \dots p_N$ allowing also configurations with couples of points $p_i, \ p_j$ such that $p_i$ is ``infinitely near'' $p_j$.) %, just as for the case of a smooth cubic surface, we are still  blowing up an N-tuple of points $p_1, \dots p_N$, but this time we are also allowing configurations with couples of points $p_i, \ p_j$ such that $p_i$ is ``infinitely near'' $p_j$.

\

A weak Del Pezzo surface is a smooth Del Pezzo surface if and only if it is isomorphic either to $\D{P}^1\times \D{P}^1$ or to the blow-up $Bl_{p_1,\dots p_N}\D{P}^2$ of $\D{P}^2$ in $N, \ 0\le N\le 8$ points in \textit{general position} (cfr. \cite{CAG} Cor. 8.1.17). $N$ points $p_1, \dots p_N$ are said to be in general position if no three among them are collinear, no six points among them lie on a smooth conic and no cubic passes through all the points with one of them being a singular point.

%It is proved in \cite{Pinkham}, II Thm. I that the points $p_1, \dots p_N$ lie in general position if and only if the anticanonical class $|-K_{X_N}|$ of the surface $X_N=Bl_{p_1,\dots, p_N}$ is ample. Some authors define indeed a Del Pezzo surface as a smooth surface whose anticanonical divisor is ample (whilst our definition does not require smoothness but still implies the ampleness of $\omega_S$). 
%From now on we will work with weak Del Pezzo surfaces of degree 3.

\paragraph{The structure of the Picard group}
\

%\textbf{Qua devo spiegare perchè ottengo una determinata \DM{Dynkin} curve, col diagramma ed il tipo di singolarità conseguente}
Let $\tilde{S}$ be a weak Del Pezzo surface %of degree 3. Due to the aforementioned classification, $\tilde{S}$ 
isomorphic to the blow-up $\pi: X_N\to \D{P}^2$ of $\D{P}^2$ % $X_N, \ N\le 8$, the blow-up of $\D{P}^2$ 
%$$X_N\xrightarrow{\pi_N} X_{N-1}\xrightarrow{\pi_{N-1}} \dots \xrightarrow{\pi_1} \D{P}^2:=X_0$$
in $N$ points $p_1,\ldots, p_N$ in almost general position. The Picard group $\Pic(\tilde{S})$ of $\tilde{S}$ is the free abelian group of rank $N+1$:
$$\DM{\Pic\,}(\tilde{S})=\pi^*(\DM{\Pic\,}(\D{P}^2))\oplus \D{Z}e_1\oplus \dots \oplus \D{Z}e_N=\D{Z}l\oplus \D{Z}e_1\oplus \dots \oplus \D{Z}e_N,$$
%From the construction of $X_N$, we see that its Picard group $\DM{\Pic\,}(X_N)$ is a free abelian group of rank $N+1$. 
with $l$ the class of $\pi^*(L)$ for a generic line $L\subset \D{P}^2$ and $e_i$ the class of the total transform in $\tilde{S}$ of $\pi_i^{-1}(p_i)$, the exceptional divisor of $\pi_i$. We compute immediately that the canonical class is:
$$K_{\tilde{S}}=-3l +e_1-\dots +e_N.$$

The intersection product defines a symmetric bilinear form on $\DM{\Pic\,}(\tilde{S})$ that, in the basis $(l,e_1,\dots e_N)$, is represented by the matrix 
\begin{comment}
Since we have:
$$l^2=1, \hspace{4mm} e_i^2=-1, \hspace{4mm} e_i \cdot e_j=0, \ 0<i\ne j;$$
we see that in the basis $(l,e_1,\dots, e_N)$, the intersection product is represented by the matrix 
\end{comment}
$\DM{\diag}(1,-1,\dots, -1)$. This endows $\DM{\Pic\,}(\tilde{S})$ with the structure of a unimodular lattice of signature $(1,N).$
The orthogonal ${K_{\tilde{S}}}^{\perp}$ to $K_{\tilde{S}}$ is a sublattice of type $E_N$ and the restriction of the intersection product to this sublattice is non-degenerate. The group of automorphisms of $E_N$ preserving the intersection product is called the \textit{Weyl group} and it is usually denoted by $W(E_N)$ (see \cite{CAG} Ch. 8, Sect.2 for further details).

\subsubsection*{Negative curves on Weak del Pezzo surfaces}
Let $\tilde{S}$ be a weak Del Pezzo surface isomorphic to the surface $X_N, \ 1\le N\le 8$.
An irreducible and reduced curve $C\subset \tilde{S}$ is called a \textit{negative curve} if $C^2=-n, \ n>0$.
Applying adjunction formula and by the nefness of $K_{\tilde{S}}$, we compute that for a negative curve $C\subset \tilde{S}$, we must have $C^2=-n, \ n\in\{1,2\}$. 
\begin{comment}
By adjunction formula we can see that 
Given a negative curve $C\subset \tilde{S}$ it is easy to see, using adjunction formula, that because of the nefness of $-K_{\tilde{S}}$, we must have $n \in \{1,2\}$. 
\end{comment}
In the former case we will call $C$ a $(-1)$-curve, in the latter $C$ is referred to as a $(-2)$-curve.
Still using adjunction we can prove that any negative curve $C\subset \tilde{S}$ has arithmetic genus $p_a(C)=0$, so that $C\simeq \D{P}^1$, and that if ever $C^2=-1$ then $K_{\tilde{S}}\cdot C=-1$, if $C^2=-2$ instead, $K_{\tilde{S}}\cdot C=0$.
%Given a negative curve $C\subset \tilde{S}$ still using adjunction it is possible to prove that $p_a(C)=0$, hence $C\simeq \D{P}^1$, and that if ever $C^2=-1$ then $K_{\tilde{S}}\cdot C=-1$, if $C^2=-2$ instead, $K_{\tilde{S}}\cdot C=0$.

It is shown in \cite{Pinkham}, Thm III.1. that $\tilde{S}$ is a smooth Del Pezzo surface %(or equivalently $p_1,\ldots p_N$ lie in \textit{general position}) 
if and only if the only negative curves on $\tilde{S}$ are $(-1)$-curves.
%Thus on a smooth Del Pezzo, the only negative curves are $(-1)$-curves. (Note that by adjunction a -1 curve has genus 0.)
If we ask for the $N$-tuple $p_1,\ldots, p_N$ to satisfy the weaker condition of being in almost general position, $\tilde{S}$ can also contain $(-2)$-curves.

From now on, as a negative curve does not move in its linear equivalence class, we will identify it with its class in $\DM{\Pic\,}(\tilde{S})$.
Let now $D$ be any effective divisor with $D^2=-2$; write $D$ as $\sum_i n_i R_i$ with the $R_i$s effective and irreducible divisors in $\tilde{S}$, and $n_i\ge 0, \forall \ i$. % Questa è la definizione di divisore effettivo
%Take now a representative of $\alpha$ of the form $\sum_i n_i R_i$ with the $R_i$s effective and irreducible divisors in $\tilde{S}$. 
Since 
$$0=D\cdot K_{\tilde{S}}=\sum_i n_i (R_i\cdot K_{\tilde{S}})$$ %\ \text{and} \ n_i \ge 0 \ \forall \ i,$$ 
and as $-K_{\tilde{S}}$ is nef, we get that $R_i\cdot K_{\tilde{S}}=0, \ \forall\ i$; therefore, by adjunction, \mbox{$\DM{\deg\,}(K_{R_i})= R_i^2\ge -2$.} 
The $R_i$s are effective divisors, hence $R_i\cdot R_j\ge 0$ whenever $i\ne j$; %$ we have that $R_i\cdot R_j\ge 0$;
these inequalities together with the condition $D^2=-2$ imply that \mbox{$R_i^2=-2, \forall \ i$.} 
So, summing up, each effective divisor $D$ with $D^2=-2$ has a (unique) representative  expressed as a linear combination, with non negative coefficients, of $(-2)$-curves.
An example of effective divisors with self intersection $-2$ is given by \textit{Dynkin curves}.
\begin{defn} A \DM{\textit{Dynkin curve}} on $\tilde{S}$ is a reduced connected curve $R$ such that its irreducible components $R_i$ 
	are $(-2)$-curves and the intersection matrix $(R_i\cdot R_j)$ is a symmetric integer matrix with: 
	$$(R_i\cdot R_j)\ge 0 \hspace{2mm} \text{for}\: i\ne j \hspace{1cm} \text{and} \hspace{1cm} (R_i\cdot R_i)=-2.$$
\end{defn}
%Questa cosa mi dice che è una Cartan matrix
To a \DM{Dynkin} curve $R$ we can associate a connected graph $\Gamma_{R}$, called \text{Dynkin-Coxeter diagram}. To each irreducible component $R_i$ of $R$ corresponds a vertex $v_i$, weighted by $R_i\cdot R_i$; the vertices $v_i$ and $v_j$ are connected by $R_i\cdot R_j$ edges.  
It is known (cfr. \cite{CAG} Sect.8.2) that a connected graph obtained in this way is of ADE type; more precisely of the types $A_n, \ D_n, \ E_6, \ E_7, \ E_8$.
We will call ADE type of a Dynkin curve $R$, the ADE type of the corresponding Dynkin-Coxeter diagram.

We conclude the section stating a useful criterion allowing to determine when an element $D\in \Pic(\tilde{S})$ such that $D^2=D\cdot K_{\tilde{S}}=-1$, corresponds to the class of a $(-1)-$curve.

\begin{prop}\label{crit_irred}[\cite{CAG}. Lemma 8.2.22]
	Let $\tilde{S}$ be a weak del Pezzo surface of degree $d> 1$ and let $D$ be a divisor class with $D^2=D\cdot K_{\tilde{S}}=-1.$ Then $D=E+R$ where $R$ is a non-negative sum of $(-2)$-curves, and $E$ is a $(-1)$-curve. Morevoer $D$ is a $(-1)$-curve if and only if for
	each $(-2)$-curve $R_i\subset \tilde{S}$, we have $R_i\cdot D \ge 0$. 
	
\end{prop}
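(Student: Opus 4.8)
The plan is to show first that the class $D$ is represented by an effective divisor, then to strip off from it a single $(-1)$-curve leaving a residual non-negative sum of $(-2)$-curves, and finally to decide when that residual part vanishes. For the effectivity step I would run Riemann--Roch on the smooth rational surface $\tilde S$: since $\chi(\mathcal O_{\tilde S})=1$ and $D\cdot(D-K_{\tilde S})=D^2-D\cdot K_{\tilde S}=0$, one gets $\chi(\mathcal O_{\tilde S}(D))=1$. By Serre duality $h^2(D)=h^0(K_{\tilde S}-D)$; as $-K_{\tilde S}$ is nef and $(K_{\tilde S}-D)\cdot(-K_{\tilde S})=-K_{\tilde S}^2+D\cdot K_{\tilde S}=-d-1<0$, no effective divisor can be numerically equivalent to $K_{\tilde S}-D$, so $h^2(D)=0$ and hence $h^0(D)\ge 1$. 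Thus $D$ has an effective representative, which I write as $D=\sum_i n_i C_i$ with the $C_i$ pairwise distinct irreducible curves and $n_i\ge 1$.

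Next, since $-K_{\tilde S}$ is nef, each $C_i\cdot(-K_{\tilde S})\ge 0$, and $\sum_i n_i\bigl(C_i\cdot(-K_{\tilde S})\bigr)=D\cdot(-K_{\tilde S})=1$ forces exactly one component, say $C_1$, to satisfy $n_1=1$ and $C_1\cdot(-K_{\tilde S})=1$, with $C_i\cdot K_{\tilde S}=0$ for all $i\ge 2$. I set $E:=C_1$ and $R:=\sum_{i\ge 2}n_i C_i$, so $D=E+R$. For $i\ge 2$, the Hodge index theorem against the big and nef class $-K_{\tilde S}$ gives $d\,C_i^2\le\bigl(C_i\cdot(-K_{\tilde S})\bigr)^2=0$, so $C_i^2\le 0$; the value $C_i^2=0$ is excluded, since by the equality case of Hodge index it would make $C_i$ numerically trivial, impossible for a nonzero effective curve. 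Hence, being even and $\ge -2$ by adjunction ($p_a(C_i)\ge 0$), $C_i^2=-2$ and $p_a(C_i)=0$, i.e. $C_i$ is a $(-2)$-curve, and $R$ is a non-negative sum of $(-2)$-curves. For $E$, adjunction gives $E^2=2p_a(E)-1$, an odd integer $\ge -1$, while Hodge index gives $d\,E^2\le\bigl(E\cdot(-K_{\tilde S})\bigr)^2=1$; since $d>1$ this forces $E^2\le 0$, hence $E^2=-1$ and $p_a(E)=0$, so $E$ is a $(-1)$-curve. This establishes the decomposition $D=E+R$, and $d>1$ is used exactly here (on a degree $1$ weak del Pezzo, $-K_{\tilde S}$ itself satisfies $D^2=D\cdot K_{\tilde S}=-1$ but is not a $(-1)$-curve).

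It remains to prove the equivalence. If $D$ is a $(-1)$-curve it is irreducible and reduced with $D^2=-1\ne -2$, hence distinct from every $(-2)$-curve $R_i$; distinct irreducible curves meet non-negatively, so $R_i\cdot D\ge 0$ for all $(-2)$-curves $R_i$. Conversely, suppose $R_i\cdot D\ge 0$ for every $(-2)$-curve $R_i$, and assume for contradiction that $R\ne 0$. Expanding $-1=D^2=(E+R)^2=E^2+2E\cdot R+R^2=-1+2E\cdot R+R^2$ gives $2\,E\cdot R=-R^2$; since $R$ is a nonzero element of $K_{\tilde S}^\perp$, on which the intersection form is negative definite, $R^2<0$, so $D\cdot R=(E+R)\cdot R=E\cdot R+R^2=\tfrac12 R^2<0$. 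But $D\cdot R=\sum_{i\ge 2}n_i\,(D\cdot C_i)$ is a non-negative combination of the numbers $D\cdot R_i\ge 0$, a contradiction. Hence $R=0$ and $D=E$ is a $(-1)$-curve.

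The main obstacle, though it is short once set up, is the step fixing $E^2=-1$: a priori the "positive part" of $D$ could be an irreducible curve of higher arithmetic genus meeting $-K_{\tilde S}$ transversally in a single point, and ruling this out is precisely what the hypothesis $d>1$ buys us via the Hodge index inequality $d\,E^2\le 1$. The remainder is formal bookkeeping with the nef class $-K_{\tilde S}$, adjunction, and the negative-definiteness of $K_{\tilde S}^\perp$ recalled above.
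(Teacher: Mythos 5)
The paper does not actually prove this proposition: it is quoted directly from Dolgachev (\cite{CAG}, Lemma 8.2.22), so there is no internal proof to compare against. Your argument is a correct, self-contained proof along the standard lines (essentially the same as the one in the cited reference): Riemann--Roch together with $h^2(D)=h^0(K_{\tilde S}-D)=0$ (forced by the nefness of $-K_{\tilde S}$) gives effectivity; the identity $\sum_i n_i\,C_i\cdot(-K_{\tilde S})=1$ isolates a single reduced component $E$ of anticanonical degree one; adjunction plus the Hodge index inequality $d\,E^2\le 1$ (this is where $d>1$ enters) and the negative definiteness of the form on $K_{\tilde S}^{\perp}$ pin down $E^2=-1$ and $C_i^2=-2$ for the remaining components; and the equivalence follows from the computation $D\cdot R=\tfrac12 R^2<0$ when $R\ne 0$. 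All of these steps check out. The only flaw is the parenthetical illustration of why $d>1$ is needed: on a degree-one weak del Pezzo surface the class $-K_{\tilde S}$ has $(-K_{\tilde S})^2=K_{\tilde S}^2=+1$, so it does \emph{not} satisfy $D^2=-1$ and is not an example of the failure; the genuine obstruction in degree one is that an irreducible component $E$ with $E\cdot(-K_{\tilde S})=1$ could a priori have $E^2=1$ and $p_a(E)=1$ (e.g.\ an irreducible anticanonical member), which is exactly the case your Hodge index step excludes when $d\ge 2$. This slip is confined to an aside and does not affect the validity of the proof.
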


\paragraph{Weak Del Pezzo surfaces of degree 3}\label{res_wdp}

\

From now on we will always deal with weak del Pezzo surfaces of degree $d=3$. Due to the classification reported at the beginning of the section, a degree $3$ weak Del Pezzo surface $\tilde{S}$ is obtained from $\D{P}^2$ blowing up 6 points in almost general position. Keeping the notations previously adopted, we have that $\Pic(\tilde{S})= \D{Z}l \oplus_{i=1}^6 \D{Z} e_i\simeq \D{Z}^7$ and $K_{\tilde{S}}=(-3,1,1,1,1,1,1)$, 
Morevoer since we are restricting to the case $d=3$ the following holds:
\begin{thm}[\cite{CAG} Thm 8.3.2] 
	
	\begin{itemize}
		\item $|-K_{\tilde{S}}|$ has no base points.
		\item Let $\mathfrak{R}$ be the union of $(-2)$-curves on $\tilde{S}$. Then $|-K_{\tilde{S}}|$ defines a regular map 
		$$\phi:\tilde{S}\to \D{P}(H^0(-K_{\tilde{S}})^*)\simeq \D{P}^d$$ 
		which is an isomorphism outside $\mathfrak{R}$. The image $S$ of this map is a Del Pezzo surface of degree $d$.
		The image of each connected component of $\mathfrak{R}$ is a rational double point of $S$.
	\end{itemize}
\end{thm}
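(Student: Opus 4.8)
The plan is to deduce everything from the single fact that $-K_{\tilde S}$ is nef and big with $(-K_{\tilde S})^2 = d = 3$, combined with the analysis of negative curves on $\tilde S$ carried out above. First I would compute the anticanonical cohomology: writing $-K_{\tilde S} = K_{\tilde S} + (-2K_{\tilde S})$ with $-2K_{\tilde S}$ nef and big, the Kawamata--Viehweg vanishing theorem (equivalently Ramanujam vanishing over $\D{C}$) gives $H^i(\tilde S, \mathcal O_{\tilde S}(-K_{\tilde S})) = 0$ for $i \ge 1$, whence Riemann--Roch yields $h^0(-K_{\tilde S}) = (-K_{\tilde S})^2 + 1 = 4$. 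So $|-K_{\tilde S}|$ induces a rational map to $\D{P}(H^0(-K_{\tilde S})^{\vee}) \cong \D{P}^3$.

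Next I would prove that $|-K_{\tilde S}|$ is base-point free and, more precisely, that $h^0(\mathcal I_Z(-K_{\tilde S})) = 4 - \operatorname{length}(Z)$ for every zero-dimensional subscheme $Z \subset \tilde S$ of length $\le 2$ supported away from $\mathfrak R$. The mechanism is to blow up $Z$: if $\sigma : Y \to \tilde S$ is the blow-up at a reduced point $x$ with exceptional curve $E_x$, then $-K_Y = \sigma^{\ast}(-K_{\tilde S}) - E_x$ and $h^0(\mathcal I_x(-K_{\tilde S})) = h^0(Y, -K_Y)$; for $x \notin \mathfrak R$ the surface $Y$ is again a weak del Pezzo surface, now of degree $2$, so $h^0(-K_Y) = (-K_Y)^2 + 1 = 3$. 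The length-$2$ case is handled the same way, the relevant auxiliary surface being generically a weak del Pezzo surface of degree $1$, with the exceptional positions (two points on a common $(-1)$-curve, a tangent vector along one, etc.) disposed of by peeling off the negative curve that becomes a fixed component, after which a Riemann--Roch count again gives the value $2$. If instead $x$ lies on a $(-2)$-curve $R$, then $-K_Y$ fails to be nef, the strict transform of $R$ is fixed, and removing it identifies $h^0(Y, -K_Y)$ with $h^0(\tilde S, \mathcal O_{\tilde S}(-K_{\tilde S} - R))$; iterating over the components of the connected component of $\mathfrak R$ through $x$ --- a finite procedure controlled by its Dynkin type --- again returns $3$. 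Hence $|-K_{\tilde S}|$ is base-point free, defines a morphism $\phi : \tilde S \to \D{P}^3$, and $\phi$ is injective and immersive along $\tilde S \setminus \mathfrak R$.

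Then I would analyze $\phi$. Since $-K_{\tilde S}$ is big, $\phi$ is generically finite, say of degree $e$, onto its image $S$ of degree $m$, with $e\,m = (-K_{\tilde S})^2 = 3$; as $S$ is a surface it is nondegenerate of degree $\ge 3$ in $\D{P}^3$, forcing $m = 3$, $e = 1$, so $\phi$ is birational and $S$ is an irreducible cubic hypersurface. A reduced irreducible curve $C$ is contracted by $\phi$ exactly when $(-K_{\tilde S}) \cdot C = 0$; by adjunction and the nefness of $-K_{\tilde S}$ this forces $C$ to be a $(-2)$-curve, so $\phi$ contracts precisely the components of $\mathfrak R$ and is an isomorphism onto its image on the complement (by the separation statement above). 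Finally, each connected component of $\mathfrak R$ is a Dynkin curve of some ADE type by the discussion of negative curves above, and the contraction of such a configuration on a smooth surface is a rational double point (Du Val / Artin). Since a cubic surface in $\D{P}^3$ carrying only rational double points is not a cone --- the vertex of a cubic cone is never a rational double point --- nor a projection of a degree-$3$ surface in $\D{P}^4$ --- such a projection is singular along a curve --- and is anticanonically polarized via $\phi^{\ast}\mathcal O_S(1) = -K_{\tilde S}$, it is a del Pezzo surface of degree $3$, and $\phi$ is its minimal resolution.

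The main obstacle is the base-point freeness together with the separation statement for $\phi$ away from $\mathfrak R$: because $-K_{\tilde S}$ is only nef and big and not ample, the vanishing theorems do not apply directly to the twists $-K_{\tilde S} - Z$ one needs in order to control the (infinitesimal) base locus, so one must either localize those vanishings to the complement of the $(-2)$-curves or, as above, descend inductively to weak del Pezzo surfaces of lower degree while bookkeeping the interaction with the $(-2)$-curves along their Dynkin diagrams --- exactly the point where the geometry of the $(-2)$-curves, set up earlier, does the real work.
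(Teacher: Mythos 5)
The paper does not prove this statement at all: it is quoted verbatim from Dolgachev (\cite{CAG}, Thm.\ 8.3.2) and used as a black box, so there is no internal proof to compare against. Your reconstruction follows the standard Demazure--Dolgachev argument and is correct in outline: Kawamata--Viehweg plus Riemann--Roch for $h^0(-K_{\tilde S})=d+1$; base-point freeness and separation via descent to weak del Pezzo surfaces of lower degree (equivalently, by noting that a section of $-K_{\tilde S}$ vanishing at a point of a $(-2)$-curve $R$ vanishes on all of $R$, since $\mathcal O_R(-K_{\tilde S})\cong\mathcal O_{\mathbb P^1}$); the degree count $e\,m=(-K_{\tilde S})^2$ forcing birationality; identification of the contracted curves with the $(-2)$-curves via adjunction; and Artin/Du Val for the rational double points. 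What you gain over the paper is an actual proof; what you lose is that several of the steps you describe as ``a finite procedure controlled by the Dynkin type'' are exactly where the content lives in Dolgachev's treatment.

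Two spots deserve attention. First, a harmless slip: a nondegenerate irreducible surface in $\mathbb P^3$ has degree $\geq 2$, not $\geq 3$ (the bound is $\operatorname{codim}+1$); the conclusion $m=3$, $e=1$ still follows from $e\,m=3$ and $m\geq 2$. Relatedly, your claim that the blow-up at $x\notin\mathfrak R$ is again a weak del Pezzo surface needs the verification that no irreducible curve $C$ satisfies $\operatorname{mult}_x(C)>(-K_{\tilde S})\cdot C$; this is true but requires a Hodge-index computation (curves of anticanonical degree $1$ or $2$ are forced to be smooth rational), and without it the inductive step is circular, since knowing which curves pass through $x$ with which multiplicity is essentially the separation statement you are trying to prove. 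Second, and more substantively: your separation claim is only formulated for length-$\leq 2$ subschemes supported \emph{away} from $\mathfrak R$. That shows $\phi$ is injective and unramified on $\tilde S\setminus\mathfrak R$, but it does not by itself exclude that a point of $\tilde S\setminus\mathfrak R$ is identified with the image of a connected component of $\mathfrak R$, which would contradict the assertion that $\phi$ restricts to an isomorphism from $\tilde S\setminus\mathfrak R$ onto $S\setminus\phi(\mathfrak R)$. You need either to extend the separation argument to pairs with one point on $\mathfrak R$, or (cleaner) to observe that the cubic image $S$ is Cohen--Macaulay with finite singular locus, hence normal by Serre's criterion, and then invoke Zariski's main theorem to get connectedness of the fibres of the birational morphism $\phi$. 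With those two points repaired the argument is complete.
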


% We saw that the anticanonical class of $X_N$ is represented in $\DM{\Pic\,}(X_N)$ by the vector $(3,-1,-1, \dots, -1)= 3l-(\sum_{i_1}^N e_i)$. It has self intersection $K_{X_N}^2=9-N$.

The birational morphism $\phi:\tilde{S}\to S$ is a resolution of $S$ that contracts %the surface $S$. This map contracts 
each \DM{Dynkin} curve $R \subset \tilde{S}$ to a RDP $p_{R}$ of $S$; $p_{R}$ is a a singularity of the type of the \DM{Dynkin} diagram of $R$. 
Given now a curve $\tilde{C}\subset \tilde{S}$, let $C \subset S$ be its image through $\phi$. $C$ passes through the singular point $p_{R}$ if and only if $\tilde{C}$ intersects a component of the \DM{Dynkin} curve $R$ contracted to $p_{R}$.
All the possible configurations of Dynkin curves on $\tilde{S}$, or equivalently all the possible configurations of RDPs on $S$ are the following (see for example \cite{Bruce_Wall}):
$$ A_1,\: A_2,\: A_3,\: A_4,\: A_5,\: 2A_1,\: A_1+A_2,\:2A_2,\: A_3+A_1,\: A_4+A_1,\: A_5+A_1$$
$$3 A_1,\: 2 A_1+ A_2,\: 2A_2+A_1,\: 3 A_2,\: A_3+2 A_1,\: D_4,\: D_5,\: E_6.$$

\subsection{Smooth normal elliptic quintics}

We illustrate now how to produce smooth elliptic quintics on a Del pezzo surface $S$ of degree 3. These will be of the form $\phi(\tilde{C})$ for a smooth curve $\tilde{C}\subset \tilde{S}$ such that $\tilde {C}^2=\tilde{C}\cdot (-K_{\tilde{S}})=5$, where $\phi:\tilde{S}\to S$ is a minimal resolution of $S$. 

\begin{prop}\label{bp_free}Let $\tilde{S}$ be a weak Del Pezzo surface of degree 3, $R\subset \tilde{S}$ a $(-2)$-curve and $E\subset \tilde{S}$ a $(-1)$-curve such that $R\cdot E=1$.
	Then the linear system $|R-K_{\tilde{S}}+ 2E|$ is base point free.
	
\end{prop}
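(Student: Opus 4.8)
The plan is to prove that the base locus of $|D|$, where $D:=R-K_{\tilde S}+2E$, is contained in $R\cup E$, and then to rule out base points on $R$ and on $E$ separately, each by a single cohomological step. First I would record the numerical data: using $R^2=-2$, $E^2=-1$, $K_{\tilde S}^2=3$, $R\cdot K_{\tilde S}=0$, $E\cdot K_{\tilde S}=-1$ and the hypothesis $R\cdot E=1$, one gets
$$D^2=5,\qquad D\cdot K_{\tilde S}=-5,\qquad D\cdot E=0,\qquad D\cdot R=0,$$
so in particular $p_a(D)=1$; together with the fact that $R$ and $E$ are both isomorphic to $\D{P}^1$, these are the only computations the argument uses.

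\emph{Confining the base locus.} Since $R+2E$ is effective, pick a section $\sigma\in H^0(\tilde S,\mathcal{O}_{\tilde S}(R+2E))$ with divisor $R+2E$. Multiplication by $\sigma$ embeds $H^0(\mathcal{O}_{\tilde S}(-K_{\tilde S}))$ into $H^0(\mathcal{O}_{\tilde S}(D))$, so $|D|$ contains the subsystem $\sigma\cdot|-K_{\tilde S}|$; every member of it contains $R+2E$, and $|-K_{\tilde S}|$ is base point free on a weak del Pezzo surface of degree $3$ by the theorem quoted above (\cite{CAG}, Thm.~8.3.2). Hence the base locus of this subsystem is exactly $R\cup E$, and therefore $\mathrm{Bs}\,|D|\subseteq R\cup E$.

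\emph{Removing the remaining base points.} For $E$, consider the exact sequence
$$0\longrightarrow\mathcal{O}_{\tilde S}(D-E)\longrightarrow\mathcal{O}_{\tilde S}(D)\longrightarrow\mathcal{O}_{\tilde S}(D)|_E\longrightarrow 0,$$
where $\mathcal{O}_{\tilde S}(D)|_E\simeq\mathcal{O}_{\D{P}^1}(D\cdot E)=\mathcal{O}_{\D{P}^1}$ admits a nowhere vanishing section. If $H^1(\mathcal{O}_{\tilde S}(D-E))=0$ this section lifts to some $s\in H^0(\mathcal{O}_{\tilde S}(D))$, which then vanishes at no point of $E$, so $E\cap\mathrm{Bs}\,|D|=\emptyset$; the same argument with $R$ in place of $E$ gives $R\cap\mathrm{Bs}\,|D|=\emptyset$ as soon as $H^1(\mathcal{O}_{\tilde S}(D-R))=0$. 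Thus the proof reduces to the two vanishings $H^1(D-E)=H^1(R-K_{\tilde S}+E)=0$ and $H^1(D-R)=H^1(-K_{\tilde S}+2E)=0$. Both follow by bootstrapping from $H^1(\mathcal{O}_{\tilde S}(-K_{\tilde S}))=0$ (standard for weak del Pezzo surfaces; e.g. Kawamata--Viehweg vanishing, $-2K_{\tilde S}$ being nef and big): the sequence $0\to\mathcal{O}(-K_{\tilde S})\to\mathcal{O}(-K_{\tilde S}+E)\to\mathcal{O}_{\D{P}^1}(0)\to 0$ (as $(-K_{\tilde S}+E)\cdot E=0$) gives $H^1(-K_{\tilde S}+E)=0$, and then the sequences $0\to\mathcal{O}(-K_{\tilde S}+E)\to\mathcal{O}(-K_{\tilde S}+2E)\to\mathcal{O}_{\D{P}^1}(-1)\to 0$ and $0\to\mathcal{O}(-K_{\tilde S}+E)\to\mathcal{O}(R-K_{\tilde S}+E)\to\mathcal{O}_{\D{P}^1}(-1)\to 0$ (the restricted degrees $(-K_{\tilde S}+2E)\cdot E$ and $(R-K_{\tilde S}+E)\cdot R$ both being $-1$) yield $H^1(-K_{\tilde S}+2E)\cong H^1(R-K_{\tilde S}+E)\cong H^1(-K_{\tilde S}+E)=0$, since $\mathcal{O}_{\D{P}^1}(-1)$ has no cohomology. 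Combining with the previous paragraph, $\mathrm{Bs}\,|D|=\emptyset$.

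\emph{Expected difficulty.} There is no serious obstacle: the only genuine point is the reduction $\mathrm{Bs}\,|D|\subseteq R\cup E$, which works precisely because $D\cdot R=D\cdot E=0$ while the ``core'' class $D-R-E=-K_{\tilde S}+E$ is still nef and big, and the rest is a routine diagram chase. The main thing to watch is the bookkeeping of intersection numbers, since a sign slip (most easily in $K_{\tilde S}\cdot E$) would corrupt all the restricted line bundles above and break the vanishings.
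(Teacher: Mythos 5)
Your proof is correct, but it takes a genuinely different route from the paper's. The paper first proves that $D:=R-K_{\tilde S}+2E$ is nef by testing it against every irreducible curve (the only delicate cases being $D'=R$ and $D'=E$, where the intersection is $0$), then applies Kawamata--Viehweg and Riemann--Roch to get $h^0(\mathcal{O}_{\tilde S}(D))=6$, rules out a fixed component by a somewhat lengthy intersection-theoretic argument ($F\cdot K_{\tilde S}=0$, $F^2=0$, hence $F=0$ by nondegeneracy of the form on $K_{\tilde S}^{\perp}$), shows the general member is irreducible, and finally restricts to a general member: since $H^1(\mathcal{O}_{\tilde S})=0$ the map $H^0(\mathcal{O}_{\tilde S}(D))\to H^0(\mathcal{O}_D(D))$ is surjective and $\mathcal{O}_D(D)$ is a degree-$5$ line bundle on a curve of arithmetic genus $1$, hence base point free. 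You instead confine the base locus to $R\cup E$ using the subsystem $(R+2E)+|-K_{\tilde S}|$ together with the base-point-freeness of $|-K_{\tilde S}|$, and then remove the remaining candidates by lifting nowhere-vanishing sections from $R$ and from $E$, which reduces to the two vanishings $H^1(R-K_{\tilde S}+E)=0$ and $H^1(-K_{\tilde S}+2E)=0$, obtained by an easy bootstrap from $H^1(-K_{\tilde S})=0$; all the intersection numbers you rely on ($D\cdot R=D\cdot E=0$ and the restricted degrees $0,-1,-1$) check out. Your argument is shorter and bypasses both the fixed-component analysis and the irreducibility step; what the paper's longer route buys is side information that is reused afterwards, in particular $\dim|D|=5$ and the irreducibility and genus of the general member, which feed into the deformation count and the construction of the smooth elliptic quintic later in the section, whereas your proof establishes exactly the stated base-point-freeness and nothing more.
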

\begin{proof}
	The first thing that we show is that $|R-K_{\tilde{S}} + 2 E|$ is nef.
	Given any irreducible effective divisor $D$ on $\tilde{S}$, we can write: 
	$D\cdot (R-K_{\tilde{S}} + 2 E) =
	(D\cdot R)+ (D\cdot (-K_{\tilde{S}}))+ (2 D\cdot E).$
	
	As $-K_{\tilde{S}}$ is nef, we have $D\cdot (-K_{\tilde{S}})\ge 0$. \DMN{Since} $R$ is a \DMN{$(-2)$-}curve\DMN,
	$D \cdot R < 0$ if and only if $R=D$. Similarly $E\cdot D<0$ if and only if $E= D$.
	Therefore $D\cdot (R-K_{\tilde{S}} + 2 E) \ge 0$ whenever  $D\ne E$ and $D\ne R$.
	
	Now, if $D=E$ we have: 
	\begin{equation}\label{int-2}
	D\cdot (R-K_{\tilde{S}} + 2 E)= 1+1-2=0.
	\end{equation}
	
	If $D=R$ we get: 
	\begin{equation}\label{int-1}
	D\cdot (R-K_{\tilde{S}} + 2 E)=-2+2=0. 
	\end{equation}
	We can thus  conclude that  $|R-K_{\tilde{S}} + 2 E|$ is nef. Given any divisor $D\in  |R-K_{\tilde{S}} + 2 E|$, we compute that
	$D\cdot (-K_{\tilde{S}})=5, \ D^2=D\cdot (-K_{\tilde{S}})= 5$. $D$ is nef, $D^2>0$ thus 
	$D$ is big and nef, consequently $D-K_{\tilde{S}}$ is big and nef as well. Applying Kawamata-Viewheg vanishing theorem we have $h^i(\mathcal{O}_{\tilde{S}}(D)=0$ for $i=1,2.$
	By Riemann-Roch we get $$\chi (\mathcal{O}_{\tilde{S}}(D))= h^0(\mathcal{O}_{\tilde{S}}(D))= \frac{1}{2}(D^2 -K_{\tilde{S}}\cdot D)+1= 6.$$ 
	
	We now prove that $|R-K_{\tilde{S}} + 2 E|$ has no fixed component. 
	\DMN{Indeed, write} $|R-K_{\tilde{S}} + 2 E|$ as $F+ |M|$ where $F$ and $|M|$ are, respectively the fixed and the mobile part of $|R-K_{\tilde{S}} + 2 E|$.
	$M$ is a nef effective divisor satisfying $M^2\ge 0$ and $\dim |M|=\dim |R-K_{\tilde{S}} + 2 E|=5$, \DMN{whilst} the linear system $|F|$ has dimension 0.
	By the fact that $M$ nef, we get that $M -K_{\tilde{S}}$ is big and nef, therefore still by Kawamata-Viewheg: $$\chi(\mathcal{O}_{\tilde{S}}(M))=h^0(\mathcal{O}_{\tilde{S}}(M))=6.$$
	By Riemann Roch $5= \frac{1}{2} (M^2 - M\cdot K_{\tilde{S}})$ hence $M^2=10+ M \cdot K_{\tilde{S}}= 5 - F\cdot K_{\tilde{S}}\ge 5$. 
	Now, by the nefness of $|R-K_{\tilde{S}}+2E|$ we get the inequality: 
	$$5=(R-K_{\tilde{S}} + 2 E)^2=(R-K_{\tilde{S}} + 2 E)\cdot (M+F)\ge (R-K_{\tilde{S}} + 2 E)\cdot M;$$
	whilst from the nefness of $M$ we deduce:
	$$(R-K_{\tilde{S}} + 2 E)\cdot M= (M + F)\cdot M=M^2+ M\cdot F\ge M^2.$$ 
	Therefore $$M^2=5=(R-K_{\tilde{S}} + 2 E)^2= (R-K_{\tilde{S}} + 2 E)\cdot (M+F)=$$ 
	$$=(R-K_{\tilde{S}} + 2 E)\cdot F + (M+F)\cdot M= (R-K_{\tilde{S}} + 2 E)\cdot F + M\cdot F + M^2.$$
	Again, the nefness of $R-K_{\tilde{S}} + 2 E$ and $M$ implies $(R-K_{\tilde{S}} + 2 E)\cdot F\ge 0, \  M\cdot F \ge 0$ so that a fortiori $(R-K_{\tilde{S}} + 2 E)\cdot F= 0, \  M\cdot F = 0$.
	Since $$5=(M+F)^2=M^2 +2 M\cdot F + F^2 = 5 + F^2$$ we get that $F^2=0,$ and as $M^2=5= 5- F\cdot K_{\tilde{S}}$, we deduce $F\cdot K_{\tilde{S}}=0$.
	As the intersection form is non-degenerate on ${(\D{Z}K_{\tilde{S}})}^{\perp}$, we can conclude that $F=0$\DMN{, and thus} $|R-K_{\tilde{S}} + 2 E|$ has no fixed component.
	By the fact that $|R-K_{\tilde{S}} + 2 E|$ has no fixed component, a general element $D$ in it can be written as a sum
	$D=\sum_{i}^m D_i$ where all the $D_i$s are numerically equivalent irreducible curves (this follows from \cite{Friedman}, Ex 5.11). This implies that $\forall\: i\ne j, (D_i-D_j)\cdot D_i=(D_i-D_j)\cdot D_j=0$ hence $D_i^2=D_j^2=D_i\cdot D_j$. Call $a= D_i^2$.
	We obtain $5=D^2= ma + 2(\binom {m}{2} a)= a( m + 2 \binom {m}{2})$. This can only occur for $m=1$ and $a=5$.
	Consider now the short exact sequence:
	\begin{equation}\label{ses-div}
	0 \rightarrow \mathcal{O}_{\tilde{S}} \rightarrow \mathcal{O}_{\tilde{S}}(D)\rightarrow \mathcal{O}_{D}(D)\rightarrow 0.
	\end{equation}
	Since $H^1(\mathcal{O}_{\tilde{S}})=0$, $H^0(\mathcal{O}_{\tilde{S}}(D))\to H^0(\mathcal{O}_{D}(D))$ is surjective so that if ever
	$|R-K_{\tilde{S}}+2E|$ has a base point $p$, we would have that $p\in D$ is a base point of its restriction to $D$.
	But	since $\mathcal{O}_{D}(D)$ is a line bundle of degree 5 on $D$, with $p_a(D)=1\ge g(D)$, it can't have base points (see for example \cite{Pinkham} IV Lemma 1).
\end{proof} 

This result implies that given a cubic surface with at most RDPs $S$ and a minimal resolution $\phi:\tilde{S}\to S$ of $S$, a curve $\C_0=\phi(\tilde{\C_0})$, where $\tilde{\C_0}$ is a general element of a linear system on $\tilde{S}$ of the form $|R-K_{\tilde{S}}+2E|$ with $R,$ and $E$ as in proposition \ref{bp_free}, is a curve having degree 5, arithmetic genus 1 and smooth outside $\mathcal{C}_0\cap \Sing(S)$. For any singular point $p\in \Sing(S)$, calling $R_p\subset \tilde{S}$ the Dynkin curve contracted to $p$, we have $\mult_p(\mathcal{C}_0)=R_p\cdot \tilde{\C_0}$. 
We show now that under the further assumption that $S$ presents at most $rA_1$ singularities, $r\le 3$, there always exists a curve $\tilde{\C_0}\subset \tilde{S}$ as above such that $\tilde{\C_0}\cdot R_p=0$ for any Dynkin curve $R_p\subset \tilde{S}$; accordingly $\mathcal{C}_0=\phi(\tilde{\C_0})$ is a smooth elliptic quintic on $S$ such that $\mathcal{C}_0\cap \Sing(S)=\emptyset.$ 
\begin{comment}
If $p$ is a singular point of $S$, 

We consider now a cubic surface 
We now show how to construct a smooth elliptic quintic $\C_0$ on a cubic surface with at most $r A_1$ singularities, $r\le 3$, and such that $\mathcal{C}_0\cap \Sing(S)=\emptyset.$ 
\end{comment}

\begin{prop}\label{quintic-disjoint} Let $S$ be a cubic surface with $r\: A_1$ singularities, $r\le 3$. Then there exists a smooth quintic elliptic curve $\C_0\subset S$ 
	such that $\C_0\cap \Sing(S)=\emptyset$.
\end{prop}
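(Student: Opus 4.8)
The plan is to produce $\mathcal{C}_0$ as the image, under a minimal resolution $\phi\colon\tilde S\to S$, of a general member $\tilde{\mathcal{C}}_0$ of a linear system of the type handled in Proposition~\ref{bp_free}, chosen so that $\tilde{\mathcal{C}}_0$ is disjoint from every $(-2)$-curve of $\tilde S$, i.e.\ from the exceptional locus of $\phi$; by the discussion preceding the statement this yields exactly a smooth elliptic quintic on $S$ that meets $\Sing(S)$ trivially. If $r=0$ then $S$ is smooth, $\tilde S=S$ carries no $(-2)$-curve, and a general element of $|3l-e_1-e_2-e_3-e_4|$ is already a smooth elliptic quintic (the construction recalled in the smooth case), automatically disjoint from $\Sing(S)=\emptyset$. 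So assume $1\le r\le 3$.

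Since $S$ is a del Pezzo surface of degree $3$ with only $A_1$ singularities, $\tilde S$ is a weak del Pezzo surface of degree $3$, hence the blow-up of $\mathbb{P}^2$ at six points in almost general position, with $\Pic(\tilde S)=\mathbb{Z}l\oplus\bigoplus_{i=1}^6\mathbb{Z}e_i$ and $K_{\tilde S}=-3l+\sum_i e_i$. Each $A_1$ point is resolved by a single $(-2)$-curve, distinct singular points have disjoint preimages, and by \cite{CAG} Thm.~8.3.2 these exhaust the $(-2)$-curves of $\tilde S$; thus the exceptional locus $\mathfrak{R}$ of $\phi$ is a disjoint union $R_1\sqcup\cdots\sqcup R_r$ of $r$ $(-2)$-curves, whose classes form a sub-root-system of type $rA_1$ in $K_{\tilde S}^{\perp}\cong E_6$. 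As $W(E_6)$ acts transitively on its root subsystems of type $rA_1$ for $r\le 3$ (one sees this by taking orthogonal complements: $\alpha^{\perp}$ is of type $A_5$, and inside it the roots orthogonal to a fixed one form an $A_3$, with the respective Weyl groups acting transitively; alternatively invoke the classification of cubic surfaces with $A_1$ singularities, cf.\ \cite{Bruce_Wall}), we may choose the basis $l,e_1,\dots,e_6$ so that $R_i=e_{2i-1}-e_{2i}$ for $i=1,\dots,r$.

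Set $E:=e_2$ and $R:=R_1=e_1-e_2$. Then $E^2=E\cdot K_{\tilde S}=-1$, and $E\cdot R_i\ge 0$ for every $(-2)$-curve $R_i$ of $\tilde S$ (indeed $E\cdot R_1=1$ and $E\cdot R_i=0$ for $i>1$), so by Proposition~\ref{crit_irred} the class $E$ is represented by a $(-1)$-curve; moreover $R$ is a $(-2)$-curve with $R\cdot E=1$. Hence Proposition~\ref{bp_free} applies to $D:=R-K_{\tilde S}+2E=3l-e_3-e_4-e_5-e_6$: the system $|D|$ is base point free, and---invoking Bertini together with the analysis of a general member carried out in the proof of that proposition---a general $\tilde{\mathcal{C}}_0\in|D|$ is a smooth irreducible curve with $\tilde{\mathcal{C}}_0^{\,2}=\tilde{\mathcal{C}}_0\cdot(-K_{\tilde S})=5$, so $p_a(\tilde{\mathcal{C}}_0)=1$ and $\tilde{\mathcal{C}}_0$ is a smooth elliptic curve. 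For each $i$ one computes $D\cdot R_i=R\cdot R_i-K_{\tilde S}\cdot R_i+2\,E\cdot R_i=0$ (namely $-2+0+2$ for $i=1$, and $0+0+0$ for $i>1$); since $R_i$ is irreducible and $R_i^2=-2\neq 5=\tilde{\mathcal{C}}_0^{\,2}$ the two curves share no component, so $\tilde{\mathcal{C}}_0\cdot R_i=0$ forces $\tilde{\mathcal{C}}_0\cap R_i=\emptyset$; thus $\tilde{\mathcal{C}}_0\cap\mathfrak{R}=\emptyset$. As $\phi$ is an isomorphism off $\mathfrak{R}$ and contracts each $R_i$ to a point of $\Sing(S)$, it restricts to an isomorphism $\tilde{\mathcal{C}}_0\xrightarrow{\sim}\mathcal{C}_0:=\phi(\tilde{\mathcal{C}}_0)\subset S\setminus\Sing(S)$, so $\mathcal{C}_0$ is a smooth elliptic curve; and because $\phi$ is given by $|-K_{\tilde S}|$ and restricts to an isomorphism on $\tilde{\mathcal{C}}_0$, we get $\deg\mathcal{C}_0=\tilde{\mathcal{C}}_0\cdot(-K_{\tilde S})=5$. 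Hence $\mathcal{C}_0$ is a smooth quintic elliptic curve with $\mathcal{C}_0\cap\Sing(S)=\emptyset$.

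The point on which everything turns---and the reason the argument is confined to configurations of type $A_1$---is the existence of a single $(-1)$-curve $E$ meeting exactly one of the (pairwise disjoint, at most three) $(-2)$-curves, transversally in one point, and disjoint from the others: this is precisely what makes $D\cdot R_i=0$ hold simultaneously for all $i$, so that the quintic $\phi(\tilde{\mathcal{C}}_0)$ avoids $\Sing(S)$. For a singularity of type $A_2$ or worse the corresponding Dynkin curve is a connected chain of several mutually incident $(-2)$-curves, no such $E$ exists, and the required quintic must instead be produced by the constructive method of Section~5.
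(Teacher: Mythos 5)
Your proof is correct and follows the paper's overall architecture (pass to the minimal resolution, apply Proposition~\ref{bp_free} to a system $|R-K_{\tilde S}+2E|$, check $\tilde{\mathcal C}_0\cdot R_i=0$, push down by $\phi$), but it establishes the crucial existence statement --- Claim~\ref{claim-ort}, a $(-1)$-curve $E$ meeting $R_1$ once and disjoint from the other $(-2)$-curves --- by a different route. The paper proves the claim geometrically: it translates it into the existence of a line on $S$ through $p_1$ and through no other singular point, and finds such a line by analysing the length-six scheme $Z$ cut out on $\{X_0=0\}$ by the tangent quadric cone and the cubic cone at $p_1$, using $r\le 3$ to guarantee a reduced point of $Z$. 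You instead argue lattice-theoretically: $W(E_6)$ acts transitively on $rA_1$ root subsystems for $r\le3$, so the classes of the exceptional $(-2)$-curves can be normalized to $e_{2i-1}-e_{2i}$ and one takes $E=e_2$, with Proposition~\ref{crit_irred} upgrading the class to an irreducible $(-1)$-curve. This is essentially the alternative the paper itself sketches in the remark following its proof, but you carry it out uniformly for all $r\le 3$ rather than only $r=1$; the price is that you must supply (or cite) the transitivity of $W(E_6)$ on $rA_1$ subsystems, which you justify adequately by the orthogonal-complement chain $E_6\supset A_5\supset A_3$. Two small points of care: the phrase ``we may choose the basis so that $R_i=e_{2i-1}-e_{2i}$'' should not be read as producing a new \emph{geometric} marking (applying a Weyl element to a geometric basis of a weak del Pezzo surface need not yield one); what you actually need and use is only the existence of the lattice vector $E=w(e_2)$ together with Proposition~\ref{crit_irred}, which is fine. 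And one should fix the signs so that $w$ carries each $e_{2i-1}-e_{2i}$ to the \emph{effective} root $[R_i]$ rather than its negative (composing with the reflection in that root if necessary), since otherwise $E\cdot R_1=-1$. With these caveats the argument is complete; the paper's geometric proof buys an explicit line on $S$, while yours is shorter and makes the $r\le3$ hypothesis appear as a purely root-theoretic constraint.
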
  
\begin{proof} 	We have already illustrated how to obtain elliptic quintics on a smooth cubic surface, hence we consider the cases where $1\le r \le 3$. Let $\phi:\tilde{S}\to S$ be a  minimal resolution of $S$. Denote by $p_1,\cdots p_r$ the singular points of $S$ and by $R_{p_1}, \dots R_{p_r}$
	the corresponding $(-2)-$curves on $\tilde{S}$ (since we are assuming that $S$ presents just $A_1$ singularities, each Dynkin curve on $\tilde{S}$ is a $(-2)$-curve). We claim the following:
	\begin{claim}\label{claim-ort} There always exists a $(-1)$-curve $E$ such that $E\cdot R_{p_1}=1$ and $E\cdot R_{p_i}=0$ for $i\ne 1$.
	\end{claim}
	We first show how, assuming the claim, the proposition follows. If such a $(-1)$-curve $E$ exists, we have that, by proposition \ref{bp_free}, the general element 
	$\tilde{\C}_0$ in $|R_{p_1}-K_{\tilde{S}}+2 E|$ is a smooth curve with ${\tilde{\C}_0}^2=5= \tilde{\C}_0\cdot (-K_{\tilde{S}})$.
	We compute that $$\tilde{\C}_0 \cdot R_{p_1}= R_{p_1}^2 + 2 E\cdot R_{p_1}=0, \hspace{2mm} \tilde{\C}_0 \cdot R_{p_i}= R_{p_1}\cdot R_{p_i}+ 2 E\cdot R_{p_i}=0 \ \text{for}\ i\ne 1.$$
	Therefore the curve $\C_0:=\phi(\tilde{\C_0})$ is a smooth elliptic quintic curve on $S$ disjoint from $\Sing(S)$.
	We end the proof of the proposition demonstrating the claim. Showing that a $(-1)$-curve $E$ satisfying the hypotheses of the claim exists is equivalent to showing that on $S$ there always exists a line $L$ passing through $p_1$ and such that $p_i \notin L$ for $i\ne 1$ (this is due to proposition \ref{crit_irred}, that establishes a bijection between $(-1)$-curves on $\tilde{S}$ and lines on $S$) .
	
	We choose coordinates $X_0, \dots, X_3$ on $\D{P}^3$ in such a way that $p_1=[1:0:0:0]$.
	Let $f \in \D{C}[X_0,\dots, X_3]$ be the homogeneous polynomial of degree 3 defining $S$. For our choice of coordinates $f$ can be written as 
	$$f(X_0,\cdots, X_3)= X_0 q(X_1,X_2,X_3) + c(X_1,X_2,X_3)$$
	where $q$ and $c$ are homogeneous forms of degree, respectively, 2 and 3. Since $p_1$ is an $A_1$ singularity, it is a conic node, hence $q$ is a quadric of rank 3.
	The intersection of the hyperplane $\{X_0=0\}$ with the cones having equations $\{q(X_0,X_1,X_2)=0\}$ and $\{c(X_0,X_1,X_2)=0\}$ is a zero-dimensional scheme $Z$ of length 6.
	All the lines contained in $S$ and passing through $p_1$ are of the form $\overline{p_1q}$ for $q \in Z$.
	A line $\overline{p_1q}, q\in Z$ passes through a singular point $p_i$ different from $p_1$ if and only if the conic $C_2:=\{X_0=q(X_1,X_2,X_3)=0\}$ and the plane cubic $C_3:=\{X_0=c(X_1,X_2,X_3)\}$ meet at $q$ with multiplicity greater then one. 
	Moreover the point $p_i$ will be an $A_{k-1}$ singularity where $k$ is the multiplicity of the intersecion of $C_2$ and $C_3$ at $q$.
	Since we are assuming that $S$ has at most r $A_1$ singularities, $C_2$ and $C_3$ meets in $r-1$ points with multiplicity $2$ implying that $Z$ consists of $r-1$ double points and of $6-2(r-1)$ simple points. Since $r\le 3$ there always exists then a point $q\in Z$ where $C_2$ and $C_3$ meets transversely so that $\overline{qp_1}$ is a line in $S$ satisfying
	$\overline{p_1q}\cap \Sing(S)=\{p_1\}$.

\end{proof}	 

\paragraph{Remark} Claim \ref{claim-ort} can also be proved by writing down explicitly an element $E \in \Pic(\tilde{S})$ such that $E^2=E\cdot K_{\tilde{S}}=-1$, intersecting with multiplicity one just one among the $(-2)$-curves and orthogonal to all the others, 
%with multiplicity one and orthogonal to all other $-2$-curves, 
for all possible configuration of $(-2)$-curves leading to $r$ $A_1$ singularities. Such an element $E\in \Pic(\tilde{S})$ will then correspond to the class of a $(-1)$ curve due to proposition \ref{crit_irred}.
More precisely it is sufficient to show the existence of $E$ just on one representative $\tilde{S}$ of each $W(\bE_6)$ orbit of weak Del Pezzo surfaces of $r$ $A_1$ type. Indeed let $\tilde{S}, \ \tilde{S}'$ be a couple of weak del Pezzo surfaces containing, respectively, $r$ pairwise orthogonal $(-2)$-curves $\{R_1,\dots R_r\}$ and 
$\{R_1', \cdots R_{r}'\}$. Suppose the existence of an element $s \in W(\bE_6)$ inducing a bijection between $\{R_{1}, \cdots, R_{r}\}$ and $\{R_1', \cdots R_{r}'\}$. 
If $E\in \Pic(\tilde{S})$ is such that $$E^2=E\cdot K_{\tilde{S}}=-1, \hspace{2mm}\text{and} \ E\cdot R_1=1, \ E\cdot R_i=0\ \text{for} \ i\ne 1,$$ then $s(E)$ satisfies  $${s(E)}^2=s(E)\cdot K_{\tilde{S'}}=-1, \hspace{2mm} \text{and} \ s(E)\cdot s(R_1)=1, \ s(E)\cdot s(R_i)=0 \ \text{for} \ i\ne 1,$$ hence $s(E)$ is the class of a $(-1)$-curve (still by \ref{crit_irred}) meeting just one $(-2)$-curve. 

This is particularly easy for $r=1$. Consider indeed $\tilde{S}$ and $\tilde{S'}$ each of them containing just a $(-2)$-curve $R\subset \tilde{S}, \ R'\subset \tilde{S'}$. The Weyl group $W(E_6)$ acts transitively on the set of \textit{roots} of $\tilde{S}$ (see \cite{CAG} Thm. 8.2.14), namely elements $D\in \Pic(\tilde{S})$ such that $D\cdot K_S=0, \ D^2=-2$, therefore there always exists $s\in W(\bE_6)$ such that $s(R)=R'$. 
Without loss of generality we can thus assume that $\tilde{S}$ is obtained as the blowup: 
$$\tilde{S}\simeq \Bl_{\{p_1,\dots,p_6\}}\D{P}^2, \hspace{3mm} p_i \in \D{P}^2, \hspace{3mm} |h-p_1-p_2-p_3|\ne \emptyset$$ 
of 6 proper points $p_1,\dots p_6$ on $\D{P}^2$ with just 3 of them, say $p_1, \ p_2, \ p_3$ lying on a line $L$. 
The strict transform of $L$ is a $(-2)-$curve $R$ (represented by the vector $(1,-1,-1,-1,0,0,0)$ in $\Pic(\tilde{S}))$), each exceptional divisor $E_i=\pi^{-1}(p_i), \ i=1,2,3$ is a $(-1)-$curve such that $E_i\cdot R=1$. Hence the generic elements of the classes 
$$(4, 0, -2, -2, 0, 0, 0), \hspace{3mm} (4, -2,0,-2,0,0,0), \hspace{3mm} (4, -2,-2,0,0,0,0)$$
are mapped by $\pres$ to a smooth elliptic quintic lying in the smooth locus of the surface $S:=\pres(\tilde{S})$.

We can now prove Theorem \ref{smooth_quintic}:

\begin{proof}[Proof of Theorem \ref{smooth_quintic}]
	Let $X$ be a cubic threefold satisfying the hypotheses of theorem \ref{smooth_quintic}. Then, by proposition \ref{quintic-disjoint}, on a general hyperplane section
	$S:=H\cap X$ there always exists a smooth elliptic quintic $\C_0$ disjoint from $\Sing(S)$. The condition  \mbox{$\C_0\cap \Sing(S)=\emptyset$} implies that $\C_0$ is entirely contained in $X_{sm}$, the smooth locus of $X$. The argument used in the smooth case applies verbatim to $X_{sm}$ so that we conclude that $\mathcal{C}_0$ deforms in $X_{sm}$ to a \DM{nondegenerate}  smooth elliptic quintic $\mathcal{C}\subset X$.
\end{proof}

Applying theorem \ref{smooth_quintic} we can prove the following:
\begin{prop}[Prop. \ref{Ulrich_smooth}] Let $X$ be a cubic threefold satisfying the hypotheses of \ref{smooth_quintic}. Then there exists a rank 2 skew-symmetric Ulrich bundle $\E\in \DM{\Coh\,}(X)$. 
\end{prop}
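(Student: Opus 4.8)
The plan is to feed the smooth nondegenerate elliptic quintic provided by Theorem \ref{smooth_quintic} into the Serre correspondence of Section 2, and then to verify that the rank 2 Ulrich sheaf obtained this way is in fact a vector bundle, so that it automatically acquires the skew-symmetric structure described in Remark \ref{Ulrich_lf}.

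First I would apply Theorem \ref{smooth_quintic} to get a smooth nondegenerate elliptic quintic $\mathcal C\subset X$; by the deformation argument in its proof, $\mathcal C$ is entirely contained in $X_{sm}$. As recalled just before the statement of Theorem \ref{smooth_quintic}, such a curve is AG: it is ACM by Remark \ref{projnormcurve} together with the projective normality of curves of genus $\ge 1$ embedded by a complete linear system of degree $\ge 2g+1$, and it is subcanonical because $\omega_{\mathcal C}\simeq\mathcal O_{\mathcal C}$. Since $X$ is integral, applying the construction of Sections 2.1--2.2 to $\mathcal C$ produces a rank 2 ACM sheaf $\mathcal N$ fitting in (\ref{ACM_Serre}) and, after twisting, the sheaf $\mathcal E:=\mathcal N(2)$ sitting in (\ref{twisted}); the computations $h^0(\mathcal E)=6=2\cdot 3$ and $h^0(\mathcal E(-1))=0$ (the latter from the non-degeneracy of $\mathcal C$), combined with Proposition \ref{nondeg_Ulrich}, show that $\mathcal E$ is a rank 2 Ulrich sheaf on $X$.

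The key point is then to show that $\mathcal E$ is locally free. For $x\notin\mathcal C$ this is immediate, since $\mathcal I_{\mathcal C,x}\simeq\mathcal O_{X,x}$ forces (\ref{Serre_local}) to split. For $x\in\mathcal C$, I would use that $\mathcal C$, being a smooth curve inside the smooth threefold $X_{sm}$, is a local complete intersection of codimension 2: thus $\mathcal I_{\mathcal C,x}$ is generated by a regular sequence $(f,g)$, whose Koszul complex
\[
0\longrightarrow \mathcal O_{X,x}\xrightarrow{\ (-g,f)\ }\mathcal O_{X,x}^{\oplus 2}\xrightarrow{\ (f,g)\ }\mathcal I_{\mathcal C,x}\longrightarrow 0
\]
is the length $1$ free resolution of Proposition \ref{pd_1}. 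Applying $\Hom_{\mathcal O_{X,x}}(-,\mathcal O_{X,x})$ yields an isomorphism $\Ext^1_{\mathcal O_{X,x}}(\mathcal I_{\mathcal C,x},\mathcal O_{X,x})\simeq \mathcal O_{X,x}/(f,g)=\mathcal O_{\mathcal C,x}$, a cyclic $\mathcal O_{X,x}$-module generated by the image of $1$. Since the global extension (\ref{ACM_Serre}) was defined as the image of the unit $1\in H^0(\mathcal O_{\mathcal C})$ under $H^0(\mathcal O_{\mathcal C})\simeq\Ext^1(\mathcal I_{\mathcal C},\mathcal O_X(-2))$, and this identification is compatible with localization, the local extension class $1_x$ of (\ref{Serre_local}) is precisely this generator. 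By Serre's criterion (Proposition \ref{Serre}), $\mathcal N_x$, hence $\mathcal E_x$, is a free $\mathcal O_{X,x}$-module. Therefore $\mathcal E$ is a rank 2 Ulrich bundle on $X$, and by Remark \ref{Ulrich_lf} it carries the natural skew-symmetric isomorphism $\mathcal E\xrightarrow{\ \sim\ }\sheafhom(\mathcal E,\mathcal O_X(2))$, so $\mathcal E$ is a rank 2 skew-symmetric Ulrich bundle.

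The only genuinely delicate step is the local analysis at points $x\in\mathcal C$, namely matching the abstract generator of $\Ext^1_{\mathcal O_{X,x}}(\mathcal I_{\mathcal C,x},\mathcal O_{X,x})$ coming from the complete intersection structure with the extension class $1_x$ induced by the global section; this is what makes Proposition \ref{Serre} applicable. Everything else is a matter of assembling results already established in Sections 1--4, together with the observation that $\mathcal C\subset X_{sm}$ guarantees the local complete intersection property used above.
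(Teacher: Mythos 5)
Your proposal is correct and follows essentially the same route as the paper: feed the smooth nondegenerate elliptic quintic from Theorem \ref{smooth_quintic} into the Serre correspondence, check the Ulrich property via Proposition \ref{nondeg_Ulrich}, verify local freeness by showing the localized extension class $1_x$ generates $\Ext^1_{\mathcal{O}_{X,x}}(\mathcal{I}_{\mathcal{C},x},\mathcal{O}_{X,x})$ so that Propositions \ref{pd_1} and \ref{Serre} apply, and conclude skew-symmetry from Remark \ref{Ulrich_lf}. The only cosmetic difference is that you identify $\Ext^1_{\mathcal{O}_{X,x}}(\mathcal{I}_{\mathcal{C},x},\mathcal{O}_{X,x})\simeq\mathcal{O}_{\mathcal{C},x}$ explicitly via the Koszul complex of the local complete intersection $\mathcal{C}\subset X_{sm}$, whereas the paper obtains the same identification from the global isomorphism $\mathcal{O}_{\mathcal{C}}\simeq\sheafext^1(\mathcal{I}_{\mathcal{C}},\mathcal{O}_X(-2))$ and the nonvanishing of the section $1\in H^0(\mathcal{O}_{\mathcal{C}})$.
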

\begin{proof} Let $X$ be a cubic threefold satisfying the hypotheses of theorem \ref{smooth_quintic}. From what we have just proved, there exists then a smooth quintic elliptic curve $\C\subset X$ that is not contained in any hyperplane section of $X$. 
	Applying Serre's correspondence, the class $1 \in H^0(\mathcal{O}_{\C})\simeq \DM{\Ext}^1(\mathcal{I}_{\C}, \mathcal{O}_X(-2))$ gives rise to a short exact sequence
	$$0\longrightarrow \mathcal{O}_X(-2)\longrightarrow \mathcal{N}\longrightarrow \mathcal{I}_{\mathcal{C}}\longrightarrow 0$$
	where $\mathcal{N}$ is a rank 2 ACM sheaf. Localizing at a point $x\in \C$ we get:
	$$0\longrightarrow \mathcal{O}_{X,x}\longrightarrow \mathcal{N}_x\longrightarrow \mathcal{I}_{\mathcal{C},x}\longrightarrow 0,$$
	corresponding to $1_x  \in \DM{\Ext}_{\mathcal{O}_{X,x}}^1(\mathcal{I}_{\mathcal{C},x}, \mathcal{O}_{X,x})$.
	Now, since $\C$ is smooth, we have: 
	$$\D{C}\simeq H^0(\mathcal{O}_{\C})\simeq  H^0( \sheafext^1(\mathcal{I}_{\mathcal{C}}, \mathcal{O}_X(-2))\simeq \DM{\Ext}^1(\mathcal{I}_{\mathcal{C}}, \mathcal{O}_X(-2)).$$
	Hence the class $1 \in H^0(\mathcal{O}_{\C})$ does not vanishes on any point of $\C$, so that for all points \mbox{$x \in \C$}, $1_x$ generates $\mathcal{O}_{\C,x}\simeq \sheafext_x^1(\mathcal{I}_{\mathcal{C}}, \mathcal{O}_X(-2))\simeq \DM{\Ext}_{\mathcal{O}_{X,x}}^1(\mathcal{I}_{\mathcal{C},x}, \mathcal{O}_{X,x}).$
	These facts, together with propositions \ref{pd_1} and \ref{Serre}, implies that $\mathcal{N}_x$ is free. Therefore $\mathcal{N}$ and $\E:=\mathcal{N}(2)$ are both vector bundles; by proposition \ref{nondeg_Ulrich} and remark \ref{Ulrich_lf} we can finally conclude that $\E$ is a skew-symmetric Ulrich bundle. 
\end{proof}

\section{Existence of AG normal elliptic quintics}
%\input{constructive_conic.tex}

%\subsection{Existence of AG elliptic quintics on normal threefolds} 
We describe now a constructive method to prove the existence of an AG normal elliptic quintic curve $\C \subset X$.
In order to apply this method we need the following pair of geometric objects:\smallskip
\begin{itemize}
	\item A rational quartic curve (not necessarily irreducible) $C_4 \subset X$.
	\item A cubic scroll $\pi:\Sigma\to \D{P}^1$ containing $C_4$ and such that the class of $C_4$ in $\DM{\Pic\,}(\Sigma)$ is $C_4 \sim D +3F$, \DM{where} $D$ and $F$ \DM{are}, respectively, the classes of the directrix and of a fiber $\pi^{-1}(x), \ x\in \D{P}^1$.
	
\end{itemize}
Recall that a cubic scroll $\Sigma\to \D{P}^1$ is a rational ruled surface whose Picard group is a free abelian group of rank 2, $\DM{\Pic\,}(S)= \D{Z}D\oplus \D{Z}F$. The directrix $D$ is a rational $(-1)$-curve %(namely $D^2=-1$) 
meeting transversely each fiber, so that the intersection number $D\cdot F$ is equal to one.
As two distinct fibers don't meet, $F$ has self intersection equal to zero. From these facts we can compute, by adjunction, that the canonical class is $K_{\Sigma}\sim -2D-3F$ and that the class of a hyperplane section is $H\sim D+2F$.
%nb. Mi calcolo la classe iperpiano perchè una sezione iperpiano è una curva di deg 3 in $\D{P}^3$, quindi una curva minimale allora una twisted cubic, ottimo.
%There are two possible classes of rational quartic curves on $\Sigma$. 
Now, if there exists a rational quartic curve $C_4$ contained in $\Sigma$, for a pair of non negative integers $a, \  b$ we can write $C_4 \sim aD+bF$. 
Since $C_4$ has genus 0 and degree 4 we must have:
$$C_4 \cdot H=4 \hspace{2mm} \text{and} \hspace{2mm} (C_4 +K_{\Sigma})\cdot C_4=-2.$$ Hence $a$ and $b$ must satisfy: 
$$ (aD+bF)\cdot (D+2F)=a+b=4, \hspace{3mm} ((a-2)D+(b-3)F)\cdot (aD+bF)=2b(a-1)-a(a+1)=-2.$$
The only two pairs of non negative integers satisfying these equations are $(2,2)$ and $(1,3)$.  
Suppose now that there exists $C_4\subset (X\cap\Sigma)$ a rational quartic such that in $\DM{\Pic\,}(\Sigma)$ we have $C_4 \sim D+3F$. 
The intersection $X\cap\Sigma$ \DM{gives} a divisor on $\Sigma$ whose class in $\DM{\Pic\,}(\Sigma)$ is $3 H= 3D+6F$; hence the curve $\C$, residual to $C_4$ in $X\cap H$ is an elliptic quintic such that $\C\sim 2D+3F\sim -K_{\Sigma}$. 
Such a curve $\C$ should necessarily be \DM{nondegenerate}, as
$$H^0( \Sigma, \mathcal{I}_{\C/\Sigma}(1))= H^0(\mathcal{O}_{\Sigma}(H- \C)) =H^0(\mathcal{O}_{\Sigma}(-D-F))=0;$$
since $\C \sim -K_{\Sigma}$, we can assert that moreover $\C$ is an AG curve due to the following:
%A cubic scroll is a smooth ACM surface

\begin{thm}\label{AG_anticanonical}[\cite{Miro_Roig}Thm 1.3.11] Let $\Sigma$ be an ACM surface in $\D{P}^4$ satisfying the condition $G_1$ (Gorenstein in codimension 1). Let $K_{\Sigma}$ be the canonical divisor and let $D$ be an effective divisor linearly equivalent to $mH-K_{\Sigma}$ for some $m\in \D{Z}$. Then $D$ is an AG curve whose canonical sheaf $\omega_D$ is isomorphic to $\mathcal{O}_D(m)$. 
\end{thm}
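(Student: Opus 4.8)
The plan is to check the two requirements in the definition of an AG curve — that $D$ is ACM in $\D{P}^4$, and that $\omega_D\simeq\mathcal{O}_D(m)$ — and then to pin down the twist from an explicit free resolution. The $G_1$ hypothesis is what gives the class $mH-K_\Sigma$ a meaning: since $\Sigma$ is ACM of codimension $2$, its canonical module $K_\Sigma=\Ext^2_R(R(\Sigma),R)(-5)$ is a maximal Cohen--Macaulay $R(\Sigma)$-module, and $G_1$ makes its sheafification $\omega_\Sigma$ a rank one reflexive sheaf, invertible off a finite set. An effective $D\sim mH-K_\Sigma$ is then the zero scheme of a regular section of $\mathcal{O}_\Sigma(m)\otimes\omega_\Sigma^{-1}$, so that $\mathcal{I}_{D/\Sigma}\simeq\mathcal{O}_\Sigma(-D)\simeq\omega_\Sigma(-m)$ as sheaves on $\Sigma$ (the twisted inclusion of reflexive sheaves being injective because the section is a non-zero-divisor on $\mathcal{O}_\Sigma$).

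First I would show $D$ is ACM. Writing $i\colon\Sigma\hookrightarrow\D{P}^4$, pushing forward the sequence above gives $0\to\mathcal{I}_{\Sigma/\D{P}^4}\to\mathcal{I}_{D/\D{P}^4}\to i_*(\omega_\Sigma(-m))\to 0$. Since $\Sigma$ is ACM one has $H^1_*(\mathcal{I}_{\Sigma/\D{P}^4})=H^2_*(\mathcal{I}_{\Sigma/\D{P}^4})=0$ (from the structure sequence of $\Sigma$ together with $H^1_*(\mathcal{O}_\Sigma)=0$), and Serre duality on the Cohen--Macaulay projective surface $\Sigma$ gives $H^1(\omega_\Sigma(k))\simeq H^1(\mathcal{O}_\Sigma(-k))^{\vee}=0$ for all $k$; the long exact cohomology sequence then forces $H^1_*(\mathcal{I}_{D/\D{P}^4})=0$. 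That $D$ is locally Cohen--Macaulay follows from the depth lemma applied at each $x\in D$ to $0\to(\omega_\Sigma(-m))_x\to\mathcal{O}_{\Sigma,x}\to\mathcal{O}_{D,x}\to 0$: since the canonical module $\omega_{\Sigma,x}$ is maximal Cohen--Macaulay, $\depth\mathcal{O}_{D,x}\geq\dim\mathcal{O}_{\Sigma,x}-1=\dim\mathcal{O}_{D,x}$. By Remark \ref{projnormcurve}, $D$ is ACM.

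Next, with $R(D)$ now Cohen--Macaulay, the depth lemma on $0\to I_{D/\Sigma}\to R(\Sigma)\to R(D)\to 0$ shows $I_{D/\Sigma}$ is maximal Cohen--Macaulay over $R(\Sigma)$; having the same sheafification and the same depth as $K_\Sigma(-m)$, it is isomorphic to it. Dualizing the Hilbert--Burch resolution $0\to G_2\to G_1\to G_0=R\to R(\Sigma)\to 0$ gives a length $2$ free resolution of $K_\Sigma$ whose last term is $G_0^{\vee}(-5)=R(-5)$; twisting by $-m$ and forming the mapping cone with the resolution of $R(\Sigma)$ yields a length $3$ free resolution of $R(D)$ with last term $R(-m-5)$, of rank $1$. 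Minimalizing can only replace this term by a summand of it, and since the projective dimension of $R(D)$ equals its codimension $3$ the last term stays $R(-m-5)$; hence $R(D)$ is AG by the characterisation of AG subschemes via the shape of the minimal free resolution. Dualizing that resolution, which is self-dual up to the twist by $-m-5$, identifies $\Ext^3_R(R(D),R)\simeq R(D)(m+5)$, so $K_D=\Ext^3_R(R(D),R)(-5)\simeq R(D)(m)$, i.e. $\omega_D\simeq\mathcal{O}_D(m)$. (Alternatively, once $D$ is ACM one may obtain the twist directly by applying $\sheafhom(-,\omega_\Sigma)$ to $0\to\omega_\Sigma(-m)\to\mathcal{O}_\Sigma\to\mathcal{O}_D\to 0$ and using $\sheafhom(\omega_\Sigma,\omega_\Sigma)=\mathcal{O}_\Sigma$, valid since $\Sigma$ is $S_2$ and $G_1$, which gives $\omega_D=\sheafext^1(\mathcal{O}_D,\omega_\Sigma)\simeq\mathcal{O}_D(m)$.)

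The most delicate part should be the bookkeeping around the $G_1$-but-not-Gorenstein locus of $\Sigma$: making the identification $\mathcal{I}_{D/\Sigma}\simeq\omega_\Sigma(-m)$ rigorous globally rather than only in codimension $1$, checking that $D$ remains locally Cohen--Macaulay and that the duality identifications are valid at the finitely many non-Gorenstein points, and — more mundanely — keeping every $-5$ and $-m$ twist straight so that the final answer is exactly $\mathcal{O}_D(m)$ and not a neighbouring twist.
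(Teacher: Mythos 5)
The paper does not actually prove this statement---it is quoted directly from \cite{Miro_Roig}, Thm 1.3.11---and your argument is essentially the standard proof given there and in the Gorenstein-liaison literature: identify $\mathcal{I}_{D/\Sigma}$ with $\omega_\Sigma(-m)$, verify ACM-ness cohomologically using Serre duality and the ACM-ness of $\Sigma$, and read off both the AG property and the twist of $\omega_D$ from the mapping cone of the dualized Hilbert--Burch resolution. Your steps are correct; the only informality is the phrase ``regular section of $\mathcal{O}_\Sigma(m)\otimes\omega_\Sigma^{-1}$'', which strictly requires the generalized-divisor formalism when $\omega_\Sigma$ is not invertible, but you state the correct conclusion $\mathcal{I}_{D/\Sigma}\simeq\omega_\Sigma(-m)$ and explicitly flag this as the delicate point---and in the paper's actual application $\Sigma$ is a smooth cubic scroll, so the non-Gorenstein locus is empty and the issue is vacuous.
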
 

The condition $G_1$ consists in requiring that each local ring $\mathcal{O}_{\Sigma,x}$ of dimension \DM{less than} or equal to 1 is Gorenstein.

We see that a cubic scroll $\Sigma$ satisfies the hypotheses of the theorem. Indeed it is an ACM surface ( see \cite{Miro_Roig}, proof of thm 1.3.15 for a classification of smooth rational ACM surfaces); 
as moreover $\Sigma$ is smooth, every local ring is regular hence Gorenstein. Accordingly the curve $\C$ obtained as the residual to $C_4$ in $X\cap \Sigma$ is an
AG elliptic quintic curve %belonging to the class $2D+3F=-K_{\Sigma}$; hence an AG curve 
such that $\mathcal{O}_{\C}\simeq\omega_{\C}$. 

In the next section we will prove the existence of  $C_4$ and $\Sigma$ as above on a normal cubic threefold $X$ singular along a line or a conic of the second type or along a (possibly reducible) rational quartic (however the same approach can be applied to other normal threefolds with no triple points). 
This will lead us to conclude the prove of:
\begin{thm*}[Theorem \ref{normal_quintic}] Let $X$ be a normal cubic threefold that is not a cone. Then $X$ contains a non-degenerate AG quintic elliptic curve $\C.$
\end{thm*}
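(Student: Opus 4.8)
The plan is to combine the two methods of producing AG normal elliptic quintics developed in Sections~4 and~5, dividing normal cubic threefolds (that are not cones) according to the stratification of $\Sing(X)$ supplied by Segre's classification, as recalled in Section~3. First, if $X$ has isolated singularities, or if $\Sing(X)$ contains a line of first type, a conic of first type (including the degeneration into two incident lines), or three non-coplanar lines through a point, then Theorem~\ref{smooth_quintic} already produces a smooth nondegenerate elliptic quintic $\C\subset X$; since a smooth nondegenerate elliptic quintic is projectively normal, hence ACM, and subcanonical (as $\omega_\C\simeq\mathcal O_\C$), it is AG, so these cases are done. The remaining possibilities for a one-dimensional $\Sing(X)$ are: a line of second type, a conic of second type (including the degeneration into two incident lines of second type), and a rational quartic of first type (including the degeneration into two conics meeting at a point but not coplanar). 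For these I would invoke the constructive method of Section~5.

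The second step is therefore to produce, for each of these remaining threefolds, the two geometric ingredients required by the Section~5 construction: a rational quartic curve $C_4\subset X$ (not necessarily irreducible), and a smooth cubic scroll $\Sigma\subset\mathbb P^4$ with $C_4\subset\Sigma$ and $[C_4]=D+3F$ in $\Pic(\Sigma)$. Once both are in hand, the residual curve $\C$ to $C_4$ in $\Sigma\cap X$ lies in the class $2D+3F\sim-K_\Sigma$; it is nondegenerate because $H^0(\mathcal I_{\C/\Sigma}(1))=H^0(\mathcal O_\Sigma(-D-F))=0$, and it is an AG elliptic quintic by Theorem~\ref{AG_anticanonical} applied with $m=0$ (using that a smooth cubic scroll is ACM and, being smooth, satisfies $G_1$). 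Thus the whole theorem reduces to the existence of the pair $(C_4,\Sigma)$ on each of the remaining Segre types. I would handle this uniformly: starting from the normal forms for $F$ obtained in Section~3 (e.g.\ $F=X_4q_4(X_0,X_1)+c'$ for a line of second type, $F=c(X_3,X_4)+X_3\sum\alpha_iX_4X_i+X_3q(X_0,X_1,X_2)$ for a conic of second type, and $F=\det N$ for the secant cubic), one locates an explicit rational quartic $C_4\subset X$ — typically built from lines and conics already visible in the geometry of the singular locus and its tangent cones — and then exhibits $\Sigma$ as the scroll swept out by a suitable pencil of lines (or conics) meeting $C_4$, checking that $\Sigma\subset\mathbb P^4$ has degree $3$, is smooth, and contains $C_4$ with the right divisor class.

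The main obstacle I expect is precisely the construction of $\Sigma$ with $[C_4]=D+3F$ rather than $[C_4]=2D+2F$: both classes are numerically forced on a degree-$4$ rational curve on a cubic scroll, so one must arrange the scroll so that $C_4$ meets the directrix in the correct number of points (the class $D+3F$ means $C_4\cdot D=1$, i.e.\ $C_4$ meets the directrix transversally in a single point, whereas $2D+2F$ would meet it twice). Concretely this amounts to choosing the ruling of $\Sigma$ so that each line of the ruling meets $C_4$ in exactly one point and the base locus/directrix is disjoint from $C_4$ except at one point; verifying this requires some care with the explicit equations in each Segre case, and one must also confirm that $\Sigma$ is genuinely contained in $X$-independent data (a cubic scroll of $\mathbb P^4$, not degenerating to a cone or a singular surface where Theorem~\ref{AG_anticanonical} would fail). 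A secondary but routine point is checking that $\Sigma\cap X$ is a proper intersection, so that the residual curve $\C$ is indeed a pure one-dimensional scheme of the expected degree $5$; this follows once $\Sigma\not\subset X$ and $C_4$ has the stated class, by computing $[\Sigma\cap X]=3H=3D+6F$ and subtracting $[C_4]$.
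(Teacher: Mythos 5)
Your plan is essentially the paper's proof: the same case division via Segre's classification, with Theorem \ref{smooth_quintic} disposing of the isolated-singularity and first-type cases, and the constructive method (a rational quartic $C_4$ on a cubic scroll $\Sigma$ with $[C_4]=D+3F$, the residual curve $\C\sim 2D+3F\sim -K_\Sigma$, nondegeneracy from $H^0(\mathcal{O}_\Sigma(-D-F))=0$, and Theorem \ref{AG_anticanonical}) for lines and conics of second type and the secant cubic; the paper carries out your "main obstacle" step by taking $C_4=C_2\cup l\cup l'$ reducible, built from disjoint lines and bitangent planes on a general hyperplane section, and realizing $\Sigma$ as the cross-ratio join of the line $D=\overline{y_1y_2}$ (with $y_1\in l$, $y_2\in l'$) with the conic $C_2$. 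One arithmetic correction to your side remark: since $D^2=-1$ on a cubic scroll, the class $D+3F$ forces $C_4\cdot D=2$ (the two points $y_1,y_2$), not $1$, whereas $2D+2F$ would give $C_4\cdot D=0$, and it is exactly by making the directrix meet each of $l$ and $l'$ once that the paper pins down the class $D+3F$.
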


We will then show that also this time, the Ulrich sheaves constructed from these curves by mean of Serre's correspondence are locally free (so that they are ``naturally'' skew-symmetric).
%\begin{prop}\label{prop_residual}Let $X$ be a normal cubic threefold that is not a cone. Then there exists a rank 2 skew symmetric Ulrich bundle $\E \in \DM{\Coh\,}(X)$.
%\end{prop}

\subsubsection*{Illustration of the method in the smooth case}
This time again we first describe how the method works in the smooth case and then we will show how we can adapt it to singular threefolds, where we will need to consider also reducible quartic curves.
Suppose that $X$ is a smooth cubic threefold.
It was proved in \cite{Dima-Tikho} that on $X$ there always exists a smooth rational quartic curve $C_4 \subset X$.

Starting from \DM{a smooth $C_4$,} we can construct a cubic scroll $\Sigma$ as follows:
we take $D$, a chord of $C_4$, $D=\overline{x_1x_2}, \ x_i\in C_4$. 
We fix two points $x_3, \ x_3'$ with $x_3 \in C_4$, $x_3' \in D$.
We consider then all the lines of the form $\overline{zz'}$ where $z \in C_4, \ z' \in D$ \DM{is} a pair of points satisfying the cross-ratio equation:
$$[x_1,x_2,x_3,z]=[x_1,x_2,x_3',z'].$$

The lines $\overline{zz'}$ obtained in this way draw a cubic scroll $\Sigma\subset \D{P}^4$.
The directrix of $\Sigma$ is exactly the line $D$; the curve $C_4$ intersect $D$ in two points, namely $x_1, \ x_2$ hence $\Gamma \sim D+3F$ in $\DM{\Pic\,}(\Sigma)$.

\subsubsection*{The singular case: construction of the rational quartic $C_4$}
We will now adapt the method that we have just described to the case where $X$ is a normal threefold that is singular either along a line or a conic of the second type, or along a rational quartic. This is done starting from a reducible and non-degenerate rational quartic $C_4$ on $X$ of the form $C_4=C_2\cup l\cup l'$ for a smooth conic $C_2\subset X$ and a pair of skew lines $l, \ l'$ meeting $C_2$ and not lying in $\langle C_2 \rangle$. %il fatto che sia razionale dovrebbe dirmi che spanna tutto P4. INVECE NO!!!

The first step to obtain $C_4$ is the construction of $C_2\cup l$, a rational cubic lying on the surface
$X\cap \langle C_2 \cup l\rangle$.
In the next paragraph we will show in detail how to obtain rational cubics of such a kind on a general hyperplane section of $X$, a cubic surface that (by prop. \ref{sing-hs}) presents singularities of types $A_2, \ 2 A_2$ or $4 A_1$.
\begin{comment}
\paragraph{Palla delle coniche}
\textbf{Ecco cosa devo fare:}
\begin{itemize}
\item Dire che in generale posso ottenere la cubica partendo dalla coppia di skew lines e che la mult è data dall amult dell'unione
\item Parlare di bitangent e tritangent planes 
\item dire che se la conica è dalla regolare allora non passa per punti singolari
\item dire il viceversa.
\end{itemize}
\end{comment}
\
In more general terms given a cubic surface $S$, whenever there exists a pair of dijoint lines $l_a, \ l_b$ on $S$, it is possible to obtain a %non-degenerate non serve mettere non degenerate perchè non posso avere una cubica piana razionale
rational cubic $C_3=C_2\cup l_b$ for a smooth conic $C_2$ such that $l_a\subset \langle C_2 \rangle$ and $\mult_p(C_2)=\mult_p(l_a), \ \forall \ p\in \Sing (S)$.

Consider indeed a line $l_a\subset S$ and denote by $\mathcal{H}_{l_a}:=\D{P}(H^0(\mathcal{I}_{l_a/\D{P}^3}(1)))\simeq \D{P}^1$, the pencil of planes containing $l_a$.
For every $H\in \mathcal{H}_{l_a}$, the intersection $H\cap S$ is a plane cubic consisting of the union of $l_a$ and a conic $C_{2,H}$.
The Gauss map:
\begin{align*}
\gamma: S \dashrightarrow & S^{*}\subset {\D{P}^{3^*}} \\
x \mapsto & \D{T}_x S
\end{align*} 
mapping a (general) point $x \in S$ to the projective tangent space $\D{T}_xS$ to $S$ at $x$, induces a degree 2 rational map: 
$$\gamma\restriction_{l_a}:l_a\dashrightarrow \mathcal{H}_{l_a}$$
determined by a linear system $\mathcal{Q}_{l_a}\subset |\mathcal{O}_{l_a}(2)|$ of quadrics on $l_a$. For a general \mbox{$H\in \im(\gamma\restriction_{l_a})$}, $\gamma\restriction_{l_a}^{-1}(H)$ is the pair of points defined by $C_{2,H}\cap l_a$. 
%(this points coincide whenever $L$ is tangent to $C_{2,H})$. 
% $\gamma\restriction_{L_1}$ is determined by a linear system $\mathcal{Q}_{L_1}\subset |\mathcal{O}_{L_1}(2)|$ of quadrics on $L_1$. 
%esatto sono quei due perchè in quei punti ho che lo spazio tangente alla curva è un piano, perchè sono punti singolari.
%Invece la storia del piano generale del pencyl è che se ne becco uno in cui la conica è tangente alla retta, allora i punti coincidono. Otiimo, questo si spiega con il pencil di quadriche. Questo succede quando il pencil delle quadriche che mi definisce la Gauss map interseca la conica delle quadriche singolari!! ientifico quadrica con la coppia di punti sulla retta. 
Whenever \mbox{$l_a\cap \Sing(S)=\emptyset$} (in what follows we will refer to a line disjoint from the singular locus as a \textit{regular line}), $\mathcal{Q}_{l_a}$ is a base point free pencil %(with no base points) 
that can be generated by 2 distinct quadrics of rank one (defined by the intersection of $\mathcal{Q}_{l_a}\simeq \D{P}^1$ with the conic parameterizing rank 1 quadrics on $l_a$). %(a conic in $|\mathcal{O}_L(2)|\simeq \D{P}^2$) in two points so that $\mathcal{Q}_L$ can be spanned by two quadrics of rank 1
We thus have 2 planes $H', \ H''$ such that $l_1$ is tangent to $C_{2,H'}$, $C_{2,H''}$ respectively, whilst for $H$ general, $\gamma\restriction_{l_a}^{-1}(H)$ consists of 2 distinct points.

If $l_a$ passes trough just one singular point $p_0$, all quadrics in $\mathcal{Q}_{l_a}$ passes through $p_0$ (namely $\mathcal{Q}_{l_a}$ is tangent to the conic parameterizing rank 1 quadrics on $L_1$), %esatto, perchè allora ho un punto che è comune a tutte le quadriche, e l'unico modo che possa succedere è che la coppia di generatrici di rango uno coincida...
hence $\forall H \in \mathcal{H}_{l_a}$, $C_{2,H}$ passes through $p_0$.%esatto perchè i due punti nella preimmagine sono dati dall'intersezione della retta con la conica
%iL PUNTO DI TANGENZA è UNA QUADRICA DI RANK 1 CHE MI DA IL PTO SINGOLARE...unia possibilità affinche 2 quadriche di rank 1 abbiano un punto in comune è che coincidano. 

Finally if $l_a$ is generated by two singular points $p_0, \ p_1$, there exists a plane $H$ tangent to $S$ all along $l_a$, that is $\dim(\mathcal{Q}_{l_a})=0$ and $\im(\gamma\restriction_{l_a})$ consists just of $H$. 

%\textbf{Adesso metterei la storia dei piani tangenti e tritangenti e dei triangoli}

Consider now a plane $H\in \mathcal{H}_{l_a}$; the singular points of the plane cubic $H\cap S=C_{2,H}\cup l_a$ are located by:
$$\Sing(S\cap H)= (C_{2,H}\cap l_a)\cup (\Sing(C_{2,H})\setminus l_a).$$

This means that a plane $H\in \mathcal{H}_{l_a}$ is a so called \textit{bitangent plane to $S$}, that is there exists only two points $x_1, \ x_2 \in S$ such that $H\subset \D{T}_{x_i}S$, if and only if $C_{2,H}$ is smooth.

The existence of at least 3 points $x_i \in S, \ i=1,2,3$ such that $H\subset \D{T}_{x_i}S$ (so that $H$ is at least a \textit{tritangent plane} to $S$), is instead equivalent to the fact that $C_{2,H}$ degenerates to the union of two lines. (More precisely $H$ is tritangent if and only if $C_{2,H}$ is the union of two distinct lines both different from $l_a$ whilst $H$ is tangent to $S$ along the entire $l_a$ if and only if $l_a \subset C_{2,H}$).
For a plane $H\in \mathcal{H}_{l_a}$ that is at least tritangent, $H\cap S$ is a so called \textit{triangle}, namely a plane cubic given by the union of three lines.
%consists of the union of $L$ with two other lines (such a configuartion of lines on a cubic surface is usually referred to as a \textit{triangle});
Since a line on a cubic surface is contained in a finite number of triangles (at most 5), we get that for a general $H\in \mathcal{H}_{l_a}$, $C_{2,H}$ is smooth. % conic such that $\Sing(S)\cap C_{2,H}\subset C_{2,H}\cap L_1$. 
%From the previous discussion we also deduce that for such an $H$ we have: 
Summing up, for such a $H$ we have:
$$ \D{T}_{x}(S\cap H)=H \iff x\in l_a\cap C_{2,H},$$
so that:
$$
% C_{2,H}\cap \Sing(S)\subset l_a\cap C_{2,H},& \hspace{5mm} \ l_a\cap \Sing(S)\subset l_a\cap C_{2,H}, \\ 
\Sing(S)\cap H \subset l_a  \cap C_{2,H},\hspace{5mm} \mult_p(C_{2,H})=\mult_p(l_a), \ \forall p\in \Sing(S).
$$
%l'ultima viene perchè sto considerando un piano bitangente quindi le coniche sono lisce, perciò non posso avere mult 2.
\begin{comment}
From this discussion we learn that 
From this discussion we learn that if ever $L_1$ passes through a singular point $p_0$, $\forall H \in \mathcal{H}_{L_1}, \ p_0\in C_{2,H}\cap L_1$.

We can also oberve that whenever $H\in \mathcal{H}_L$ is such that $C_{2,H}$ is smooth, $C_{2,H}\ setminus L$ is then entirely contained in $S_{reg}$, the regular locus of $S$.
Indeed if there exists a point $p\in C_{2,H}\cap \Sing(S), \ p\ne L$, we would have 3 points $x \in H\cap S$ ($p$ and the two points in $C_{2,H}\cap L$) 
such that $H=\D{T}_x(H\cap S)$; $H$ is thus a so called \textit{tritangent plane} to $S$. 
But if $C_{2,H}$ is smooth, the only singular points of $C_{2,H}\cup L$ are given by $C_{2,H}\cap L$ so that $H$ must be a \textit{bitangent} plane.

For a tritangent plane $H\in \mathcal{H}_L$, $H\cap S$ consists of the union of $L$ with two other lines (such a configuartion of lines on a cubic surface is usually referred to as a \textit{triangle}); since a line on a cubic surface is contained in a finite number of triangles (at most 5), we get that for a general $H\in \mathcal{H}_L$, $C_{2,H}$ is a smooth conic such that $\Sing(S)\cap C_{2,H}\subset C_{2,H}\cap L$.
\end{comment}

(Note that conversely whenever we start from a smooth conic $C_2\subset S$, the intersection $\langle C_2 \rangle \cap S$ is a plane cubic of the form
$C_2 \cup l$, for a  line $l \subset S$; the smoothness of $C_2$ ensures that $\langle C_2 \rangle$ is a bitangent plane so that $H\cap \Sing(S)\subset C_2 \cap l$).

Now, if we consider a line $l_b\subset S$ disjoint from $l_a$, every $H\in \mathcal{H}_{l_a}$ will meet $l_b$ in exactly one point lying on $C_{2,H}$.
From what we have just illustrated we can conclude that for a general point $x \in l_b$, the plane $H:=\langle x,l_a\rangle $ is such that $C_2:=C_{2,H}$ is a smooth conic and $C_3:=C_{2}\cup l_b$ is a %non-degenerate 
rational cubic such that $\mult_p(C_{2,H}\cup\: l_b)=\mult_p(l_a\cup\: l_b), \ \forall p\in \Sing(S)$.
%Since there exists a finite number (at most 5) of planes $H\in \mathcal{H}_L$ such that the $C_{2,H}$ degenerates in the union of two lines (plane cubics of such a kind are usually referred to as "traingles"), for $H\in \mathcal{H}_L$ general, $C_{2,H}$ is smooth conic and the union $L'\cup C_{2,H}$ is a (reducible) rational cubic. 
Once we have obtained $C_3$, the construction of $C_4$ is almost immediate since it is easy to show that for a general point in $C_2$ there exists a line passing through it and not constained in $\langle C_3\rangle$.

%\textbf{DIRE CHE SE LA CONICA PASSA PER UN PUNTO SINGOLARE, ALLORA DEVE STARE NELL'INTERSEZIONE, OVVERO DESCIRVERE PRIMA PER CHE PUNTI SINGOLARI PUò PASSARE LA CONICA}    

\paragraph{\textbf{$\bullet$ $S=X\cap H$ has $4 A_1$ singularities}}

\

Let's suppose that $X$ is singular along a rational quartic (that can possibly degenerate to a  couple of conics) so that for a general hyperplane $H\subset \D{P}^4$, $S:=X\cap H$ is a cubic surface with 4 singular points $p_0, \ldots, p_3$ of type $A_1$. %We can construct the rational cubic $C'$ on $S$ as a union of three (non-coplanar) cuncurrent lines $L_1,\ L_2, \ L_3$ such that $L_1\cap L_2$ and $L_2 \cap L_3$ are smooth pints of $S$. 
The surface $S$ contains a total of 9 lines:
\begin{itemize}
	\item we have the six lines $\overline{p_ip_j}, \ i,j \in \{0,\ldots 3\}, \ i<j,$ joining two singular points;
	\item there exist then 3 regular lines $l_{0123}, \ l_{0213}, \ l_{0312}$ constructed as follows. For every line $\overline{p_ip_j}$, there exists a plane $\Delta_{ij}$ tangent to $S$ all along $\overline{p_ip_j}$ and intersecting $S$ along a plane cubic union of $\overline{p_ip_j}$ (counted with multiplicity 2) with a regular line,
	this latter located by all the intersections of the form $\Delta_{ij}\cap \Delta_{kl}$, for every line $\overline{p_kp_l}$ disjoint from $\overline{p_ip_j}$.
	%. $\Delta_{ij}$ intersects $S$ along a plane cubic union of $\overline{p_ip_j}$ (counted with multiplicity 2) with a regular line and this latter is located by all the intersections of the form $\Delta_{ij}\cap \Delta_{kl}$, for a line $\overline{p_kp_l}$ disjoint from $\overline{p_ip_j}$.
	%; $\Delta_{ij}$ intersects $S$ along a plane cubic union of $\overline{p_ip_j}$ (counted with multiplicity 2) with a line $l$ intersecting $\overline{p_ip_j}$ in a smooth point of $S$. Every line $\overline{p_kp_l}$ disjoint from $\overline{p_ip_j}$ will therefore meet $\Delta_{ij}$ in a point belonging to $l$ and the line $l$ will coincide with $\Delta_{ij}\cap \Delta_{kl}$. 
	We have three pairs of disjoint lines spanned by singular points, the pairs of the form $\overline{p_0p_i}, \ \overline{p_jp_k}, \ i\in \{1,2,3\}, \ j<k, \ \{0,i\}\cap \{j,k\}=\emptyset$. Each intersection $\Delta_{0i}\cap \Delta_{jk}$ defines one among the three aforementioned lines $l_{0ijk}$.
	%Take now a line $\overline{p_1,p_i}$, there exists a unique line $\overline{p_kp_l}, \ k<l$ spanned by two singular points $p_k,\ p_l$ that is disjoint from $\overline{p_1p_i}$. This line will meet the plane $\Delta{1i}$ on $l$. Notice that the line $l$ coincide thus with $\Delta_{1i}\cap \Delta_{jk}$, for this reason, we denote $l$ by $l_{ijkl}$. 
	%Applying this construction for each of the three pairs of disjoint lines $\overline{p_1p_i}\overline{p_jp_k}, \ j<k$ we get the three aforementioned lines $l_{1234}, \ l_{1324}, \ l_{1423}$. %\textbf{la regola è quindi $i<j, \ k<l, \{ij\}\cap \{kl\}=\emptyset$.} 
	It easy to notice that furthermore these three lines lie on a same plane $\Delta_r$. 
\end{itemize}

According to the characterization we have just presented, we can see that a line $\overline{p_ip_j}$ intersects $\overline{p_kp_l}$ if and only if $\{i,j\}\cap \{k,l\}\ne\emptyset$ and that it meets only one among the regular lines, the residual to $2\: \overline{p_ip_j}$ in $\Delta_{ij}\cap S$.
$\overline{p_ip_j}$ belongs to three triangles:
\begin{itemize}
	\item $\overline{p_ip_j}\cup \overline{p_ip_k}\cup \overline{p_jp_k}, \ k\ne i,j$;
	\item $\Delta_{ij}\cap S.$%\overline{p_ip_j}\cup \overline{p_kp_l}\cup (\Delta_{ij}\cap \Delta_{kl}),\ \{i,j\}\cap \{kl\}=\emptyset$.
\end{itemize}

Similarly we can observe that a regular line $l_{0ijk}$ intersect 4 lines: the other two regular lines (since regular lines are coplanar) and the lines $\overline{p_0p_i}, \ \overline{p_jp_k}$. Each regular line $l_{0ijk}$ belongs to three triangles as well:
\begin{itemize}
	\item $\Delta_r\cap S= l_{0123}\cup l_{0213}\cup l_{0312}$;
	\item $\Delta_{0i}\cap S=2 \overline{p_0p_i}\cup l_{0ijk}$ and $\Delta_{jk}\cap S= 2\overline{p_jp_k}\cup l_{0ijk}$.
\end{itemize}
%	\textbf{Descrivere i triangoli e dare un nome al piano spannato dalle regolari}
\

In order to apply our construction of $C_3$ we need a pair of disjoint lines: these are either of the form $\overline{p_0p_i},\ \overline{p_jp_k}$ with $ \{0,i\}\cap\{j,k\}=\emptyset$, or of the form $l_{0ijk}, \:\overline{p_lp_m}$, for $\#(\{l,m\}\cap \{j,k\})=1$. 
Starting from a pair of disjoint lines spanned by singular points, take % we will get a rational cubic $C_3$ such that $\mult_{p_i}C_3=1, \ \forall i \in\{0,\ldots 3\}$.
for simplictiy the lines $\overline{p_0p_1}$ and $\overline{p_2p_3}$, we have that for $x\in \overline{p_2p_3}, \ x\ne p_2,p_3$, \mbox{$ x\notin l_{0123}$}, the plane $H:=\langle x,\overline{p_0p_1}\rangle$ is bitangent to $S$ so that $C_{2,H}$ is a smooth conic meeting $\overline{p_0p_1}$ in $p_0, \ p_1$. The cubic $C_3=C_{2,H}\cup \overline{p_2p_3}$ satisfies $\mult_{p_i}(C_3)=1, \ \forall\: i=0,\ldots 3$.

Take now a regular line and another line disjoint from it. Up to permuting the singular points we may assume that these are $l_{0123}$ and $\overline{p_0p_2}$.
For $x\in \overline{p_0p_2}, \ x\ne p_0,p_2$, \mbox{$ x\notin (\Delta_r\cap \overline{p_0p_2})$}, the plane $H:=\langle x,l_{0123}\rangle$ is bitangent to $S$ and $C_{2,H}$ is a smooth conic contained in the smooth locus of $S$. $C_3:=C_{2,H}\cup \overline{p_0p_2}$ is a non-degenerate rational cubic passing, with multiplicity one, just trough 2 of the singular points, that are $p_0$ and $p_2$.
Considering a general point $x \in l_{0123}$, namely $x$ must differs from $l_{0123}\cap\overline{p_0p_1}, \ l_{0123}\cap \overline{p_2p_3}$ and  $l_{0123}\cap l_{0213}$,
we get a bitangent plane $H:=\langle x, \overline{p_0p_2}\rangle$, %$H:=\langle x,\overline{p_2p_3}\rangle$, 
a smooth conic $C_{2,H}$ passing throug $p_0,\ p_2$ and consequently the rational cubic $C_{2,H}\cup l_{0123}$ that, again, meets $\Sing(S)$ in $p_0,\ p_2$.

%\textbf{Dire che la cubica passa in un numero di punti singolari pari alla coppia di rette disgiunte}.

\paragraph{$\bullet$	$S:=X\cap H$ has an $A_2$ singularity}

\

Let $X$ be a cubic threefold singular along a line of the second type, a general hyperplane setion $S=X\cap H$ is a cubic surface with a singular point $p_0$ of type $A_2$. 
%Also in this case we can obtain the cubic $C'$ as the union of three cuncurrent lines meeting along smooth points of $S$. 
There exist 15 lines on $S$, again we recall  briefly their geometry.
We have 6 lines $l_1, \ldots l_6$ passing through the singular point $p_0$ and consisting of two 3-uples $l_1, \ l_2, l_3$ and $l_4,\ l_5, l_6$ of coplanar lines. We call $\Delta_{123}, \ \Delta_{456}$ the planes spanned by these 3-uples.
Every plane $\langle l_i, l_j\rangle, \ i\in \{1,2,3\}, \ j\in\{4,5,6\}$ intersect $S$ along a plane cubic union of $l_i, \ l_j$ and of a third line $l_{ij}$ entirely contained in the smooth locus of $S$. Applying this construction to every couple $ l_i,\  l_j, \ i\in \{1,2,3\}, \ j\in\{4,5,6\}$ we get 9 regular lines $l_{ij}$.
It is easy to see that a line $l_i$ meets a regular line $l_{jk}$ if and only if $i \in \{j,k\}$ and that two regular lines $l_{ij}, \ l_{kl}$ intersects if and only if $\{ij\}\cap\{kl\}=\emptyset$. 
A line $l_i\subset \Delta_{123}$ (resp. $l_i\subset \Delta_{456}$) belongs to 4 triangles: 
\begin{itemize} 
	\item $\Delta_{123}\cap S=l_1\cup l_2\cup l_3$ (resp. $\Delta_{456}\cap S=l_4\cup l_5 \cup l_6$);
	\item $l_i\cup l_j \cup l_{ij}$ for $l_j \subset \Delta_{456}$ (resp. $l_i\cup l_j \cup l_{ij}$ for $l_j \subset \Delta_{123}$).
\end{itemize}
A regular line $l_{ij}$ belongs to 3 triangles:
\begin{itemize}
	\item $l_{ij}\cup l_i\cup l_j$;
	\item $l_{ij}\cup l_{kl} 	\cup l_{mn}, \ k,m\in \{1,2,3\}, \ k,m\ne i, \ l,n\in\{456\}, \ l,n\ne j$.
\end{itemize}
A pair of disjoint lines on $S$ is either of the form $l_i, \ l_{jk}, \ i\not \in \{j,k\}$ or of the form $l_{ij}, \ l_{jk}$. %, \ \{i,j\}\cap \{k,l\}\ne \emptyset$.
Let's start from a pair of the first kind, $l_1, \ l_{24}$, let's say. For $x\in l_{24}, \ x\ne l_{24}\cap \ l_2$, \mbox{$ x\ne l_{24}\cap l_4$},  $x\ne l_{24}\cap l_{1j}, \ j=5,6$, we get a bitangent plane $H:=\langle x, l_1\rangle$ and a smooth conic $C_{2,H}$ passing through $p_0$. 
Similarly we get a bitangent plane $H:=\langle x, l_{24}\rangle$ for $x\in l_1$, $x\ne p_0, \ x\ne l_1\cap l_{1j}, \ j\in\{5,6\}$, and consequently a smooth conic $C_{2,H}$ contained in the smooth locus of $S$ (notice that in both cases we obtain a cubic $C_3$ that passes through $p_0$).
Considering a pair of regular disjoint lines, take for example $l_{14}, \ l_{15}$, we get non degenerate cubic $C_3=C_{2,H}\cup l_{15}$ (resp. $C_{2,H}\cup l_{14}$), for a smooth conic $C_{2,H}$ obtained from the bitangent plane $H:=\langle x, l_{14}\rangle$, (resp. $H:=\langle x, l_{15}\rangle$), with $x\in l_{15}$, \mbox{$ x\ne l_{15}\cap l_{1}$},  $x\ne l_{15}\cap l_{i6}, \ i\in\{2,3\}$ (resp. with $x\in l_{14}, \ x\ne l_{14}\cap l_{1}, \ x\ne l_{14}\cap l_{i6}, \ i\in\{2,3\}$.)
In this case $C_3$ does not pass through $p_0$.

\paragraph{$\bullet$ $S=X\cap H$ has $2 A_2$ singularities}

\

We finally pass to the case where $X$ is singular along a conic of the second type so that a general hyperplane section $S=X\cap H$ is a cubic surface presenting two singular points $p_0, \ q_0$ that are $A_2$ singularities.
On such a cubic surface we have 7 lines. We have indeed 4 lines passing through $p_0$: the line $\overline{p_0q_0}$ and three other coplanar lines $\overline{p_0u_i}, \ i=1,2,3$ for three smooth alligned points $u_1, \ u_2, u_3$ on $S$. 
The remaining three lines are the residuals to $\overline{p_0q_0}\cup\overline{p_0u_i}$ in $\langle p_0, q_0, u_i\rangle\cap S$; these are 3 lines $\overline{q_0v_i}, \ i=1,2,3$ passing through $q_0$ and not through $p_0$.
From this description we see that $\overline{p_0q_0}$ meets every other line, each line $\overline{p_0u_i}$ (resp. $\overline{q_0v_i}$) meets $\overline{p_0u_j}, \ i\ne j$ at $p_0$ (resp. $ \overline{q_0v_j}, \ i\ne j$ at $q_0$) and the line $\overline{q_0v_i}$ (resp. the line $\overline{p_0u_i}$).
The line $\overline{p_0q_0}$ belongs to the three triangles $\overline{p_0q_0}\cup \overline {p_0u_i}\cup \overline{q_0v_i}, \ i\in\{1,2,3\}$.
A line $\overline{p_0u_i}$ through $p_0$ (resp. $\overline{q_0v_i}$ through $q_0$) fits into two triangles:
\begin{itemize}
	\item $\overline{p_0u_1}\cup\overline{p_0u_2}\cup\overline{p_0u_3}$ (resp. $\overline{q_0v_1}\cup\overline{q_0v_2}\cup\overline{q_0v_3}$);
	\item $\overline{p_0u_i}\cup \overline{p_0q_0}\cup \overline{q_0v_i}.$
\end{itemize}
The only pairs of disjoint lines are of the form $\overline{p_0u_i}, \ \overline{q_0v_j}, \ i\ne j$. Taking then a point $x \in \overline{q_0v_j}$ such that $x\ne q_0, \ x\not\in \overline{p_0u_j}$, we get a bitangent plane $H:=\langle x, \overline{p_0u_i}\rangle$ and consequently a smooth conic $C_{2,H}$ passing throug $p_0$. 
Applying an equivalent argument for a general $x \in \overline{p_0u_i}$ we get a bitangent plane $H:=\langle x,\overline{q_0v_j}\rangle$ and a smooth conic $C_{2,H}$ through $q_0$.
In both cases we obtain cubics passing trough both singular points.

\

We can now come back to the construction of rational quartics on the cubic threefold $X$.
We have just seen that for a general hyperplane $H\subset \D{P}^4$, on the cubic surface $S:=X\cap H$ there always exists a rational cubic $C_3=C_2\cup l$ (spanning $H$) for a smooth conic $C_2$ meeting a line $l$ at a (smooth) point $x_1$. Take now a general point $x_2\in C_2\cap S_{sm}$ (so that a fortiori $x_2\in X_{sm}$). There exists 6 lines on $X$ passing through $x_2$; as on a normal cubic surface with at most RDPs we can not have 6 lines all passing through a smooth point, there always exists a line $l'\subset X$ trough $x_2$ that is not contained in $H$. %$\overline{i} \in \{1, \dots 6\}$ such that $L_{\overline{i}}\not\subset S$ and therefore $L_{\overline{i}}\not\subset \langle S \rangle$%perchè altrimenti conterrebbe un piano. 
The curve $C_4= C_2\cup l \cup l'$ is a non-degenerate rational quartic curve on $X$ that moreover satisfies $\mult_p(C_4)\le 1, \ \forall p\in \Sing(X)$.

\subsubsection*{The singlar case: construction of the scroll $\Sigma$}\label{construction-scroll}
Let $C_4=C_2\cup l\cup l'$ be a non-degenerate rational quartic on $X$ as above. $l$ and $l'$ are two skew lines meeting the conic $C_2$ in two smooth points $x_1$ and $x_2$, respectively.
\begin{comment} 
Let $\Gamma=L\cup L'\cup C_2$ be a rational quartic on $X$, where $L, L'\subset X$ are two skew lines meeting transversely a smooth conic $C_2$ in two points $P:=L\cap C_2$ and $P':=L'\cap C_2$. From the construction of $\Gamma$ previously exhibited, we can assume that $\mult_x \Gamma \le 1, \ \forall x \in \Sing(X)$ and that moreover $P$ and $P'$ are both regular points.  %and $C\subset X$ \DM{are}, respectively, a line and a twisted cubic in $X$ meeting transversely at a point $\bar{p}$.
\end{comment}
We \DM{explain} here how to obtain the scroll $\Sigma$ from $C_4$. 
We fix two points $y_1\in l, \ y_2 \in l'$,\mbox{$ y_i \ne x_i, \ i=1,2$}; we call $D$ the line $D:=\overline{y_1y_2}$.
%We fix a couple of points $p_0 \in C, \ p_1 \in L, \ p_i\ne \bar{p}$ and we call $D$ the line $D=\overline{p_0p_1}$. 
We take then two points $x_3, \ y_3$ with $x_3 \in C_2$ and $y_3\in D$.
We join with a line all the points $z \in D, \ z'\in C_2$ satisfying:
\begin{equation}\label{cross-ratio}
[x_1,x_2,x_3,z']=[y_1,y_2,y_3,z]. 
\end{equation}
The lines obtained in this way form the ruling of a cubic scroll $\Sigma$. In order to see that we are effectively drawing a cubic scroll we observe that having fixed the 3-uples of points $x_1,x_2,x_3,$ and $y_1,y_2,y_3$, we define a isomorphism $\phi:D\to C_2$ such that $\forall z\in D, \ \phi(z)$ is the only point $z'$ on $C_2$ satisfying the cross ratio equation \ref{cross-ratio}. $\Sigma$ is the join of this isomorphism.
%esatto!! questo è dato dal fatto che 2 quadruple di punti su una retta sono proj equiv se e solo se hanno stesso cross ratio
The scroll $\Sigma$ has the line $D$ as directrix; $l$ and $l'$ are lines in the ruling of $\Sigma$ so that $l\sim l'\sim F$ in $\Pic(\Sigma)$.
Computing the intersections, we see that $C_2\sim D+F$ ($C_2$ lies in a plane orthogonal to $D$) so that $C_4 \sim D+3F$ (we have indeed $C_4 \cdot D=2$, given by the two points $y_1, y_2$).
\begin{comment}

Computing the intersections, we see that $L\cap D$ consists just of the point $p_0$ so that $L\sim F$ in $\DM{\Pic\,}(\Sigma)$.  $C\sim D+2F$ intersect $D$ in $p_1$;
the curve $\Gamma$ is hence a quartic in the class $D+3 F$ metting $D$ in $p_0, p_1$. 
In order to see that we are effectively drawing a cubic scroll notice that whenever we take a plane $\Delta\subset \D{P}^4$ orthogonal to $D$, the locus of points $q\in \Delta$ such that:
$$ q=\overline{pp'}\cap \Delta, \ p\in C, \ p' \in D \ \text{satisfying (\ref{cross-ratio})} $$
is simply the image of $C$ under the linear projection from $D$, hence a conic.
\end{comment}

\vspace{2mm}
We can now prove theorem \ref{normal_quintic}.
\begin{proof}[Proof of theorem \ref{normal_quintic}]
	Let $X$ be a normal cubic threefold without triple points. Since we have already proved theorem \ref{smooth_quintic}, we 
	still need to treat the cases where $X$ is singular along a line or a conic pf second type or along a rational quartic. %A Since we have proved that whenever $\dim(\Sing(X))=0$, $X$ contains a smooth normal quintic elliptic, we just need to treat the case where $\dim(\Sing(X))=1$. 
	We consider then $C_4$ a rational quartic on $X$ and $\Sigma$ the cubic scroll built from $C_4$ as illustrated above. By construction, the curve $\C$, residual to $C_4$ in $X\cap \Sigma$, belongs to the linear equivalence class $2D+3F$ in $\Pic(\Sigma)$.
	Applying theorem \ref{AG_anticanonical}, we deduce that $\C$ is an AG curve with $\omega_{\C}\simeq \mathcal{O}_{\C}$ hence an AG quintic curve of arithmetic genus 1. Moreover, from its construction, we know that in $\DM{\Pic\,}(\Sigma)$,  $\C \sim -K_{\Sigma}\sim 2D + 3F$. Consider now the short exact sequence of $\mathcal{O}_{\Sigma}$-modules:
	$$0\longrightarrow \mathcal{O}_{\Sigma}(K_{\Sigma})\longrightarrow \mathcal{O}_{\Sigma}\longrightarrow \mathcal{O}_{\C}\longrightarrow 0.$$
	Taking the long exact sequence in cohomology we get:
	
	$$ 0\longrightarrow H^0(\mathcal{O}_{\Sigma}(K_{\Sigma}))\longrightarrow H^0(\mathcal{O}_{\Sigma})\longrightarrow H^0(\mathcal{O}_{\C})\longrightarrow H^1(\mathcal{O}_{\Sigma}(K_{\Sigma}))\longrightarrow\cdots .$$
	
	As $K_{\Sigma}=-2D-3F$ is not effective, $h^0(\mathcal{O}_{\Sigma}(K_{\Sigma}))=h^0(\mathcal{O}_{\Sigma}(-2D-3F))=0$ and by %($K_{\Sigma}=-2D-3F$ is not effective). By 
	Serre's duality  $h^1(\mathcal{O}_{\Sigma}(K_{\Sigma}))= h^1(\mathcal{O}_{\Sigma})=0$.
	This implies that the restriction map \mbox{$H^0(\mathcal{O}_{\Sigma})\to H^0(\mathcal{O}_{\C})$} is an isomorhism, hence $H^0(\mathcal{O}_{\C})=\D{C}$ and in particular $1\in H^0(\mathcal{O}_{\C})$ is nowhere vanishing. %\mbox{$H^0(\mathcal{O}_{\C})\simeq H^0(\mathcal{O}_{\Sigma})\simeq \D{C}$}
\end{proof}

We can now prove theorem \ref{Ulrich-normal}
\begin{proof}[Proof of Thm. \ref{Ulrich-normal}]
	\begin{comment}

	\begin{prop*}[Prop. \ref{prop_residual}]Again, we can restrict to the case where $X$ is singular either the secant threefold or singular along a line or a conic of second type. Let $X$ be a normal cubic threefold singular either along a rational quartic or along a conic of second type. Then there exists a rank 2 \DM{skew-symmetric} Ulrich bundle $\E \in \DM{\Coh\,}(X)$.
	\end{prop*}
	\begin{proof} 
	\end{comment}
	%Let $X$ be a cubic threefold as in the statement of the proposition. Since we have already showed that there always exists a rank 2 Ulrich bundle on $X$ if ever $\dim(Sing(X))\le 0$, we can focus on the case where $\dim(Sing(X))=1$. 
	
	Again, we can restrict to the case where $X$ is either the secant threefold or singular along a line or a conic of second type.
	\begin{comment}
	Applying  theorem \ref{normal_quintic}, we get the existence of an AG quintic elliptic curve $\C$ on $X$ contained in  $X\cap \Sigma$, where $\Sigma$ \DM{is} the cubic scroll constructed from a rational quartic $C_4\subset X$ as described in the previous section.
	$\C$ is an AG curve of (arithmetic) genus 1 and such that $\omega_{\C}\simeq \mathcal{O}_{\C}$.
	Moreover, from its construction, we know that in $\DM{\Pic\,}(\Sigma)$,  $\C \sim -K_{\Sigma}\sim 2D + 3F$. Consider now the short exact sequence of $\mathcal{O}_{\Sigma}$-modules:
	$$0\longrightarrow \mathcal{O}_{\Sigma}(K_{\Sigma})\longrightarrow \mathcal{O}_{\Sigma}\longrightarrow \mathcal{O}_{\C}\longrightarrow 0.$$
	Taking the long exact sequence in cohomology we get:
	
	$$ 0\longrightarrow H^0(\mathcal{O}_{\Sigma}(K_{\Sigma}))\longrightarrow H^0(\mathcal{O}_{\Sigma})\longrightarrow H^0(\mathcal{O}_{\C})\longrightarrow H^1(\mathcal{O}_{\Sigma}(K_{\Sigma}))\longrightarrow\cdots .$$
	
	$H^0(\mathcal{O}_{\Sigma}(K_{\Sigma}))=H^0(\mathcal{O}_{\Sigma}(-2D-3F))=0$ ($K_{\Sigma}=-2D-3F$ is not effective). By Serre's duality  $h^1(\mathcal{O}_{\Sigma}(K_{\Sigma}))= h^1(\mathcal{O}_{\Sigma})=0$.
	\end{comment}
	By theorem \ref{normal_quintic}, we get the existence of a normal AG elliptic quintic $\C\subset X$. $\C$ satisfies $H^0(\mathcal{O}_{\C})=\D{C}$   %\simeq H^0(\mathcal{O}_{\Sigma})\simeq \D{C}$, therefore 
	and the class $1 \in H^0(\mathcal{O}_{\C})$ does not vanish on any point of $\C$ so that $1_x$ generates \mbox{$\mathcal{O}_{\C,x}\simeq \DM{\Ext}^1(\mathcal{I}_{\mathcal{C},x}, \mathcal{O}_{X,x})$} 
	forall $x \in \C$. 
	Arguing exactly as in the proof of proposition \ref{Ulrich_smooth}, we can assert that, by \ref{pd_1} and \ref{Serre}, the ACM sheaf $\mathcal{N}$ obtained from $\C$ by Serre's correspondence is locally free.
	By the non-degeneracy of $\C$ and by remark \ref{Ulrich_lf} we can conclude that $\E:=\mathcal{N}(2)$ is a skew-symmetric Ulrich bundle.
\end{proof}

\begin{comment}
\begin{cor*}[Cor. \ref{cor_residual}]Let $X$ be a normal cubic threefold singular not containing triple points. Then $X$ is Pfaffian.
\end{cor*}
\begin{proof}From the previous proposition $X$ carries a rank 2 skew symmetric Ulrich bundle, hence $X$ \DM{is} Pfaffian by \ref{skew_Ulrich}.
\end{proof}
\end{comment}

As a direct consequence of theorem \ref{Ulrich-normal} we get:

\begin{thm}\label{normal_Pfaff} Let $X$ be a normal cubic threefold that is not a cone. Then $X$ is Pfaffian.
\end{thm}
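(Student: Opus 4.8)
The plan is to obtain the theorem as an immediate corollary of Theorem~\ref{Ulrich-normal}, after matching the hypotheses. First I would note that, for a cubic threefold, \emph{being a cone} and \emph{containing a triple point} are equivalent: if $X$ is a cone with vertex $v$, choosing coordinates so that $v=[1:0:0:0:0]$ forces the defining form $F$ to be independent of $X_0$, whence $\mult_v(F)=3$; conversely, if $\mult_x(F)=3$ then $F$ lies in the cube of the ideal generated by the four linear forms vanishing at $x$, and since $\deg F=3$ it is a cubic in those four linear forms, so $X$ is a cone with vertex $x$. Hence a normal cubic threefold that is not a cone is exactly a normal (in particular integral) cubic threefold without triple points, and Theorem~\ref{Ulrich-normal} provides a rank~$2$ Ulrich bundle $\E$ on $X$.

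Next I would invoke Remark~\ref{Ulrich_lf}: a rank~$2$ Ulrich \emph{bundle} on $X$ is automatically skew-symmetric. Indeed it carries the tautological isomorphism $\E\xrightarrow{\ \sim\ }\E\dual\otimes\bigwedge^2\E$, $s\mapsto(t\mapsto s\wedge t)$, which is visibly alternating; and since $\det\E\simeq\mathcal{O}_X(2)$ by the short exact sequence~(\ref{twisted}), this becomes a skew-symmetric isomorphism $\phi\colon\E\to\sheafhom(\E,\mathcal{O}_X(2))$. Thus $(\E,\phi)$ is a rank~$2$ skew-symmetric Ulrich sheaf on $X$. Finally, Proposition~\ref{skew_Ulrich} translates the existence of such a pair into the existence of a skew-symmetric matrix $M$ of linear forms on $\D{P}^4$, of size $6\times 6$ (since $2d=6$), with $F^2=\det(M)$. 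For a skew-symmetric matrix of even size one has $\det(M)=\Pf(M)^2$, so $F$ and $\Pf(M)$ agree up to a nonzero scalar and $X=\{\Pf(M)=0\}$; that is, $X$ is Pfaffian.

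The present theorem is therefore a one-line deduction, and the genuine difficulty lies entirely in Theorem~\ref{Ulrich-normal}. That result is proved by first producing a normal AG elliptic quintic $\C\subset X$ (Theorem~\ref{normal_quintic}), then feeding $\C$ into the Serre correspondence of Section~2 to get a rank~$2$ Ulrich sheaf $\E=\mathcal{N}(2)$, and finally checking that $\E$ is locally free: by Propositions~\ref{pd_1} and~\ref{Serre} this amounts to showing that the extension class $1_x$ generates $\Ext^1_{\mathcal{O}_{X,x}}(\mathcal{I}_{\C,x},\mathcal{O}_{X,x})$ at every $x\in\C$, which follows from $H^0(\mathcal{O}_\C)\simeq\D{C}$ together with the fact that $1\in H^0(\mathcal{O}_\C)$ vanishes nowhere on $\C$. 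The main obstacle, in turn, is the construction of $\C$, and it is here that the case analysis enters: the deformation argument of Section~4 (Theorem~\ref{smooth_quintic}) handles $X$ with isolated singularities, or singular along a line or conic of first type, or along three concurrent non-coplanar lines, while the scroll construction of Section~5 handles the remaining normal, non-conical cubics (double line or conic of second type, double rational quartic). Both arguments rely on Segre's classification, recalled in Section~3, to pin down the singularities of a general hyperplane section of $X$.
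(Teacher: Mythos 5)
Your proposal is correct and follows the paper's own route exactly: the theorem is stated there as a direct consequence of Theorem~\ref{Ulrich-normal}, using Remark~\ref{Ulrich_lf} (a rank~$2$ Ulrich bundle is automatically skew-symmetric) and Proposition~\ref{skew_Ulrich} to pass from the Ulrich bundle to a $6\times 6$ skew-symmetric linear matrix with $\Pf(M)=F$. Your preliminary observation that, for a cubic threefold, being a cone is equivalent to having a triple point is also the identification the paper uses implicitly when matching the hypotheses of the two theorems.
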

\

The last sections of the chapter are devoted to the study of Pfaffian representations of the cubic hypersurfaces we still need to consider, namely:
\begin{itemize}
	\item Cubic threefolds that are cones over cubic hypersurfaces in $\D{P}^r, r\le 3$.
	\item \DM{Non-normal} cubic threefolds.
\end{itemize}
%\end{document}
\section{Non normal cubic threefolds and cones}
\subsubsection*{Cubic threefolds presenting triple points} 

\begin{prop}Suppose that $X$ is a cone over a cubic hypersurface in $\D{P}^r, \  r\le 3$.
	Then $X$ is Pfaffian.
\end{prop}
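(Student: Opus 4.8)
The plan is to reduce the statement to the classical fact, recalled in the Introduction, that every cubic hypersurface of dimension at most two admits a Pfaffian representation, and then to observe that a Pfaffian representation of the base of a cone is \emph{automatically} a Pfaffian representation of the cone itself. Concretely, I would write $X\subset\D{P}^4$ as the cone with vertex a linear subspace $\Lambda\simeq\D{P}^{3-r}$ over a cubic hypersurface $X'\subset\D{P}^r$, where $\D{P}^r\subset\D{P}^4$ is a linear subspace disjoint from $\Lambda$, and choose homogeneous coordinates $X_0,\dots,X_4$ so that $\D{P}^r=\{X_{r+1}=\dots=X_4=0\}$ and $\Lambda=\{X_0=\dots=X_r=0\}$. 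Then the defining form $F$ of $X$ involves only the variables $X_0,\dots,X_r$ and coincides, as a polynomial, with the defining form $F'\in\D{C}[X_0,\dots,X_r]_3$ of $X'$.

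The next step is to apply the known result to $X'$. Since $X'$ is a plane cubic (when $r\le 2$) or a cubic surface (when $r=3$), it admits a Pfaffian representation, i.e.\ there is a $6\times 6$ skew-symmetric matrix $M'$ whose entries are linear forms in $X_0,\dots,X_r$ with $\Pf(M')=F'$ (see e.g.\ \cite{Beauville}, \cite{CAG}, \cite{FM}). In the most degenerate cases — in particular for $r=1$, and more generally whenever $F'$ splits into linear factors $\ell_1\ell_2\ell_3$ — one can alternatively exhibit $M'$ by hand, taking it block-diagonal with three $2\times 2$ skew blocks whose off-diagonal entry is $\ell_i$, so that $\Pf(M')=\ell_1\ell_2\ell_3=F'$.

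Finally, I would simply reinterpret $M'$ as a matrix of linear forms on the whole of $\D{P}^4$: its entries are linear forms in $X_0,\dots,X_4$ which happen not to involve $X_{r+1},\dots,X_4$, it is still skew-symmetric of size $6$, and its Pfaffian is unchanged, namely $\Pf(M')=F'=F$. Hence $\{\Pf(M')=0\}=X$, so $M'$ is a Pfaffian representation of $X$ and $X$ is Pfaffian.

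There is no serious obstacle in this argument; once the two-dimensional case is granted, the statement is essentially a triviality, the whole content being that adjoining variables on which the equation does not depend costs nothing. The only point deserving a word of care is that one needs the lower-dimensional statement for \emph{every} plane cubic and \emph{every} cubic surface — including the singular and reducible ones that arise as bases of cones — and this is classical, and in the split case covered by the explicit block construction above.
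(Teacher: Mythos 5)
Your proposal is correct and follows essentially the same route as the paper: reduce to the cubic hypersurface of dimension $r-1\le 2$ defined by the same form $F$ in fewer variables, invoke the classical fact that such cubics are always Pfaffian, and observe that the resulting skew-symmetric matrix also represents the cone. Your added remarks (the explicit block matrix in the split case, and the caveat that the low-dimensional result must cover singular and reducible cubics) only make explicit what the paper leaves implicit.
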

\begin{proof}
	If a cubic threefold $X\subset \D{P}(V)\simeq \D{P}^4$ is a cone, there exists then a vector subspace $U < V^*$ of dimension $r+1 \le 4$ such that $F \in S^3(U)$. $X$ has then multiplicity 3 along the linear space $\D{P}(U^{\perp})\subset X, \ \D{P}(U^{\perp})\simeq \D{P}^{3-r}$. 
	The polynomial $F$ defines an \DM{$(r-1)$-dimensional} cubic hypersurface $Y$ and it is clear that the study of Pfaffian representations of $X$ reduces to the study of Pfaffian representations of $Y$. 
	Since it is well known that a cubic hypersurface of dimension \DM{less than} or equal to \DM{two} is always Pfaffian (see for example \cite{Beauville},  \cite{FM}) $X$ is Pfaffian too. 
\end{proof}

\DM{\subsubsection*{Non-normal cubic threefolds}}

\

We still have to analyse the case where $X$ is not normal, namely when $\DM{\Sing}(X)$ has codimension \DM{less than} 2.
We prove the following:
\begin{prop}\label{non-normal} Let $X$ be a non-normal cubic threefold. Then $X$ is Pfaffian.
\end{prop}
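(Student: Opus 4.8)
The plan is to prove this \emph{directly}, by writing down an explicit $6\times 6$ skew-symmetric matrix of linear forms whose Pfaffian cuts out $X$, as the introduction anticipates. The first step is to pin down the geometry of $X$. Since $X$ is a hypersurface it is Cohen--Macaulay, hence satisfies Serre's condition $S_2$; so the failure of normality forces, via Serre's criterion, the failure of $R_1$, i.e. $\DM{\Sing}(X)$ contains a component $W$ of codimension $1$ in $X$ --- an irreducible surface along which $X$ is singular. At the generic point $\eta_W$ of $W$ the ring $\mathcal{O}_{X,\eta_W}=\mathcal{O}_{\D{P}^4,\eta_W}/(F)$ is a non-regular quotient of a $2$-dimensional regular local ring, so $F$ vanishes to order $\ge 2$ along $W$; intersecting $X$ with a general line $L\subset\D{P}^4$, the non-zero cubic $F\restriction_L$ then vanishes to order $\ge 2$ at each of the $\DM{\deg\,}(W)$ points of $L\cap W$, so $2\DM{\deg\,}(W)\le 3$ and $\DM{\deg\,}(W)=1$. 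Thus $W$ is a plane $\Pi\simeq\D{P}^2$ --- which is of course also part of Segre's classification recalled in Section~3. Choosing coordinates with $\Pi=\{X_0=X_1=0\}$, the condition that $F$ have multiplicity $\ge 2$ along $\Pi$ becomes $F\in(X_0,X_1)^2$, so
$$F=X_0^2L_0+X_0X_1M_0+X_1^2N_0$$
for suitable linear forms $L_0,M_0,N_0$.

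With this normal form in hand, I would first exhibit $F$ as a $3\times 3$ determinant of linear forms: setting
$$A=\begin{pmatrix}X_0 & X_1 & 0\\ 0 & X_0 & X_1\\ N_0 & -M_0 & L_0\end{pmatrix},$$
a cofactor expansion along the first row gives $\DM{\det}(A)=X_0^2L_0+X_0X_1M_0+X_1^2N_0=F$. Then I would pass to the $6\times 6$ matrix
$$M_X=\begin{pmatrix}0 & A\\ -A^{T} & 0\end{pmatrix},$$
which is skew-symmetric, has linear entries, and satisfies $\DM{\Pf}(M_X)=(-1)^{3}\DM{\det}(A)=-F$ by the standard formula for Pfaffians of block matrices of this shape (equivalently $\DM{\Pf}(M_X)^2=\DM{\det}(M_X)=F^2$). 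Hence $X=\{\DM{\Pf}(M_X)=0\}$, so $M_X$ is a Pfaffian representation of $X$; if one insists on $\DM{\Pf}(M_X)=F$ on the nose it suffices to change the sign of one row of $A$.

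I do not expect a genuine obstacle here: once the normal form $F=X_0^2L_0+X_0X_1M_0+X_1^2N_0$ is available, the conclusion is the short computation above. The only point that needs an argument is the reduction to that normal form --- i.e. that a non-normal cubic threefold is singular along a plane --- which I would settle either by the line-section degree count sketched above or simply by quoting Segre's classification. Degenerate cases cause no trouble: for instance if $F$ has a squared linear factor $\ell^2$, then after setting $\ell=X_0$ one has $F=X_0^2L_0$, which is just the case $M_0=N_0=0$ of the normal form, and the same matrices apply.
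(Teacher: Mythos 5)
Your explicit construction for the case of a cubic singular along a plane is correct: $\det(A)=X_0^2L_0+X_0X_1M_0+X_1^2N_0$ checks out, and the doubled matrix $\bigl(\begin{smallmatrix}0&A\\-A^T&0\end{smallmatrix}\bigr)$ has Pfaffian $\pm\det(A)$, so it gives a Pfaffian representation whenever $F\in I_\Pi^2$ for a plane $\Pi$. This matches, in a cleaner packaged form, the first half of the paper's proof (the paper instead writes the normal form $F=X_0X_3^2+X_1X_4^2+X_2X_3X_4$ and exhibits a $6\times 6$ matrix directly).

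The gap is in the reduction to that normal form. First, the degree count is not valid as stated: a surface $W\subset\D{P}^4$ has codimension $2$, so a \emph{general line} misses it entirely; the bound ``$2\deg(W)\le 3$'' therefore does not follow. The correct argument takes \emph{secant} lines of $W$: for $p\ne q\in W$ the restriction $F\restriction_{\overline{pq}}$ has two double roots, hence vanishes, so $X$ contains the secant variety of $W$; this yields a genuine dichotomy --- either $W$ is a plane, or $X$ contains a hyperplane and is non-integral --- not the single conclusion $\deg(W)=1$. Second, and more seriously, the conclusion is simply false for reducible cubics: if $F=l\cdot Q$ with $Q$ an irreducible quadric of rank $\ge 3$ (e.g.\ $F=X_0(X_0X_1+X_2X_3+X_4^2)$), then $X$ is non-normal (reducible), its divisorial singular locus is the degree-two quadric surface $\{l=Q=0\}$, and $F$ lies in $I_\Pi^2$ for \emph{no} plane $\Pi$: that would require either $\Pi\subset\{l=0\}\cap\{Q=0\}$ (impossible for $\rk(Q)=5$, since a smooth quadric threefold contains no planes) or $Q\in I_\Pi^2$ (forcing $\rk(Q)\le 2$). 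So this entire family of non-normal cubics escapes your normal form, and your closing remark about degenerate cases only addresses $F=\ell^2L_0$, not $F=\ell Q$ with $Q$ irreducible. The paper handles this missing case separately: it takes a $4\times 4$ skew-symmetric matrix $M_Q$ of linear forms with $\Pf(M_Q)=Q$ and forms the block-diagonal $6\times 6$ matrix with second block $\bigl(\begin{smallmatrix}0&l\\-l&0\end{smallmatrix}\bigr)$, whose Pfaffian is $l\cdot Q=F$. Adding that second construction (and repairing the reduction via secant lines) would complete your proof.
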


Whenever $\DM{\codim}(\DM{\Sing}(X))\le 1$, one of the following occurs:
\begin{itemize}
	\item $\DM{\codim}(\DM{\Sing}(X))=1$, hence $\DM{\Sing}(X)$ contains a surface $S$. From the fact that $X$ contains all the lines generated by points in $S$ we deduce that $S$ must be a plane.
	\item $\DM{\codim}(\DM{\Sing}(X))=0$, hence $X$ is a non-integral cubic threefold, given by the union of a 3-plane $H$ and of a quadric hypersurface $Q, \ 1\le \DM{\rk}(Q)\le 5$.  
\end{itemize}
For each case we will write explicitely a skew-symmetric matrix of linear forms whose Pfaffian defines the threefold we are considering. 
\paragraph{Cubic threefolds singular along a plane}

\

Suppose that $X$ contains a double plane $\Delta$ supported on $\{X_3=0, X_4=0\}$. 
Up to an appropriate change of coordinates on $\D{P}^4$, the polynomial $F$ has the form:

$$F(X_0,\dots X_4)= X_0 X_3^2 + X_1 X_4^2 + X_2X_3X_4.$$

The skew-symmetric matrix $M$ defined as:
\begin{equation*}M=
\begin{pmatrix} 0 & X_3 & X_4& 0&0& X_2\\
-X_3& 0&0&0&X_4&0\\
-X_4&0&0&X_3&0&0\\
0&0&-X_3&0&0& X_1\\
0&-X_4&0&0&0&X_0\\
-X_2&0&0&-X_1&-X_0&0
\end{pmatrix} 
\end{equation*}

satisfies $\DM{\Pf}(M)=F$ providing then a Pfaffian representation of $X$. 
\paragraph{Non-integral cubic threefolds}

\

Whenever $X:=\{F=0\}$ is not integral, the polynomial $F$ factors as $$F(X_0,\dots ,X_4)=l(X_0, \dots X_4)Q(X_0,\dots X_4),$$ \DM{where $l$ is} a linear form and $Q$ \DM{is} a quadratic form of rank less than or equal to five.
It is well know that a quadric $Q$ of rank $\DM{\rk}(Q)\le 5$ is Pfaffian, so that we can find a $4\times 4$ skew-symmetric matrix of linear forms $M_Q$ such that $\DM{\Pf}(M_Q)=Q(X_0,\dots X_4)$ \DM{(such} a matrix can be easily written down \DM{explicitly}, otherwise we refer to \cite{Beauville}\DM{).}
We construct then $M$, a $6\times 6$ \DM{skew-symmetric} matrix of linear forms such that $\DM{\Pf}(M)=F$ as follows:

\begin{equation*}
M=\left(
\begin{array}{cr}
M_Q & \begin{matrix} 0 & 0 \\ 0&0\\0&0\\ 0&0 \end{matrix} \\
\begin{matrix} 0&0&0&0\\ 0&0&0&0 \end{matrix} & \begin{matrix} \hspace{1mm} 0 & l \\ -l& 0 \end{matrix}
\end{array}
\right)
\end{equation*}
This complete the proof of proposition \ref{non-normal} and leads us, finally, to our main result:
\begin{thm}\label{teoremone_Pf} A cubic threefold $X\subset \D{P}^4$ always admits a Pfaffian representation.
\end{thm}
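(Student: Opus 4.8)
The statement is a clean synthesis of everything assembled in the excerpt, so the proof is essentially a case distinction according to the geometry of $\Sing(X)$, exactly matching the trichotomy announced in the introduction. The plan is to fix a cubic threefold $X\subset\D{P}^4=\D{P}(V)$ defined by $F\in S^3(V^*)$ and split into three mutually exhaustive situations: (i) $X$ is a cone over a cubic hypersurface in $\D{P}^r$ with $r\le 3$ (equivalently $X$ has a triple point, equivalently $F\in S^3(U)$ for some proper subspace $U<V^*$); (ii) $X$ is not normal, i.e. $\codim(\Sing(X))\le 1$; (iii) $X$ is normal and not a cone, which by Segre's classification means $X$ has at most double points and $\dim\Sing(X)\le 1$.

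\textbf{Cases (i) and (ii): the easy cases.} If $X$ is a cone, then by the Proposition on cones in Section~6, $F\in S^3(U)$ with $\dim U=r+1\le 4$, and $F$ already defines a cubic hypersurface $Y\subset\D{P}^{r-1}$ of dimension $\le 2$; since plane cubics and cubic surfaces are always Pfaffian (\cite{Beauville},\cite{FM}), lifting a Pfaffian matrix for $Y$ (enlarging by trivial rows/columns if $r<4$) gives one for $X$. If $X$ is non-normal, then either $\Sing(X)$ contains a plane, in which case (after normalizing coordinates so the plane is $\{X_3=X_4=0\}$ and $F=X_0X_3^2+X_1X_4^2+X_2X_3X_4$) the explicit $6\times 6$ skew matrix $M$ displayed in Section~6 has $\Pf(M)=F$; or $X=\{l\cdot Q=0\}$ is reducible, and one builds $M$ as the block sum of a $4\times 4$ Pfaffian matrix $M_Q$ for the quadric $Q$ (any quadric of rank $\le 5$ is Pfaffian) with the $2\times 2$ skew block $\left(\begin{smallmatrix}0&l\\-l&0\end{smallmatrix}\right)$, so $\Pf(M)=\Pf(M_Q)\cdot l=Q\cdot l=F$. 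Both of these are immediate from the propositions already proved in Section~6.

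\textbf{Case (iii): the substantial case.} Here I would invoke Theorem~\ref{normal_Pfaff} (equivalently Theorem~\ref{Ulrich-normal}): a normal cubic threefold that is not a cone carries a rank~$2$ skew-symmetric Ulrich bundle $\E$, hence by Proposition~\ref{skew_Ulrich} admits a $6\times6$ skew-symmetric linear matrix $M$ with $\Pf(M)^2=\det(M)=F^2$, so $\Pf(M)=F$ up to scalar. The whole weight of the argument is that Theorem~\ref{Ulrich-normal}, in turn, rests on Theorem~\ref{normal_quintic} (existence of a normal AG elliptic quintic $\C\subset X$), which is proved by the two methods of Sections~4 and~5 — a deformation argument when $\Sing(X)$ is empty, isolated, or a curve of first type, and the scroll/residual construction when $\Sing(X)$ is a line or conic of second type or a rational quartic — together with the Serre-correspondence machinery of Section~2 showing the resulting sheaf $\E$ is locally free (Propositions~\ref{pd_1}, \ref{Serre}, \ref{nondeg_Ulrich}, Remark~\ref{Ulrich_lf}). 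Since the smooth case is already known (\cite{Dima-Tikho},\cite{B1},\cite{Beauville}) and is subsumed under "isolated singularities" (the empty case), combining the three cases exhausts all cubic threefolds.

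\textbf{Main obstacle.} Within this plan the genuinely hard part is entirely inside Case~(iii), and more precisely inside Theorem~\ref{normal_quintic}: producing a \emph{nondegenerate} AG elliptic quintic on every singular normal $X$. For the deformation method the delicate point is checking $h^1(\mathcal N_{\C_0/X})=0$ so that $\Hilb^{5n}_X$ is smooth of dimension $10$ at $[\C_0]$ while the locus of hyperplane-contained quintics has dimension only $9$ — this forces $\C_0$ to deform off the hyperplane — and this requires $\C_0$ to avoid $\Sing(S)$, which is exactly what Proposition~\ref{quintic-disjoint} and Claim~\ref{claim-ort} secure via the $(-1)$/$(-2)$-curve combinatorics on the weak Del Pezzo resolution. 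For the constructive method the subtle points are the existence of a suitable reducible rational quartic $C_4=C_2\cup l\cup l'$ with controlled multiplicities along $\Sing(X)$ (handled via the bitangent-plane analysis of lines on the singular cubic surfaces of types $A_2$, $2A_2$, $4A_1$) and the verification that the residual curve $\C\sim -K_\Sigma$ on the cubic scroll is AG with $H^0(\mathcal O_\C)=\D{C}$ (Theorem~\ref{AG_anticanonical} plus the cohomology computation on $\Sigma$). Everything else — the case split, the cone case, the non-normal case, and the final passage from "skew-symmetric Ulrich bundle" to "Pfaffian" — is routine given the results already in place.
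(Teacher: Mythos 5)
Your proposal is correct and follows essentially the same route as the paper: the identical trichotomy (cones, non-normal cubics, normal non-cones), the same explicit matrices for the first two cases, and the same reduction of the substantial case to Theorem \ref{normal_Pfaff} via Theorems \ref{normal_quintic} and \ref{Ulrich-normal} and Proposition \ref{skew_Ulrich}. Your identification of where the real difficulty lies (the existence of a nondegenerate AG elliptic quintic, secured by the deformation and scroll-residual methods) also matches the paper's own emphasis.
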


\end{document}